\newtheorem{thm}{Theorem}[section]
\newtheorem{lemma}[thm]{Lemma}
\newtheorem{theorem}[thm]{Theorem}
\newtheorem{proposition}[thm]{Proposition}
\newtheorem{corollary}[thm]{Corollary}
\newtheorem*{prop*}{Proposition}
\newtheorem*{lemma*}{Lemma}
\theoremstyle{definition}
\newtheorem{example}[thm]{Example}
\theoremstyle{remark}
\newtheorem{remark}[thm]{Remark}
\numberwithin{equation}{section}
\DeclareMathOperator{\Spec}{Spec}
\DeclareMathOperator{\Ker}{Ker}
\DeclareMathOperator{\pd}{pd}
\DeclareMathOperator{\lcm}{lcm}
\DeclareMathOperator{\LT}{LT_<}
\newcommand{\A}{\mathbb{A}}
\newcommand{\Z}{\mathbb{Z}}
\newcommand{\kk}{\mathbf{k}}
\newcommand{\ap}{\alpha^\prime}
\newcommand{\app}{\alpha^{\prime\prime}}
\author{Yuzhe Bai}
\address{Department of Mathematics\\University of California, Davis\\
One Shields Avenue\\95616 Davis, CA}
\email{yzbai@ucdavis.edu}
\author{Eugene Gorsky}
\address{Department of Mathematics\\University of California, Davis\\
One Shields Avenue\\95616 Davis, CA}
\address{International Laboratory of Representation Theory and Mathematical Physics\\ National Research University Higher School of Economics\\ Moscow, Russia}
\email{egorskiy@math.ucdavis.edu}
\author{Oscar Kivinen}
\address{Department of Mathematics\\University of California, Davis\\
One Shields Avenue\\95616 Davis, CA}
\email{kivineo1@math.ucdavis.edu}
\title{Quadratic ideals and Rogers-Ramanujan recursions}
\begin{document}

\begin{abstract}
We give an explicit recursive description of the Hilbert series and Gr\"obner bases for the family of quadratic ideals 
defining the jet schemes of a double point. We relate these recursions to the Rogers-Ramanujan identity and prove 
a conjecture of the second author, Oblomkov and Rasmussen.
\end{abstract}

\maketitle

\section{Introduction}

In this paper, we study a family of quadratic ideals defining the jet schemes for the double point $D=\Spec \kk[x]/x^2$. Here $\kk$ is a field of characteristic zero. 
Recall that the $(n-1)$-jet scheme of $X$ is defined  as the space of formal maps $\Spec \kk[t]/t^n\to X$ \cite{I}. In the case of the
double point, such a formal map is defined by a polynomial $$x(t)=x_0+x_1t+\cdots+x_{n-1}t^{n-1},$$ such that
$x(t)^2\equiv 0 \mod t^n$. By expanding this equation, we get a system of equations 
\[
f_1=x_0^2, f_2=2x_0x_1,\ldots, f_{n}=\sum_{i=0}^{n-1}x_{i}x_{n-1-i}.
\]
We denote the defining ideal of $\text{Jet}^{n-1}D\subseteq \A^n$ by 
$$
I_n:=\langle f_1,\ldots, f_n\rangle\subseteq R_n:=\kk[x_0,\ldots,x_{n-1}].
$$ 
The ring $R_n$ is $\Z_{\geq 0}^2$-graded by assigning the grading $(i,1)$ to $x_i$. It is then clear that the ideal $I_n$ is bihomogeneous.
Let 
$$
H_n(q,t)=\sum_{i,j\geq 0}\dim_k(R_n/I_n)_{i,j}q^it^j\in \Z[[q,t]]
$$ 
denote the bigraded Hilbert series for $R_n/I_n$.  Our first main result is the following.

\begin{theorem}
\label{th: intro H recursion}
The series $H_n(q,t)$ satisfies the recursion relation
$$
H_n(q,t)=\frac{H_{n-2}(q,qt)+tH_{n-3}(q,q^2t)}{1-q^{n-1}t}
$$
with initial conditions
$$
H_0(q,t)=1,\ H_1(q,t)=1+t,\ H_2(q,t)=\frac{1}{1 - qt} + t.
$$
\end{theorem}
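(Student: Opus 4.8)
The plan is to peel off the top variable $x_{n-1}$ and then the bottom variable $x_0$, reducing $R_n/I_n$ to copies of $R_{n-2}/I_{n-2}$ and $R_{n-3}/I_{n-3}$ with a rescaled $t$-grading. Two structural facts drive the argument. First, among $f_1,\dots,f_n$ only $f_n$ involves $x_{n-1}$, and
\[
f_n=2x_0x_{n-1}+h_n,\qquad h_n:=\sum_{i=1}^{n-2}x_ix_{n-1-i}\in\kk[x_1,\dots,x_{n-2}].
\]
Second, $f_1=x_0^2$ and $f_2=2x_0x_1$ force $x_0^2=x_0x_1=0$ in $R_n/I_n$, using $\operatorname{char}\kk=0$.

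\emph{Step 1: remove $x_{n-1}$.} The input is that multiplication by $x_{n-1}$ is injective on $R_n/I_n$. Since $\deg x_{n-1}=(n-1,1)$, this gives the short exact sequence
\[
0\longrightarrow(R_n/I_n)(-(n-1),-1)\xrightarrow{\,\cdot x_{n-1}\,}R_n/I_n\longrightarrow R_n/(I_n+(x_{n-1}))\longrightarrow0,
\]
and hence $(1-q^{n-1}t)\,H_n(q,t)=H\bigl(R_n/(I_n+(x_{n-1}))\bigr)$. Setting $x_{n-1}=0$ leaves $f_1,\dots,f_{n-1}$ unchanged and sends $f_n$ to $h_n$, so $R_n/(I_n+(x_{n-1}))\cong M:=R_{n-1}/(I_{n-1}+(h_n))$, now with $h_n$ regarded inside $R_{n-1}=\kk[x_0,\dots,x_{n-2}]$.

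\emph{Step 2: remove $x_0$ from $M$.} Because $x_0^2,x_0x_1\in I_{n-1}$, there is an exact sequence $0\to x_0M\to M\to M/x_0M\to0$. Setting $x_0=0$ kills $f_1,f_2$, and after relabelling $x_{j+1}\mapsto x_j$ it carries $f_3,\dots,f_{n-1},h_n$ onto the $n-2$ generators $f_1,\dots,f_{n-2}$ of $I_{n-2}\subseteq R_{n-2}$; since this relabelling sends the class of a bidegree $(a,d)$ monomial to one of bidegree $(a+d,d)$, we get $H(M/x_0M)=H_{n-2}(q,qt)$. For the submodule we use $x_0M\cong\bigl(M/\operatorname{Ann}_M(x_0)\bigr)(0,-1)$, and here the crux is the identity
\[
\operatorname{Ann}_M(x_0)=\bigl(x_0,\,x_1,\,w\bigr),\qquad w:=\sum_{i=2}^{n-2}x_ix_{n-i}.
\]
Granting it, setting $x_0=x_1=0$ and relabelling $x_{j+2}\mapsto x_j$ carries $f_5,\dots,f_{n-1},h_n$ together with $w$ onto the $n-3$ generators $f_1,\dots,f_{n-3}$ of $I_{n-3}\subseteq R_{n-3}$, with $w$ supplying the single generator $f_{n-3}$ not produced by the $f_i$ and $h_n$. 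This relabelling sends a bidegree $(a,d)$ class to one of bidegree $(a+2d,d)$, which together with the $(0,1)$-shift from multiplication by $x_0$ gives $H(x_0M)=t\,H_{n-3}(q,q^2t)$. Adding the two contributions, $H(M)=H_{n-2}(q,qt)+t\,H_{n-3}(q,q^2t)$, and combining with Step 1 yields the recursion; the values $H_0,H_1,H_2$ are read off directly from the monomial ideals $I_0=0$, $I_1=(x_0^2)$, $I_2=(x_0^2,2x_0x_1)$.

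\emph{Main obstacle.} The real content is the two algebraic inputs above: that $x_{n-1}$ is a non-zerodivisor on $R_n/I_n$ (Step 1), and the exact description of $\operatorname{Ann}_M(x_0)$ (Step 2). Neither is formal; in particular $h_n$ itself is \emph{not} a non-zerodivisor on $R_{n-1}/I_{n-1}$ in general --- already $h_3=x_1^2$ annihilates $x_0$ in $R_2/I_2$ --- so the injectivity of $x_{n-1}$ cannot be extracted by a naive division argument. I expect to establish both statements, together with the (presumably stronger) assertion that $R_n/I_n$ is free over $\kk[x_{n-1}]$, by an induction on $n$ carried out simultaneously with the Hilbert-series recursion, using the explicit recursive description of the Gr\"obner basis of $I_n$ that the abstract advertises.
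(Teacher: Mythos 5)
Your decomposition is genuinely different from the paper's. The paper works directly with the short exact sequence
\[
0 \to x_0R_n/(I_n \cap x_0R_n) \to R_n/I_n \to R_n/(I_n + x_0R_n) \to 0
\]
and identifies the two outer terms with shifted copies of $R_{n-3}/I_{n-3}$ and $R_{n-2}/I_{n-2}$ over the polynomial ring $\kk[x_{n-1}]$; it never needs to argue that $x_{n-1}$ is a non-zerodivisor. You instead first strip off $x_{n-1}$ (your Step 1) and then split the reduced module $M$ by $x_0$; Step 2 is essentially the $x_{n-1}=0$ shadow of the paper's exact sequence, and Step 1 is what is needed to lift the Hilbert-series count back up. So your route requires everything the paper's does plus one extra fact.

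That said, the proposal is not yet a proof: the two facts you flag as ``the real content'' are exactly the content, and they are left open. For (a), in grevlex (where $x_{n-1}$ is the smallest variable) the non-zerodivisor claim is equivalent to $x_{n-1}$ not dividing any minimal generator of $\LT(I_n)$, which is what Theorem \ref{th: reduced gb} asserts; but the paper proves the Gr\"obner-basis results \emph{from} the same ideal-decomposition lemmas you would be trying to establish, so an induction ``simultaneously with the Hilbert-series recursion, using the Gr\"obner basis'' is in danger of circularity. One can also extract (a) from Lemma \ref{lem: In plus x0} together with Corollary \ref{cor: In cap x0} (both outer terms of the exact sequence are free over $\kk[x_{n-1}]$, hence so is $R_n/I_n$), but those two results already give the recursion directly, making Step 1 superfluous. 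For (b), note $w=S^2(f_{n-3})$; the identity $\operatorname{Ann}_M(x_0)=(x_0,x_1,w)$ is the $x_{n-1}=0$ shadow of the paper's key lemma $I_n\cap x_0R_n = x_0S^2(I_{n-3})[x_{n-1}]+\langle f_1,f_2\rangle$, whose proof rests on the full description of the first syzygy module (Theorem \ref{th:syz}) via the delicate inductive Lemma \ref{lem:divisible}. Nothing in your outline replaces that analysis. In particular, since $x_0$ is the \emph{largest} grevlex variable, the Bayer--Stillman colon formula that reads off $J:x$ from $\LT(J)$ applies only to the smallest variable and will not hand you $\operatorname{Ann}_M(x_0)$ from a Gr\"obner basis; you would still need to prove both the containment $x_0w\in I_{n-1}+(h_n)$ and, much harder, the reverse inclusion $\operatorname{Ann}_M(x_0)\subseteq(x_0,x_1,w)$, and the latter is precisely what the syzygy induction is for.
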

Using this recursion relation, we obtain explicit combinatorial formulas for $H_n(q,t)$:

\begin{theorem}
The Hilbert series $H_n(q,t)$ is given by the following explicit formula:
$$
H_n(q,t)=\sum_{p=0}^{\infty}\frac{\binom{h(n,p)+1}{p}_q\cdot q^{p(p-1)}t^p}{(1-q^{n-h(n,p)}t)\cdots (1-q^{n-1}t)},
$$
where $h(n,p)=\lfloor\frac{n-p}{2}\rfloor$. 
\end{theorem}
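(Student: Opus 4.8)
The plan is to prove the formula by strong induction on $n$, using the recursion of Theorem~\ref{th: intro H recursion}: that recursion determines $H_n$ from $H_{n-2},H_{n-3}$ together with the initial data $H_0,H_1,H_2$, so it suffices to show that the proposed right-hand side, which I will call $G_n(q,t)$, satisfies the same recursion and the same initial conditions. Observe first that $\binom{h(n,p)+1}{p}_q$ vanishes once $p>\lfloor\tfrac{n-p}{2}\rfloor+1$, so $G_n$ is a finite sum of rational functions whose denominators are products of factors $1-q^jt$; these have constant term $1$, hence $G_n\in\Z[[q,t]]$ and every manipulation below is legitimate termwise. I would first record the base cases: for $n=0,1,2$ only the summands with $p\le 2$ are potentially nonzero, the $p=2$ summand vanishes because $\binom{1}{2}_q=0$, and a direct computation gives $G_0=1$, $G_1=1+t$, $G_2=\tfrac{1}{1-qt}+t$. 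It then remains to prove, for $n\ge 3$,
$$
(1-q^{n-1}t)\,G_n(q,t)=G_{n-2}(q,qt)+t\,G_{n-3}(q,q^2t),
$$
after which the induction hypothesis $H_{n-2}=G_{n-2}$, $H_{n-3}=G_{n-3}$ and Theorem~\ref{th: intro H recursion} yield $H_n=G_n$.

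The combinatorial engine of the proof is the pair of elementary identities
$$
h(n-2,p)=h(n-3,p-1)=h(n,p)-1,
$$
each an instance of $\lfloor\tfrac{a-2}{2}\rfloor=\lfloor\tfrac a2\rfloor-1$. Write $m:=h(n,p)$ and compare the two sides of the recursion summand by summand in $p$. On the left, multiplying the $p$-th summand of $G_n$ by $1-q^{n-1}t$ cancels the last denominator factor, leaving numerator $\binom{m+1}{p}_q\,q^{p(p-1)}t^p$ over $\prod_{j=n-m}^{n-2}(1-q^jt)$. On the right, replacing $t$ by $q^ct$ turns $q^{p(p-1)}t^p$ into $q^{p(p-1)+cp}t^p$ and each factor $1-q^jt$ into $1-q^{j+c}t$; carrying this out with $c=1$ for the $G_{n-2}$ term, and with $c=2$ together with the reindexing $p\mapsto p-1$ for the $t\,G_{n-3}$ term, one checks (this is where the two $h$-identities enter) that both contributions to the $p$-th summand acquire exactly the denominator $\prod_{j=n-m}^{n-2}(1-q^jt)$, with numerators $\binom{m}{p}_q\,q^{p^2}t^p$ and $\binom{m}{p-1}_q\,q^{p^2-p}t^p$ respectively. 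Dividing out the common factor $q^{p(p-1)}t^p=q^{p^2-p}t^p$, the identity reduces, for each $p$, to the $q$-Pascal relation
$$
\binom{m+1}{p}_q=q^{p}\binom{m}{p}_q+\binom{m}{p-1}_q.
$$

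The main obstacle here is bookkeeping rather than depth: one must confirm that the exponent ranges appearing in the three denominator products genuinely coincide after the two substitutions and the index shift, and one must treat the boundary summands where $m=h(n,p)$ is small. When $m\ge 1$ the reduction above applies verbatim (for $m=1$ the product $\prod_{j=n-m}^{n-2}(1-q^jt)$ is empty, which is harmless). When $m\le 0$ --- which for $n\ge 3$ forces $p\ge n-1\ge 2$ --- all three of $\binom{m+1}{p}_q$, $\binom{m}{p}_q$, $\binom{m}{p-1}_q$ vanish, so neither side of the identity receives a contribution from that $p$; this is the only place a degenerate "product of negative length" could otherwise intrude. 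Assembling the base cases, the termwise reduction to $q$-Pascal, and the strong induction then completes the proof.
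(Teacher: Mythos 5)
Your proof is correct and follows essentially the same route as the paper's: verify the recursion of Theorem~\ref{th: intro H recursion} summand-by-summand in $p$, using the two identities $h(n-2,p)=h(n-3,p-1)=h(n,p)-1$ to align denominators and then reduce to the $q$-Pascal relation $\binom{m+1}{p}_q=q^p\binom{m}{p}_q+\binom{m}{p-1}_q$. Your added attention to the boundary cases (finiteness of the sum and the vanishing when $h(n,p)\le 0$) is a bit more careful than the paper's, but the core argument is identical.
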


In the limit $n\to \infty$, we reprove the theorem of Bruschek, Mourtada and Schepers \cite{BMS}, which relates the Hilbert series of the arc space for the double point to the Rogers-Ramanujan identity. In fact, we refine their result by considering an additional grading, see equation \eqref{infinity fermionic} .  Similar results for $n=\infty$ were obtained by Feigin-Stoyanovsky \cite{F,FS}, Lepowsky et al. \cite{CLM,CLM2}, and the second author, Oblomkov and Rasmussen in \cite{GOR}.

Although our approach to the computation of the Hilbert series is inspired by \cite{BMS}, it is quite different. The key result in \cite{BMS} shows that for $n=\infty$ the polynomials $f_k$ form a Gr\"obner basis of the ideal $I_{\infty}$. As we will see below, the Gr\"obner basis of 
the ideal $I_n$ for finite $n$ is larger and has a very subtle recursive structure.  We completely describe such a basis in Theorems \ref{th:gb induction} and \ref{th: reduced gb}. In particular, we prove the following.

\begin{theorem}
Let $k>2$. Then the reduced Gr\"obner basis for $I_n$ contains $\binom{\lfloor\frac{n-k+1}{2}\rfloor}{k-2}$ polynomials of  degree $k$.
\end{theorem}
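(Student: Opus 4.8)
The plan is to count, for each fixed degree $k > 2$, the number of degree-$k$ elements in the reduced Gröbner basis of $I_n$ by extracting this data from the recursive description of the Gröbner basis provided by Theorems \ref{th:gb induction} and \ref{th: reduced gb}. The natural approach is to introduce a generating function (or simply a doubly-indexed array) $g_k(n)$ recording the number of degree-$k$ generators in the reduced Gröbner basis of $I_n$, translate the recursion on Gröbner bases into a recursion on the numbers $g_k(n)$, and then verify that the claimed closed form $\binom{\lfloor (n-k+1)/2\rfloor}{k-2}$ satisfies the same recursion and initial conditions. Since the recursion for the Gröbner basis presumably relates $I_n$ to $I_{n-2}$ and $I_{n-3}$ (mirroring the Hilbert series recursion in Theorem \ref{th: intro H recursion}), I expect $g_k(n)$ to satisfy something like $g_k(n) = g_k(n-2) + g_{k-1}(n-3) + (\text{correction terms from new overlaps})$, and the work is in pinning down exactly which new S-polynomials survive reduction at each step.

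First I would carefully read off from Theorem \ref{th: reduced gb} the explicit shape of the reduced Gröbner basis: which monomial orders are used, what the leading terms of the degree-$k$ elements look like, and how the degree-$k$ stratum of the basis for $I_n$ is built from lower-$n$ data together with genuinely new elements. Then I would set up the count: for each $k$, partition the degree-$k$ basis elements of $I_n$ according to the recursive construction — those "inherited" (after the substitution $t \mapsto qt$ shifts in the construction) from $I_{n-2}$, those coming with an extra factor from $I_{n-3}$, and any new top-degree elements forced by the division algorithm at step $n$. Matching this partition against Pascal-type identities for binomial coefficients, namely $\binom{m}{k-2} = \binom{m-1}{k-2} + \binom{m-1}{k-3}$ with $m = \lfloor (n-k+1)/2 \rfloor$, should reproduce the recursion, once one checks the parity bookkeeping: $\lfloor (n-k+1)/2 \rfloor$ versus $\lfloor ((n-2)-k+1)/2 \rfloor = m-1$ and $\lfloor ((n-3)-(k-1)+1)/2 \rfloor = \lfloor (n-k-1)/2 \rfloor$, which is $m-1$ regardless of the parity of $n-k$. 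This alignment is the crux of why the binomial formula appears.

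The main obstacle I anticipate is not the algebra of binomial coefficients but rather establishing that \emph{no unexpected reductions or cancellations occur} — i.e., that every element predicted by the naive recursive count actually appears in the \emph{reduced} Gröbner basis and is not itself reducible by the others, and conversely that no additional degree-$k$ elements sneak in. This requires understanding precisely which S-pair overlaps among the $f_i$ (and among the previously added basis elements) produce new degree-$k$ leading terms that are not already divisible by existing leading terms. If Theorem \ref{th: reduced gb} already gives a clean combinatorial labeling of the reduced basis (e.g. by certain monomials or integer partitions subject to a gap condition à la Rogers–Ramanujan), then this step reduces to a direct count of such labels with a prescribed degree $k$, and the binomial coefficient $\binom{\lfloor (n-k+1)/2\rfloor}{k-2}$ emerges as the number of ways to choose $k-2$ "gaps" among $\lfloor (n-k+1)/2\rfloor$ available positions — a standard stars-and-bars / lattice-path count.

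Concretely, then, I would structure the proof as: (1) recall from Theorem \ref{th: reduced gb} the combinatorial index set $\mathcal{B}_n$ for the reduced Gröbner basis together with the degree function on it; (2) show the degree-$k$ part of $\mathcal{B}_n$ is in bijection with $(k-2)$-element subsets of $\{1, \dots, \lfloor (n-k+1)/2 \rfloor\}$ satisfying no extra condition (or a condition that is automatically vacuous), perhaps by exhibiting the bijection explicitly on leading monomials; (3) conclude by the standard count $|\binom{[\lfloor (n-k+1)/2\rfloor]}{k-2}| = \binom{\lfloor (n-k+1)/2\rfloor}{k-2}$. As a consistency check I would verify small cases by hand — e.g. $k = 3$ should give $\lfloor (n-2)/2 \rfloor$ cubic generators, and for $n$ where $\lfloor (n-k+1)/2\rfloor < k-2$ the count is correctly zero, matching the fact that the Gröbner basis stabilizes in each fixed degree as $n \to \infty$ only up to the point where new generators stop appearing.
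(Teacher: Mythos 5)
Your plan correctly identifies that the statement is a corollary of Theorem~\ref{th: reduced gb}: once that theorem pins down the degree-$k$ leading terms as $m(x)\LT(f_{n+k-2})$ with $m(x)$ running over a specific family of monomials, the count is a finite combinatorial exercise. That is exactly what the paper does, so the overall strategy matches.

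The problem is that your proposal stops at the plan stage and leaves the crucial content unexecuted. The hinge of the argument is the precise description from Theorem~\ref{th: reduced gb}: $m(x)$ must be an \emph{admissible} monomial of degree $k-2$ in the variables $x_0,\ldots,x_{\lfloor\frac{n+k-7}{2}\rfloor}$, where ``admissible'' means $a_i+a_{i+1}\le 1$ for all $i$ (equivalently, the support is a set of indices with no two equal or consecutive). You gesture at ``a gap condition \`a la Rogers--Ramanujan'' and a ``stars-and-bars / lattice-path count,'' but you never state the condition, never determine the variable range, and never carry out the count. Those are not side details: they are the entire proof. Once they are in place, the standard fact that the number of admissible degree-$l$ monomials in $s$ variables is $\binom{s-l+1}{l}$ (equivalently, the number of $l$-subsets of $[s]$ with no two consecutive elements) gives, with $l=k-2$ and $s=1+\lfloor\frac{n+k-7}{2}\rfloor$,
\[
\binom{1+\lfloor\tfrac{n+k-7}{2}\rfloor-(k-2)+1}{k-2}=\binom{\lfloor\tfrac{n-k+1}{2}\rfloor}{k-2},
\]
which is the claimed formula. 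Your alternative recursive route (matching $\binom{m}{k-2}=\binom{m-1}{k-2}+\binom{m-1}{k-3}$ against the $I_{n-2}$/$I_{n-3}$ decomposition of the Gr\"obner basis) would also work, and you correctly note the parity bookkeeping, but you also correctly flag that it requires proving no unexpected reductions occur; that is precisely the content of Theorem~\ref{th: reduced gb}, so this route does not avoid the hard part either. In short: right idea, right target theorem, but the proof itself is missing the admissibility description and the concrete count, which are what make the binomial coefficient appear.
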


Our proof of Theorem \ref{th: intro H recursion} does not use Gr\"obner bases at all. First, by an explicit inductive argument in Theorem \ref{th:syz} we give a complete description of the first syzygy module for $f_i$. Then, we define a ``shift operator" $S: R_n\to R_{n+1}$, which sends $x_i$ to $x_{i+1}$, and identify 
$I_n\cap x_0R_n$ and $I_n/(I_n\cap x_0R_n)$ with the images of $I_{n-3}$ and $I_{n-2}$ under appropriate powers of $S$. This implies the recursion relation in Theorem \ref{th: intro H recursion}.

We also observe a recursive structure in the minimal free resolution of $R_n/I_n$. In particular, we prove the following:

\begin{theorem}
\label{th: intro ranks recursion}
Let $b(i,n)$ denote the rank of the $i$-th term in the minimal free resolution for $R_n/I_n$, in other words the $i$-th Betti number. Then
$$
b(i,n)=b(i,n-1)+b(i-1,n-3)+b(i-2,n-3).
$$
\end{theorem}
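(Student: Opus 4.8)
The plan is to lift the Hilbert-series recursion of Theorem \ref{th: intro H recursion} to the level of free resolutions, using the same geometric decomposition of $I_n$ that produced it. Recall that the proof of Theorem \ref{th: intro H recursion} identifies $I_n \cap x_0 R_n$ with a shifted copy of $I_{n-3}$ (via $S^3$, roughly) and the quotient $I_n/(I_n\cap x_0R_n)$ with a shifted copy of $I_{n-2}$ (via $S^2$); equivalently, one gets a short exact sequence relating $R_n/I_n$, a shift of $R_{n-2}/I_{n-2}$, and a shift of $R_{n-3}/I_{n-3}$. Concretely I expect the decomposition to yield, after tensoring with the appropriate polynomial extensions so that all modules live over $R_n$, a short exact sequence of $R_n$-modules of the form
\[
0 \to (R_{n-3}/I_{n-3})\otimes R_n\,(\text{shifted}) \xrightarrow{\ \cdot x_0\ } R_n/I_n \to (R_{n-2}/I_{n-2})\otimes R_n\,(\text{shifted}) \to 0,
\]
where the first map is multiplication by $x_0$ followed by the shift identification, and the quotient is reduction mod $x_0$. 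The key point is that adjoining the extra variables $x_{n-1}$ (resp.\ $x_{n-1},x_{n-2}$) to $R_{n-3}$ (resp.\ $R_{n-2}$) is a flat polynomial extension, so it preserves minimal free resolutions and hence Betti numbers: $b(i,n-3)$ and $b(i,n-2)$ are unchanged after this base change, and the shift operator $S$ is a graded ring isomorphism onto its image so it does not affect Betti numbers either.

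Granting the short exact sequence, the argument is then homological. The long exact sequence in $\operatorname{Tor}^{R_n}(-,\kk)$ reads
\[
\cdots \to \operatorname{Tor}_i^{R_n}(M',\kk) \to \operatorname{Tor}_i^{R_n}(R_n/I_n,\kk) \to \operatorname{Tor}_i^{R_n}(M'',\kk) \to \operatorname{Tor}_{i-1}^{R_n}(M',\kk) \to \cdots,
\]
where $M'$ is the shifted $R_{n-3}/I_{n-3}$ and $M''$ is the shifted $R_{n-2}/I_{n-2}$. If the connecting maps $\operatorname{Tor}_i(M'',\kk)\to\operatorname{Tor}_{i-1}(M',\kk)$ all vanished we would immediately get $b(i,n)=\dim\operatorname{Tor}_i(M'',\kk)+\dim\operatorname{Tor}_i(M',\kk)=b(i,n-2)+b(i,n-3)$, which is \emph{not} the claimed recursion. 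So the recursion $b(i,n)=b(i,n-1)+b(i-1,n-3)+b(i-2,n-3)$ tells us the connecting homomorphism is highly nontrivial; the correct strategy is instead to build a (not necessarily minimal) free resolution of $R_n/I_n$ as the mapping cone of a chain map between the minimal resolutions of $M'$ and $M''$, and then to analyze exactly how much cancellation occurs when this mapping cone is reduced to a minimal resolution. One should not expect to do this directly for all $n$ at once; rather, I would set up a simultaneous induction on $n$ that tracks not just the ranks $b(i,n)$ but the full graded Betti table (or at least enough of the differential) to pin down which ghost terms in the mapping cone cancel.

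The main obstacle, and where the real content lies, is controlling the minimality of the mapping cone — equivalently, identifying the image of the connecting map precisely. This is exactly the place where the extra $b(i,n-1)$ term must come from: one wants to show that the mapping cone of $(\text{res of }M') \to (\text{res of }M'')$, after minimization, has total Betti numbers equal to $b(i,n-2)+b(i,n-3)$ minus the rank of the cancellation, and that this difference reorganizes into $b(i,n-1)+b(i-1,n-3)+b(i-2,n-3)$. A clean way to see this would be to first prove the purely numerical identity $b(i,n-2) = b(i,n-1) + b(i-2,n-3)$ holds if and only if the recursion holds — i.e.\ reduce to showing $b(i,n-2)+b(i,n-3) - b(i,n) = b(i,n-2) - b(i,n-1) - b(i-2,n-3)$ is the rank of the cancellation, which is $\dim$ of the image of the connecting map — and then identify that image geometrically. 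I would expect the cleanest route to go through the explicit Gröbner basis and the recursive description of syzygies already established in Theorems \ref{th:syz}, \ref{th:gb induction}, and \ref{th: reduced gb}, using those to exhibit explicit generators of the relevant $\operatorname{Tor}$ groups and a combinatorial matching witnessing the cancellation; alternatively, if the graded pieces are tracked, one can verify the recursion degree-by-degree against the Hilbert-series recursion of Theorem \ref{th: intro H recursion} together with the numerical rigidity coming from the known total Hilbert series, though this only gives the alternating sum unless minimality is controlled separately.
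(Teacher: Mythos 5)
Your approach differs from the paper's and has a genuine gap, in two places. First, a technical error: the modules appearing in the $x_0$-decomposition are not merely flat polynomial extensions of $R_{n-2}/I_{n-2}$ and $R_{n-3}/I_{n-3}$. By Lemma \ref{lem: In plus x0} and Corollary \ref{cor: In cap x0}, the submodule is $R_n/(\langle x_0,x_1\rangle + S^2(I_{n-3})[x_{n-1}])$ and the quotient is $R_n/(\langle x_0\rangle + S(I_{n-2})[x_{n-1}])$; in particular $x_0$ and $x_1$ (resp.\ $x_0$) act by \emph{zero}. Over $R_n$ the Betti numbers therefore pick up a Koszul factor, e.g.\ $b(i,n-3)+2b(i-1,n-3)+b(i-2,n-3)$ for the submodule and $b(i,n-2)+b(i-1,n-2)$ for the quotient, not the $b(i,n-3)$ and $b(i,n-2)$ you asserted. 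Second, and more fundamentally, you correctly flag that ``the real content lies'' in controlling cancellation in the mapping cone — and you leave that step undone. That step \emph{is} the theorem, so what you have is an outline of a possible route, with a wrong formula for the terms that would feed into it, not a proof.

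The paper sidesteps the mapping-cone difficulty entirely by using a different short exact sequence, not the $x_0$-decomposition. It compares $I_n$ with the subideal $\langle f_1,\ldots,f_{n-1}\rangle = I_{n-1}R_n$. The key observation (Theorem \ref{thm: recursion resolution}) is that the minimal free $R_n$-resolution of $I_{n-1}R_n$ sits as a \emph{subcomplex} of the minimal resolution of $I_n$, so there is no connecting map and no cancellation to analyze: one immediately has $b(i,n)=b(i,n-1)+\operatorname{rank}\bigl(F(i,n)/F(i,n-1)\bigr)$. The quotient complex is identified, via the explicit syzygies of Theorem \ref{th:syz}, as the minimal resolution of the cyclic module $I_n/I_{n-1}R_n \cong R_n/\langle x_0, S(f_1),\ldots,S(f_{n-3})\rangle$ (shifted), which is the tensor product of the two-term complex $R_n\xleftarrow{x_0}R_n$ with the shifted resolution of $I_{n-3}$; its ranks are exactly $b(i-1,n-3)+b(i-2,n-3)$. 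Your instinct to bring in Theorem \ref{th:syz} was right — it is exactly what justifies the subcomplex claim and pins down the quotient — but it needs to be applied to this $n\mapsto n-1$ inclusion, not to the $x_0$-filtration.
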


As a consequence, we can compute the projective dimension of $R_n/I_n$.

\begin{corollary}
The projective dimension of $R_n/I_n$ equals $\lceil \frac{2n}{3}\rceil$. 
\end{corollary}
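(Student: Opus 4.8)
The plan is to deduce the corollary directly from the Betti number recursion of Theorem~\ref{th: intro ranks recursion}. Write $p(n) = \pd(R_n/I_n)$, which is finite since $R_n$ is a polynomial ring, and recall that $p(n) = \max\{\, i : b(i,n) \neq 0 \,\}$. Since every $b(i,n)$ is a rank and hence nonnegative, the recursion $b(i,n) = b(i,n-1) + b(i-1,n-3) + b(i-2,n-3)$ immediately yields $b(i,n) \geq b(i,n-1)$ and $b(i,n) \geq b(i-2,n-3)$, so that $p(n) \geq \max\bigl(p(n-1),\, p(n-3)+2\bigr)$. Conversely, if $i > \max\bigl(p(n-1),\, p(n-3)+2\bigr)$ then $i > p(n-1)$, $i-1 > p(n-3)$ and $i-2 > p(n-3)$, so all three terms on the right of the recursion vanish and $b(i,n)=0$; hence $p(n) \leq \max\bigl(p(n-1),\, p(n-3)+2\bigr)$. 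Thus for $n \geq 3$ we obtain the clean recursion
$$
p(n) = \max\bigl(p(n-1),\, p(n-3)+2\bigr).
$$

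It then remains to record the base cases and run an induction. For $n=0$ the ring $R_0/I_0 = \kk$ is free, so $p(0)=0$; for $n=1$, $R_1/I_1 = \kk[x_0]/x_0^2$ is a hypersurface, so $p(1)=1$; and for $n=2$ the ideal $I_2 = (x_0^2, x_0 x_1)$ has the single minimal syzygy $x_1\cdot x_0^2 - x_0 \cdot x_0 x_1 = 0$, giving the minimal free resolution $0 \to R_2 \to R_2^2 \to R_2 \to R_2/I_2 \to 0$, so $p(2)=2$. These agree with $\lceil 2n/3\rceil$ for $n=0,1,2$. Now suppose $n\geq 3$ and $p(m)=\lceil 2m/3\rceil$ for all $m<n$. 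Since $2(n-3)/3 = 2n/3 - 2$ differs from $2n/3$ by an integer, $\lceil 2(n-3)/3\rceil + 2 = \lceil 2n/3\rceil$, while monotonicity of the ceiling gives $\lceil 2(n-1)/3\rceil \leq \lceil 2n/3\rceil$. Hence $p(n) = \max\bigl(\lceil 2(n-1)/3\rceil,\, \lceil 2n/3\rceil\bigr) = \lceil 2n/3\rceil$, completing the induction.

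The actual content here is entirely in Theorem~\ref{th: intro ranks recursion}; once that recursion (with its nonnegative terms) is in hand, extracting the projective dimension is elementary bookkeeping, and the only thing to watch is handling the three residue classes of $n$ modulo $3$ uniformly, which the identity $\lceil 2(n-3)/3\rceil = \lceil 2n/3\rceil - 2$ takes care of. Alternatively, one could run the same computation through the Auslander--Buchsbaum formula $p(n) = n - \operatorname{depth}(R_n/I_n)$, recasting the statement as $\operatorname{depth}(R_n/I_n) = \lfloor n/3\rfloor$; but going through the Betti recursion is the most direct route given what has already been established.
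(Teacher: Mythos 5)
Your proof is correct and follows essentially the same approach as the paper: both deduce the projective dimension from the Betti number recursion $b(i,n)=b(i,n-1)+b(i-1,n-3)+b(i-2,n-3)$ and small base cases. The one place where you are slightly more careful than the paper is in deriving the clean two-sided recursion $p(n)=\max\bigl(p(n-1),\,p(n-3)+2\bigr)$ from nonnegativity of the $b(i,n)$; the paper simply asserts $\pd(I_n)=\pd(I_{n-3})+2$, which in principle requires knowing a priori that $p(n-3)+2\geq p(n-1)$, a point your max formulation handles transparently.
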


\begin{remark}
It is easy to see that the reduced scheme $(\text{Jet}^{n-1}D)^{\text{red}}$ is a linear subspace given by the
equations $x_0=\ldots=x_{\lfloor\frac{n-1}{2}\rfloor}=0$ and has dimension 
$$
\dim \text{Jet}^{n-1}D=n-1-\left\lfloor\frac{n-1}{2}\right\rfloor=\left\lceil \frac{n-1}{2}\right\rceil.
$$
\end{remark}

A more careful analysis of the gradings in Theorem \ref{th: intro ranks recursion} implies another formula for the series $H_n(q,t)$
which was first conjectured in \cite{GOR}.

\begin{theorem}
The Hilbert series of $R_n/I_n$ has the following form:
$$
H_n(q,t)=\frac{1}{\prod_{i=0}^{n-1}(1-q^{i}t)}\sum_{p=0}^{\infty}(-1)^{p}\prod_{k=0}^{p-1}(1-q^{k}t)\times
$$
$$
\left(q^{\frac{5p^2-3p}{2}}t^{2p}\binom{n-2p+1}{p}_q-q^{\frac{5p^2+5p}{2}}t^{2p+2}\binom{n-2p-1}{p}_q\right).
$$
\end{theorem}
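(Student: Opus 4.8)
The plan is to derive this ``fermionic-type'' formula from the Betti number recursion in Theorem~\ref{th: intro ranks recursion}, keeping careful track of both the homological index $i$ and the two internal gradings $(i,j)$. First I would upgrade the recursion $b(i,n)=b(i,n-1)+b(i-1,n-3)+b(i-2,n-3)$ to a bigraded statement: each generator contributing to a syzygy carries a definite bidegree, and the shift operator $S\colon R_n\to R_{n+1}$ used in the proof of Theorem~\ref{th: intro H recursion} multiplies the $q$-grading in a predictable way, so the recursion becomes an identity for the bigraded Betti polynomials $B_n(q,t,z):=\sum_{i}(-1)^i\sum_{j,l}\dim_{\kk}(\mathrm{Tor}_i)_{j,l}q^l t^j z^i$ (or, setting $z=-1$, for the $K$-polynomial $K_n(q,t)$ with $H_n(q,t)=K_n(q,t)/\prod_{i=0}^{n-1}(1-q^it)$). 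The claimed formula is exactly the assertion that
$$
K_n(q,t)=\sum_{p=0}^{\infty}(-1)^{p}\prod_{k=0}^{p-1}(1-q^{k}t)\left(q^{\frac{5p^2-3p}{2}}t^{2p}\binom{n-2p+1}{p}_q-q^{\frac{5p^2+5p}{2}}t^{2p+2}\binom{n-2p-1}{p}_q\right),
$$
since the prefactor $\prod_{i=0}^{n-1}(1-q^it)$ is the Hilbert series denominator of $R_n$.

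The main computation is then to check that the right-hand side satisfies the same recursion as $K_n$. Writing $P_n$ for the proposed $K$-polynomial, the bigraded version of Theorem~\ref{th: intro ranks recursion} should read $P_n=P_{n-1}+(\text{shift in }q)\cdot t\cdot P_{n-3}+(\text{shift in }q)\cdot t^2\cdot P_{n-3}$, with the two $P_{n-3}$ terms reflecting the $b(i-1,\cdot)$ and $b(i-2,\cdot)$ summands and each carrying an explicit power of $q$ dictated by how $S^{a}$ acts. I would verify this by splitting $P_n$ into its ``first sum'' $A_n:=\sum_p(-1)^p q^{(5p^2-3p)/2}t^{2p}\prod_{k<p}(1-q^kt)\binom{n-2p+1}{p}_q$ and ``second sum'' $C_n:=\sum_p(-1)^p q^{(5p^2+5p)/2}t^{2p+2}\prod_{k<p}(1-q^kt)\binom{n-2p-1}{p}_q$, noting $C_n$ is essentially $A_n$ with $p$ shifted and an extra $t^2$, and then using the standard $q$-Pascal recursions $\binom{m}{p}_q=\binom{m-1}{p}_q+q^{m-p}\binom{m-1}{p-1}_q=\binom{m-1}{p-1}_q+q^p\binom{m-1}{p}_q$ to peel the ``$P_{n-1}$'' part off and match the remainder against the shifted $P_{n-3}$ terms. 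The exponents $\tfrac{5p^2\mp 3p}{2}$ and $\tfrac{5p^2+5p}{2}$ are precisely what make the quadratic-in-$p$ shifts line up, so this is a finite, if fiddly, manipulation of $q$-binomials.

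Finally I would nail down the base cases: one must check the formula against $K_0,K_1,K_2$ (equivalently against the initial conditions $H_0,H_1,H_2$ of Theorem~\ref{th: intro H recursion}), which is a direct substitution since for small $n$ only the $p=0$ and $p=1$ terms survive the binomial coefficients $\binom{n-2p\pm 1}{p}_q$. The recursion in Theorem~\ref{th: intro ranks recursion} together with agreement on enough initial values then forces $H_n=P_n/\prod_{i=0}^{n-1}(1-q^it)$ for all $n$. I expect the main obstacle to be the bookkeeping of the two internal gradings in the Betti recursion: Theorem~\ref{th: intro ranks recursion} as stated only tracks the homological degree, so the real content is identifying exactly which powers of $q$ (and which power of $t$, $t^1$ versus $t^2$) accompany the two copies of $b(\cdot,n-3)$, and confirming that these are uniform in $i,j$ — this requires looking back at the syzygy description in Theorem~\ref{th:syz} and the precise form of the maps $I_n\cap x_0R_n\cong S^{?}(I_{n-3})$ and $I_n/(I_n\cap x_0R_n)\cong S^{?}(I_{n-2})$. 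Once the graded recursion for $P_n$ is pinned down, matching it to the $q$-binomial identity above is routine.
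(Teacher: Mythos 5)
Your approach is correct and would work, but it is a genuinely different (and arguably more direct) route than the one in the paper. The paper first proves an explicit bigraded formula for each Betti polynomial $\widehat{b}(i,n)$ separately (Proposition in Section~\ref{sec:combinatorics}, verified against the bigraded recursion \eqref{eq: q ranks recursion} coming from Theorem~\ref{thm: recursion resolution}), and then obtains the theorem by taking the alternating sum $\sum_i(-1)^i\widehat{b}(i,n)$ and applying the $q$-binomial theorem $\prod_{k=0}^{p-1}(1-q^kt)=\sum_j(-1)^jq^{j(j-1)/2}t^j\binom{p}{j}_q$. You instead propose to collapse to the $K$-polynomial $K_n=\sum_i(-1)^i\widehat{b}(i,n)$ immediately, derive the bigraded recursion $K_n=K_{n-1}-q^{n-1}t^2(1-t)K_{n-3}(q,qt)$ from Theorem~\ref{thm: recursion resolution}, and check that the proposed formula satisfies it via $q$-Pascal. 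Both methods are verifications of the same kind; yours avoids the intermediate $(q,t)$-Betti formula and the $q$-binomial theorem entirely, while the paper's establishes that intermediate formula as a result of independent interest.

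Two small points for you to watch when executing this. First, the grading bookkeeping you flagged as the ``main obstacle'' works out cleanly: the factor attached to the $b(i-1,n-3)$ term is $q^{n-1}t^2$ with the substitution $t\mapsto qt$ (degree of $f_n$ plus one application of $S$), and the $b(i-2,n-3)$ term gets an extra $t$ from the $x_0$; so the correct weights are $t^2$ and $t^3$, not $t$ and $t^2$ as your sketch suggests. This gives the $-q^{n-1}t^2(1-t)$ coefficient after signs cancel. (Incidentally, the paper's equation \eqref{F recursion} displays $(1-t^2)$ where $(1-t)$ is meant; your derivation would catch this.) Second, the verification does split cleanly along your proposed decomposition $K_n=A_n-C_n$: one finds $A_n-A_{n-1}=-q^{n-1}t^2(1-t)A_{n-3}(q,qt)$ and the identical identity for $C_n$, each following from a single application of $\binom{m}{p}_q-\binom{m-1}{p}_q=q^{m-p}\binom{m-1}{p-1}_q$ and a reindexing $p\mapsto p+1$ which absorbs the $(1-t)$ into $\prod_{k<p+1}(1-q^kt)$; the quadratic exponents $\tfrac{5p^2\mp3p}{2}$ indeed make the bookkeeping close. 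Once these two pieces and the base cases $n\le 3$ are in place, the argument is complete.
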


The paper is organized as follows. In Section \ref{sec:syzygies} we introduce the shift operator $S$, describe its properties and prove Theorem \ref{th:syz} which explicitly describes all syzygies between the $f_i$. In Section \ref{sec:Hilbert}, we use the shift operator to find a recursive relation for the Hilbert series and to prove Theorem \ref{th: intro H recursion}. In Section \ref{sec:groebner}, we use the recursive structure to describe a Gr\"obner basis for $I_n$. In Section \ref{sec:resolution}, we give a recursive description of the minimal free resolution of $R_n/I_n$ and prove Theorem \ref{th: intro ranks recursion}.  In Section \ref{sec:combinatorics}, we solve both of the above recursions explicitly (with the given initial conditions) and give two explicit combinatorial formulas for $H_n(q,t)$.
Finally, in Section \ref{sec:limit} we briefly discuss the limit of all these techniques at $n\to \infty$ and the connection to the Rogers-Ramanujan identity. 

\section*{Acknowledgments}

E. G. would like to thank Boris Feigin, Mikhail Bershtein, James Lepowsky, Kirill Paramonov and Anne Schilling for useful discussions, and Russian Academic Excellence Project 5-100 for its support. O. K. thanks Eric Babson and J\'esus de Loera for discussions in the initial stages of the project. The work of E. G. and O. K. was supported by the NSF grants DMS-1700814  and DMS-1559338. The work of E. G. in section \ref{sec:combinatorics} was supported by the RSF grant 16-11-10160. O.K. was also supported by the Ville, Kalle and Yrj\"o V\"ais\"al\"a foundation of the Finnish Academy of Science and Letters. 

\section{Ideals and syzygies}
\label{sec:syzygies}

\subsection{Ideals}

Let $R_n=\kk[x_0,\ldots,x_{n-1}]$ and $f_k=\sum_{i=0}^{k-1}x_{i}x_{k-1-i}$. Define $I_n\subseteq R_n$ to be the ideal generated by $f_1,\ldots,f_n$. Let $F_n$ be the free $R_n$-module with the basis $e_1,\ldots,e_n$. Consider the map $\phi_n:F_n\to R_n$ given by the equation
$$
\phi_n(\alpha_1,\ldots,\alpha_n)=f_1\alpha_1+\ldots+f_n\alpha_n.
$$
The $R_n$-module $\Ker(\phi_n)$ is called the first syzygy module of $I_n$.

\begin{lemma}
One has
\begin{equation}
\label{eqn: mu rel}
\sum_{i=0}^{n}(n-3i)x_if_{n+1-i}=0.
\end{equation}
\end{lemma}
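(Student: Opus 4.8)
The plan is to verify the identity $\sum_{i=0}^{n}(n-3i)x_i f_{n+1-i}=0$ by a direct expansion, substituting $f_{n+1-i}=\sum_{j=0}^{n-i} x_j x_{n-i-j}$ and collecting coefficients of the degree-$3$ monomials $x_a x_b x_c$ with $a+b+c = n$. First I would write
\[
\sum_{i=0}^{n}(n-3i)x_i f_{n+1-i}=\sum_{i=0}^{n}\sum_{j=0}^{n-i}(n-3i)\,x_i x_j x_{n-i-j},
\]
so that after setting $(a,b,c)=(i,j,n-i-j)$ the whole sum becomes $\sum_{a+b+c=n}(n-3a)\,x_a x_b x_c$, where the triple $(a,b,c)$ ranges over all ordered triples of nonnegative integers summing to $n$.

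The key observation is that the monomial $x_a x_b x_c$ is symmetric in $a,b,c$, so its total coefficient in the collected sum is obtained by symmetrizing the weight $n-3a$ over the three positions: for a fixed \emph{multiset} $\{a,b,c\}$, the contributions from the distinct orderings add up to a multiple of
\[
(n-3a)+(n-3b)+(n-3c)=3n-3(a+b+c)=3n-3n=0.
\]
Hence the coefficient of every monomial vanishes, and the sum is identically zero. One has to be a little careful to phrase this so it is correct regardless of whether $a,b,c$ are distinct, two coincide, or all three coincide: the cleanest way is to \emph{not} group into multisets at all, but instead note that $\sum_{a+b+c=n}(n-3a)x_ax_bx_c$, $\sum_{a+b+c=n}(n-3b)x_ax_bx_c$, and $\sum_{a+b+c=n}(n-3c)x_ax_bx_c$ are all equal by relabeling the summation indices (a bijection of the index set of ordered triples), so each equals one third of their sum, which is $\sum_{a+b+c=n}(3n-3(a+b+c))x_ax_bx_c=0$.

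The ``main obstacle'' here is purely bookkeeping rather than conceptual: one must make sure the index ranges in the double sum genuinely cover all ordered triples $(a,b,c)$ with $a+b+c=n$ exactly once — in particular that $i$ runs $0,\dots,n$ and $j$ runs $0,\dots,n-i$ with $c=n-i-j\ge 0$ forced — and that the reindexing (cyclic permutation of $a,b,c$) is a genuine bijection of this index set. Once that is set up, the cancellation is immediate. I would present the argument in the two-line form: expand to $\sum_{a+b+c=n}(n-3a)x_ax_bx_c$, symmetrize in $a,b,c$ to replace $n-3a$ by $\tfrac13\big((n-3a)+(n-3b)+(n-3c)\big)=0$, and conclude.
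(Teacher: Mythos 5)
Your proof is correct and takes essentially the same approach as the paper: expand the double sum into $\sum_{a+b+c=n}(n-3a)x_ax_bx_c$ and use symmetry in $(a,b,c)$ to see that the total coefficient vanishes since $(n-3a)+(n-3b)+(n-3c)=0$. Your relabeling argument with ordered triples is a slightly more careful way of phrasing the paper's statement that ``the coefficient at each monomial $x_ix_kx_l$'' vanishes, but the idea is identical.
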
 
\begin{proof}
Indeed,
$$
\sum_{i=0}^{n}(n-3i)x_if_{n+1-i}=\sum_{i+k+l=n}(n-3i)x_ix_kx_l.
$$
The coefficient at each monomial $x_ix_kx_l$ equals $$(n-3i)+(n-3k)+(n-3l)=3n-3(i+k+l)=3n-3n=0.$$
\end{proof} 

For $0<k<n$, define
$$
\mu_k:=(-2kx_k,(-2k+3)x_{k-1},\ldots,kx_0,0,\ldots,0)\in F_n.
$$
By \eqref{eqn: mu rel}, we have $\phi_n(\mu_k)=0$.
Denote also $\nu_{ij}=f_ie_j-f_je_i$ (for $i\neq j$). It is clear that $\phi_n(\nu_{ij})=0$. The main result of this section is the following.

\begin{theorem}
\label{th:syz}
The first syzygy module $\Ker(\phi_n)$ is generated by $\mu_k$ and $\nu_{i,j}$ over $R_n$.
\end{theorem}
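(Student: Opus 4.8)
The plan is to prove that $\Ker(\phi_n)$ is generated by the $\mu_k$ and the Koszul syzygies $\nu_{ij}$ by induction on $n$, exploiting the shift operator $S\colon R_n\to R_{n+1}$, $x_i\mapsto x_{i+1}$. The base cases for small $n$ can be checked by hand (when $n\le 2$ the ideal has one or two generators and the syzygy module is well understood). For the inductive step, the key structural observation is that $S(f_k)=$ (something simple involving $f$'s), so that the generators of $I_n$ in the "top" variables $x_1,\ldots,x_{n-1}$ are, up to the shift, the generators $f_1,\ldots,f_{n-1}$ of $I_{n-1}$; meanwhile the generators involving $x_0$ are controlled separately. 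Concretely, I would stratify a syzygy by its dependence on $x_0$: given $(\alpha_1,\ldots,\alpha_n)\in\Ker(\phi_n)$, first reduce modulo $x_0$ and view the resulting relation as a syzygy among the images of the $f_k$ in $R_n/x_0R_n\cong S(R_{n-1})$-type ring.

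The main steps, in order: First I would establish the precise relationship between $\{f_1,\dots,f_n\}\subset R_n$ and $S(\{f_1,\dots,f_{n-1}\})$ — one has $f_k(0,x_1,\dots)=S(f_{k-2}')$ up to indexing, since setting $x_0=0$ kills $x_0x_{k-1}$ and the middle terms shift. Second, I would use the exact sequence relating $R_n$, $R_n/x_0R_n$, and multiplication by $x_0$ to split an arbitrary syzygy: the part that survives mod $x_0$ is, by the inductive hypothesis applied to the smaller ideal, an $R_n$-combination of (shifted) $\mu$'s and $\nu$'s plus a correction that is divisible by $x_0$; the correction term, being a syzygy supported on $x_0R_n$, must then be accounted for by the $\mu_k$'s and $\nu_{ij}$'s that actually involve $f_1=x_0^2$. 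Third, I would check that the "new" relations appearing at level $n$ — that is, $\mu_n$ and the $\nu_{in}$ — are exactly what is needed to lift the inductive generators back from $R_{n-1}$-data to $R_n$-data, closing the induction. Throughout, the identity $\sum_{i}(n-3i)x_if_{n+1-i}=0$ from the Lemma is what makes the $\mu_k$ genuinely new syzygies rather than Koszul combinations.

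I expect the main obstacle to be the bookkeeping in the reduction modulo $x_0$: when one sets $x_0=0$, the generator $f_1=x_0^2$ and $f_2=2x_0x_1$ both vanish, and $f_k$ for $k\ge 3$ becomes (up to shift) a generator of a \emph{smaller} jet ideal, but the correspondence between the index $k$ in $R_n$ and the index in the shifted smaller ring is offset by $2$ (matching the $H_{n-2}$ in the Hilbert series recursion). Keeping the two dropped generators $f_1,f_2$ and their syzygies $\mu_1,\mu_2,\nu_{1j},\nu_{2j}$ correctly integrated — rather than losing or double-counting relations — is the delicate part. A secondary technical point is that $R_n/x_0R_n$ is not literally $R_{n-1}$ but $\kk[x_1,\dots,x_{n-1}]$, so one must carefully transport the inductive hypothesis along $S$ and verify that $S$ sends the syzygy generators of $I_{n-2}$ (resp.\ $I_{n-3}$) to the appropriate ones among the $\mu_k,\nu_{ij}$ in $R_n$; a clean way to package this is to prove a short lemma that $S$ induces an isomorphism of the relevant syzygy data, and then the induction becomes essentially formal.
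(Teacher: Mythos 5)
Your overall plan — induction on $n$, stratifying a syzygy by its dependence on $x_0$, and transporting data along the shift $S$ — is exactly the skeleton of the paper's proof, and your observations about the base cases, the index offset by $2$ (so that $R_n/x_0R_n$ sees $I_{n-2}$, not $I_{n-1}$), and the role of the final new generators $\mu_{n-1},\nu_{i,n}$ are all correct.

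However, there is a genuine gap precisely where you propose to ``package the argument cleanly.'' You write that a short lemma should show that ``$S$ induces an isomorphism of the relevant syzygy data, and then the induction becomes essentially formal.'' This is false, and the failure is the entire content of the proof. The shift of a syzygy is \emph{not} a syzygy: if $\mu_k\in\Ker(\phi_{n-2})$ then $S(\mu_k)\notin\Ker(\phi_n)$; one has instead (Corollary~\ref{cor:shift mu})
$$
\phi_{n+2}(S(\mu_k))=(2k+3)x_{k+2}f_2-(k+3)x_0S(f_{k+2}),
$$
which is a nonzero element of $I_{n+2}$ divisible by $x_0$ but involving $f_2$ in an essential way. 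Consequently, after you subtract from $\alpha$ the ``lift'' of the inductive expression for its image mod $x_0$, you do not obtain a syzygy plus a syzygy-correction ``supported on $x_0R_n$.'' Either you subtract $S(\mu_k)$, in which case the result is not a syzygy at all; or you subtract the genuine generators $\mu_{k+3},\nu_{i+2,j+2}$, in which case you do get a syzygy, but only the components $\alpha_i$ with $i\ge 3$ become divisible by $x_0$ while $\alpha_1,\alpha_2$ do not, and dividing the resulting relation by $x_0$ produces
$$
\alpha_1x_0+2\alpha_2x_1+\sum_{i\ge3}\gamma_if_i=0,
$$
which is not a syzygy of the $f_i$. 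Disentangling this mixed relation is where the real work lives, and your proposal has no mechanism for it.

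The paper closes this gap with three ingredients you do not anticipate: the explicit formula just quoted for $\phi(S(\mu_k))$; Lemma~\ref{lem:divisible}, which under the inductive hypothesis extracts the form of the top coefficient $\alpha_n$ (namely $\alpha_n=Ax_0+Bx_1+\sum\gamma_if_i$) from the hypothesis that $\phi_n(\alpha)$ is divisible by $x_0$; and Lemma~\ref{lem: x1 f last}, which rewrites $x_1S(f_{n-2})$ in terms of $f_1,\ldots,f_{n-1}$ modulo $x_0$. These let one reduce $\alpha_n$ to $\delta x_0+\sum\delta_if_i$, then subtract $\tfrac{1}{n-1}\delta\mu_{n-1}+\sum\delta_i\nu_{i,n}$ and fall back to a syzygy among $f_1,\ldots,f_{n-1}$. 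Calling this ``bookkeeping'' or a ``secondary technical point'' underestimates it: without the specific identity behind Corollary~\ref{cor:shift mu} and the ad hoc Lemma~\ref{lem: x1 f last}, the induction does not close.

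Minor points: you write $\mu_n$ where you mean $\mu_{n-1}$ (there is no $\mu_n$ in $F_n$ as defined), and you write $R_n/x_0R_n\cong S(R_{n-1})$-type ring, which is fine, but keep in mind the image of $I_n$ there is $S(I_{n-2})[x_{n-1}]$, so the inductive hypothesis you invoke is for $I_{n-2}$, and the final descent (after subtracting $\mu_{n-1}$ and $\nu_{i,n}$) also uses the case $n-1$ via the observation that syzygies of $\langle f_1,\ldots,f_{n-1}\rangle\subset R_n=R_{n-1}[x_{n-1}]$ are base-changed from $R_{n-1}$.
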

We prove Theorem \ref{th:syz} in Section \ref{sub:syz}. 

\subsection{The shift operator}

We define a ring homomorphism $S:R_n\to R_{n+1}$ by the equation $S(x_i)=x_{i+1}$. Note that $S$ is injective
and we can uniquely write any polynomial in $R_n$ in the form
$$
f=x_0f'+S(f''),\ f'\in R_n,f''\in R_{n-1}.
$$
The following equation is clear from the definition and will be very useful below:
\begin{equation}
\label{eq:shift f}
f_n=2x_0x_{n-1}+S(f_{n-2}).
\end{equation}
By abuse of notation, denote also $S:F_n\to F_{n+2}$ the map which is given by
\begin{equation}
\label{eq: shift alpha}
S(\alpha_1,\ldots,\alpha_n)=(0,0,S(\alpha_1),\ldots,S(\alpha_n)).
\end{equation}

\begin{lemma}
\label{lem:almost syzygy}
Let $\alpha\in F_n$. Then $\phi_{n+2}(S(\alpha))$ is divisible by $x_0$ if and only if $\phi_n(\alpha)=0$. 
\end{lemma}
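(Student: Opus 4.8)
The plan is to compute $\phi_{n+2}(S(\alpha))$ directly and isolate its behavior modulo $x_0$. Write $\alpha=(\alpha_1,\ldots,\alpha_n)\in F_n$, so by \eqref{eq: shift alpha} we have $S(\alpha)=(0,0,S(\alpha_1),\ldots,S(\alpha_n))\in F_{n+2}$, and therefore
$$
\phi_{n+2}(S(\alpha))=\sum_{k=1}^{n}f_{k+2}\,S(\alpha_k).
$$
Now I would use the key identity \eqref{eq:shift f}, namely $f_{k+2}=2x_0x_{k+1}+S(f_k)$, to split each term: this gives
$$
\phi_{n+2}(S(\alpha))=2x_0\sum_{k=1}^{n}x_{k+1}S(\alpha_k)+\sum_{k=1}^{n}S(f_k)S(\alpha_k)
=2x_0\sum_{k=1}^{n}x_{k+1}S(\alpha_k)+S\!\left(\sum_{k=1}^{n}f_k\alpha_k\right),
$$
where in the last step I use that $S$ is a ring homomorphism. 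The second summand is precisely $S(\phi_n(\alpha))$.

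Thus $\phi_{n+2}(S(\alpha))=2x_0\,g + S(\phi_n(\alpha))$ where $g=\sum_{k=1}^n x_{k+1}S(\alpha_k)\in R_{n+1}$. The first term $2x_0 g$ is visibly divisible by $x_0$. So $\phi_{n+2}(S(\alpha))$ is divisible by $x_0$ if and only if $S(\phi_n(\alpha))$ is divisible by $x_0$. It remains to check that for any $h\in R_n$, the element $S(h)\in R_{n+1}$ is divisible by $x_0$ exactly when $h=0$ — but in fact $S(h)$ is never divisible by $x_0$ unless $h=0$, since $S(h)$ lies in $\kk[x_1,\ldots,x_n]$, and the only element of that subring divisible by $x_0$ is $0$. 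This is immediate from the unique decomposition $f=x_0f'+S(f'')$ mentioned before \eqref{eq:shift f}: if $S(h)=x_0 f'$ then uniqueness forces $f'=0$ and $h=0$. Combining, $\phi_{n+2}(S(\alpha))$ is divisible by $x_0$ iff $S(\phi_n(\alpha))=0$ iff $\phi_n(\alpha)=0$ (using injectivity of $S$), which is the claim.

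There is no real obstacle here; the lemma is essentially a bookkeeping consequence of \eqref{eq:shift f} and the fact that $S$ embeds $R_n$ into the subring $\kk[x_1,\ldots,x_n]\subseteq R_{n+1}$ complementary to $x_0$. The only point requiring a word of care is the step identifying ``$S(h)$ divisible by $x_0$'' with ``$h=0$,'' which I would phrase via the unique decomposition of elements of $R_{n+1}$ as $x_0 f' + S(f'')$ so that it reads cleanly; everything else is a one-line substitution.
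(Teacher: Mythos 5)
Your proof is correct and follows essentially the same route as the paper: both expand $\phi_{n+2}(S(\alpha))$ using \eqref{eq:shift f}, reduce modulo $x_0$ to obtain $S(\phi_n(\alpha))$, and conclude via the observation that a nonzero element of $S(R_n)\subseteq\kk[x_1,\ldots,x_{n+1}]$ is never divisible by $x_0$. You simply spell out the "$\equiv\bmod x_0$" step of the paper by writing the explicit $2x_0 g$ term, which is a harmless expansion of the same argument.
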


\begin{proof}
By \eqref{eq:shift f} we have
$$
\phi_{n+2}(S(\alpha))= \sum_{i=1}^{n} S(\alpha_i)f_{i+2} 
\equiv S\left(\sum_{i=1}^{n} \alpha_if_{i}\right)\mod x_0.
$$
Therefore $\phi_{n+2}(S(\alpha))$ is divisible by $x_0$ if and only if $S(\sum \alpha_if_{i})$ is divisible by $x_0$. But since no shift contains $x_0$, this happens if and only if
$$
S\left(\sum \alpha_if_{i}\right)=0\Leftrightarrow \sum \alpha_if_{i}=\phi_n(\alpha)=0.
$$
\end{proof}

Since $\phi_n(\mu_k)=\phi_{n}(\nu_{ij})=0$, by Lemma \ref{lem:almost syzygy}  the images of $S(\mu_k)$ and $S(\nu_{ij})$ under $\phi_{n+2}$ are divisible by $x_0$. The following lemma describes these images explicitly.
\begin{lemma}
One has $\phi_{n+2}(S(\mu_k))=(2k+6)x_{k+3}f_1+(2k+3)x_{k+2}f_2-(k+3)x_0f_{k+4}$, $\phi_{n+2}(S(\nu_{ij}))=2x_0x_{j+1}f_{i+2}-2x_0x_{i+1}f_{j+2}$.
\end{lemma}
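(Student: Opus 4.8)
The statement to prove is the explicit formula
$$
\phi_{n+2}(S(\mu_k))=(2k+6)x_{k+3}f_1+(2k+3)x_{k+2}f_2-(k+3)x_0f_{k+4},
$$
$$
\phi_{n+2}(S(\nu_{ij}))=2x_0x_{j+1}f_{i+2}-2x_0x_{i+1}f_{j+2}.
$$
Both identities are a matter of expanding the definitions; the only subtlety is bookkeeping the boundary terms that appear when one applies the shift operator and uses \eqref{eq:shift f}. The plan is to treat each identity separately, in each case reducing to the corresponding relation one degree lower (namely $\phi_n(\mu_k)=0$ and $\phi_n(\nu_{ij})=0$, which are already established) via Lemma \ref{lem:almost syzygy}, and then compute the ``error term'' that Lemma \ref{lem:almost syzygy} only says is divisible by $x_0$.

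For $\nu_{ij}$ this is immediate. By definition $S(\nu_{ij})=S(f_i e_j - f_j e_i)$, and the shift on $F_n$ sends $e_j\mapsto e_{j+2}$, so $S(\nu_{ij}) = S(f_i)e_{j+2} - S(f_j)e_{i+2}$, giving $\phi_{n+2}(S(\nu_{ij})) = S(f_i)f_{j+2} - S(f_j)f_{i+2}$. Now substitute $S(f_i)=f_{i+2}-2x_0x_{i+1}$ and $S(f_j)=f_{j+2}-2x_0x_{j+1}$ from \eqref{eq:shift f} (rewritten with index shifted by $2$). The ``main'' terms $f_{i+2}f_{j+2}-f_{j+2}f_{i+2}$ cancel, and what remains is exactly $-2x_0x_{i+1}f_{j+2}+2x_0x_{j+1}f_{i+2}$, as claimed. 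No real obstacle here.

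For $\mu_k$ the computation is the longer one and is where the care is needed. Write $\mu_k=\sum_{a} c_a\, e_a$ with $c_a = (3(k-a)-2k)\,x_{?}$ chosen so that the $a$-th component is $(-2k+3(a-1))x_{k-a+1}$ supported on $1\le a\le k+1$ — in other words $\mu_k$ encodes the relation $\sum_{i=0}^{k}(k-3i)x_i f_{k+1-i}=0$ from \eqref{eqn: mu rel}. Applying $S$ to $F_n$ shifts all basis indices by $2$ and applies $S$ to coefficients, so $\phi_{n+2}(S(\mu_k)) = \sum_{i=0}^{k}(k-3i)\,x_{i+1}\,S(f_{k+1-i})$. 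Again substitute $S(f_{k+1-i}) = f_{k+3-i} - 2x_0 x_{k+2-i}$. The first group of terms is $\sum_{i=0}^{k}(k-3i)x_{i+1}f_{k+3-i}$; reindexing by $j=i+1$ this is $\sum_{j=1}^{k+1}(k-3j+3)x_j f_{k+4-j}$, which is a ``truncated'' version of the full relation $\sum_{j=0}^{k+3}\big((k+3)-3j\big)x_j f_{k+4-j}=0$ coming from \eqref{eqn: mu rel} applied with parameter $n=k+3$. Subtracting, the truncated sum equals minus the three missing boundary terms, i.e. the $j=0$, $j=k+2$, $j=k+3$ terms: $-(k+3)x_0 f_{k+4} + (k-3(k+2)+3)x_{k+2}f_2 + (k-3(k+3)+3)x_{k+3}f_1 = -(k+3)x_0f_{k+4} - (2k+3)x_{k+2}f_2 - (2k+6)x_{k+3}f_1$. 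Finally, the second group of terms, $-2x_0\sum_{i=0}^{k}(k-3i)x_{i+1}x_{k+2-i}$, must be shown to vanish: this follows because the coefficient of a fixed monomial $x_ax_b$ in it (with $a+b = k+3$, $1\le a,b$) gets contributions $(k-3(a-1))+(k-3(b-1))$ from the two ways of assigning the indices, and $(k-3a+3)+(k-3b+3) = 2k+6-3(a+b) = 2k+6-3(k+3) = -k-3 \ne 0$ — so this argument as stated does not quite work; one must instead note that the sum $\sum_{i=0}^{k}(k-3i)x_{i+1}x_{k+2-i}$ is, after the substitution $i\mapsto k-i$, equal to $\sum_{i=0}^{k}(k-3(k-i))x_{k-i+1}x_{i+2} = \sum_{i}(-2k+3i)x_{i+2}x_{k-i+1}$, which is not obviously the negative of the original, so in fact this inner sum need not vanish and its contribution is absorbed by recombining with \eqref{eq:shift f} once more — this re-substitution step is precisely the bookkeeping obstacle. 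The cleanest route is to avoid the partial telescoping: start from $\phi_{n+2}(S(\mu_k)) \equiv 0 \bmod x_0$ (Lemma \ref{lem:almost syzygy}), so it suffices to verify the claimed right-hand side also reduces mod $x_0$ to the same thing and that the difference, being divisible by $x_0$, is a specific multiple of $x_0$ that one pins down by comparing one further coefficient; alternatively, just expand everything into monomials $x_ax_bx_c$ with $a+b+c = k+3$ and match coefficients directly, which is finite and mechanical.

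I expect the main obstacle to be purely organizational: keeping the index shifts consistent between the three occurrences of the shift operator ($S$ on $R_n$, $S$ on $F_n$, and the relation \eqref{eq:shift f}), and correctly identifying which three boundary terms of the length-$(k+4)$ instance of \eqref{eqn: mu rel} are the ones that survive. There is no conceptual difficulty; the honest proof is a one- or two-line monomial coefficient comparison, and the ``reduce mod $x_0$ plus Lemma \ref{lem:almost syzygy}'' framing is just a way to organize that comparison so that only the $x_0$-divisible part needs explicit attention.
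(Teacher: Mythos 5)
Your argument for $\phi_{n+2}(S(\nu_{ij}))$ is correct and is the same as the paper's: apply the definition of the shift on $F_n$, then substitute $S(f_i)=f_{i+2}-2x_0x_{i+1}$.

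For $\mu_k$ there is a genuine slip that then derails you. You write
\[
\phi_{n+2}(S(\mu_k))=\sum_{i=0}^{k}(k-3i)\,x_{i+1}\,S(f_{k+1-i}),
\]
but this is not what $\phi_{n+2}$ of the shifted tuple gives. The shift on $F_n$ moves the $a$-th coefficient to position $a+2$ and applies $S$ to it, while $\phi_{n+2}$ then pairs the $b$-th coordinate with $f_b$ (not with $S(f_{b-2})$). The correct identity is
\[
\phi_{n+2}(S(\mu_k))=\sum_{i=0}^{k}(k-3i)\,x_{i+1}\,f_{k+3-i},
\]
with no further $S$'s. Once this is fixed, your ``first group'' telescoping argument is already the whole story and it closes cleanly: reindexing $j=i+1$ gives $\sum_{j=1}^{k+1}\big((k+3)-3j\big)x_j f_{k+4-j}$, and comparing with the full relation $\sum_{j=0}^{k+3}\big((k+3)-3j\big)x_j f_{k+4-j}=0$ (equation \eqref{eqn: mu rel} with $n=k+3$) leaves exactly the three missing boundary terms $j=0$, $j=k+2$, $j=k+3$, which assemble to $(2k+6)x_{k+3}f_1+(2k+3)x_{k+2}f_2-(k+3)x_0f_{k+4}$. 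The ``second group'' $-2x_0\sum(k-3i)x_{i+1}x_{k+2-i}$, which you correctly observed does not vanish, was never there; it was an artifact of the erroneous substitution $S(f_{k+1-i})=f_{k+3-i}-2x_0x_{k+2-i}$ applied to a sum that already contained $f_{k+3-i}$ rather than $S(f_{k+1-i})$. The paralysis at the end of your proposal (``so in fact this inner sum need not vanish\dots this re-substitution step is precisely the bookkeeping obstacle'') is a symptom of this bookkeeping error, not of a real difficulty in the lemma.

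Your corrected route is logically equivalent to the paper's but phrased differently: the paper works at the level of the free module $F_{n+2}$, observing that $S(\mu_k)$ and $\mu_{k+3}$ agree in all positions $3,\dots,k+3$ and hence
\[
S(\mu_k)=\mu_{k+3}+(2k+6)x_{k+3}e_1+(2k+3)x_{k+2}e_2-(k+3)x_0e_{k+4},
\]
so the result follows in one line by applying $\phi_{n+2}$ and using $\phi_{n+2}(\mu_{k+3})=0$. Your version applies $\phi_{n+2}$ first and then compares the resulting scalar sum against the full relation; same content, but the paper's formulation sidesteps the reindexing and leaves no opportunity for the index-shift mistake you fell into.
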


\begin{proof}
By definition, 
$$
S(\mu_k)=(0,0,-2kx_{k+1},(-2k+3)x_{k},\ldots,kx_1,0,\ldots,0)=
$$
$$
\mu_{k+3}+(2k+6)x_{k+3}e_1+(2k+3)x_{k+2}e_2-(k+3)x_0e_{k+4},
$$
so
$$
\phi_{n+2}(S(\mu_k))=(2k+6)x_{k+3}f_1+(2k+3)x_{k+2}f_2-(k+3)x_0f_{k+4}.
$$
Also, $S(\nu_{ij})=S(f_i)e_{j+2}-S(f_j)e_{i+2}$, so
$$
\phi_{n+2}(S(\nu_{ij}))=S(f_i)f_{j+2}-S(f_j)f_{i+2}=(f_{i+2}-2x_0x_{i+1})f_{j+2}-(f_{j+2}-2x_0x_{j+1})f_{i+2}=$$ $$
2x_0x_{j+1}f_{i+2}-2x_0x_{i+1}f_{j+2}.
$$
\end{proof}

\begin{corollary}
\label{cor:shift mu}
One has 
$$
\phi_{n+2}(S(\mu_k))=(2k+3)x_{k+2}f_2-(k+3)x_0S(f_{k+2})=kx_{k+2}f_{2}-(k+3)x_0S^2(f_{k}).
$$
\end{corollary}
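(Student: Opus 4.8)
The plan is to obtain both displayed equalities directly from the formula for $\phi_{n+2}(S(\mu_k))$ in the preceding lemma, using only the shift identity \eqref{eq:shift f} together with the explicit expressions $f_1=x_0^2$ and $f_2=2x_0x_1$. So I would start from
$$
\phi_{n+2}(S(\mu_k))=(2k+6)x_{k+3}f_1+(2k+3)x_{k+2}f_2-(k+3)x_0f_{k+4}.
$$

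For the first equality, substitute the instance $f_{k+4}=2x_0x_{k+3}+S(f_{k+2})$ of \eqref{eq:shift f} into the last term, so that $-(k+3)x_0f_{k+4}=-2(k+3)x_0^2x_{k+3}-(k+3)x_0S(f_{k+2})$. Since $f_1=x_0^2$ gives $(2k+6)x_{k+3}f_1=2(k+3)x_0^2x_{k+3}$, the two multiples of $x_0^2x_{k+3}$ cancel, and what remains is exactly $(2k+3)x_{k+2}f_2-(k+3)x_0S(f_{k+2})$.

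For the second equality, apply the shift operator $S$ to the instance $f_{k+2}=2x_0x_{k+1}+S(f_k)$ of \eqref{eq:shift f}, obtaining $S(f_{k+2})=2x_1x_{k+2}+S^2(f_k)$. Hence $-(k+3)x_0S(f_{k+2})=-2(k+3)x_0x_1x_{k+2}-(k+3)x_0S^2(f_k)=-(k+3)x_{k+2}f_2-(k+3)x_0S^2(f_k)$, where the last step uses $f_2=2x_0x_1$. Adding $(2k+3)x_{k+2}f_2$ to both sides collapses the coefficient of $x_{k+2}f_2$ from $2k+3$ to $(2k+3)-(k+3)=k$, which yields $kx_{k+2}f_2-(k+3)x_0S^2(f_k)$.

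The argument is purely computational; the only points requiring care are selecting the correct instances of \eqref{eq:shift f} (index $k+4$ in the first step, and the $S$-image of index $k+2$ in the second) and tracking the coefficient identities $2k+6=2(k+3)$ and $(2k+3)-(k+3)=k$ so that the cancellations line up. I do not anticipate any genuine obstacle here; the corollary is essentially a bookkeeping consequence of the lemma and the shift relation.
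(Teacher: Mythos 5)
Your computation is correct and coincides with the paper's own proof: the paper likewise starts from the lemma's formula $(2k+6)x_{k+3}f_1+(2k+3)x_{k+2}f_2-(k+3)x_0f_{k+4}$, expands $f_{k+4}$ via the shift relation, and cancels against $f_1=x_0^2$ and $f_2=2x_0x_1$. You simply perform the two shift expansions one at a time (giving the intermediate $S(f_{k+2})$ form first, then $S^2(f_k)$), whereas the paper expands $f_{k+4}$ fully at once and reads off both equalities; this is a presentational difference only.
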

 
\begin{proof}
$$
\phi_{n+2}(S(\mu_k))=(2k+6)x_{k+3}f_1+(2k+3)x_{k+2}f_2-(k+3)x_0f_{k+4}=
$$
$$
(2k+6)x_{k+3}f_1+(2k+3)x_{k+2}f_2-(k+3)(2x_0^2x_{k+3}+2x_0x_1x_{k+2}+x_0S^2(f_{k}))=
$$
$$
(2k+3)x_{k+2}f_2-(k+3)x_0S(f_{k+2})=kx_{k+2}f_{2}-(k+3)x_0S^2(f_{k}).
$$
\end{proof}

\begin{example}
$\mu_1=(-2x_1,x_0)$, so $S(\mu_1)=(0,0,-2x_2,x_1)$, and
$$
\phi_4(S(\mu_1))=-2x_2(2x_0x_2+x_1^2)+x_1(2x_0x_3+2x_1x_2)=
$$
$$
2x_3x_0x_1-4x_0x_2^2=x_3f_2-4x_0S^2(x_0^2).
$$
\end{example}

\begin{lemma}
\label{lem: x1 f last}
The polynomial  $x_1S(f_{n-2})$ can be expressed via $f_1,\ldots,f_{n-1}$ modulo $x_0$.
\end{lemma}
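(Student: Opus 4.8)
The plan is to obtain the stated expression by applying the shift homomorphism $S$ to the linear relation \eqref{eqn: mu rel} and then using \eqref{eq:shift f} to convert the resulting shifted generators $S(f_k)$ back into the $f_k$ modulo $x_0$. Throughout I assume $n\geq 4$, which is the range in which the statement is actually needed; the scalar that appears below is $n-3$, so some such bound is genuinely necessary (for $n=3$ the claim would read $x_1^3\in(f_1,f_2)+x_0R_3$, which is false).

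First I take \eqref{eqn: mu rel} with $n$ replaced by $n-3$, namely $\sum_{i=0}^{n-3}(n-3-3i)\,x_i f_{n-2-i}=0$ in $R_{n-2}$, and apply the ring homomorphism $S : R_{n-2}\to R_{n-1}\subseteq R_n$. Since $S(x_i)=x_{i+1}$ this yields
\[
\sum_{i=0}^{n-3}(n-3-3i)\,x_{i+1}\,S(f_{n-2-i})=0 .
\]
The $i=0$ term is $(n-3)\,x_1 S(f_{n-2})$, and its coefficient is nonzero because $\kk$ has characteristic zero and $n\geq 4$; hence
\[
x_1 S(f_{n-2})=-\frac{1}{n-3}\sum_{i=1}^{n-3}(n-3-3i)\,x_{i+1}\,S(f_{n-2-i}).
\]
It remains to remove the shifts on the right-hand side. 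By \eqref{eq:shift f} we have $S(f_k)=f_{k+2}-2x_0x_{k+1}$, so $S(f_{n-2-i})\equiv f_{n-i}\pmod{x_0}$ for each $i=1,\dots,n-3$, and as $i$ runs over this range $n-i$ runs over $3,\dots,n-1$. Substituting gives
\[
x_1 S(f_{n-2})\equiv -\frac{1}{n-3}\sum_{i=1}^{n-3}(n-3-3i)\,x_{i+1}\,f_{n-i}\pmod{x_0},
\]
a combination of $f_3,\dots,f_{n-1}$, as desired.

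The only real idea here is to apply $S$ to \eqref{eqn: mu rel}; after that the argument is forced, since \eqref{eqn: mu rel} is precisely a linear syzygy among the $f_k$ in which $x_1 S(f_{n-2})$ becomes an extreme term after shifting, and \eqref{eq:shift f} is exactly the device that trades $S(f_k)$ for $f_{k+2}$ up to a multiple of $x_0$. The one point to watch is the invertibility of the scalar $n-3$, i.e.\ the hypothesis $n\geq 4$; I expect the surrounding text either restricts to $n\geq 4$ or disposes of the small cases separately.
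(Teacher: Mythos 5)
Your proof is correct and is essentially identical to the paper's: the paper also applies the shift $S$ to the relation \eqref{eqn: mu rel} (with $n$ replaced by $n-3$), obtaining $(n-3)x_1S(f_{n-2})+(n-6)x_2S(f_{n-3})+\cdots=0$, and then uses $S(f_i)\equiv f_{i+2}\pmod{x_0}$ to conclude. You are slightly more careful than the paper in noting explicitly that the scalar $n-3$ must be nonzero, so that the argument (and indeed the statement) requires $n\ge 4$.
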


\begin{proof}
We have $(n-3)x_0f_{n-2}+(n-6)x_1f_{n-3}+\ldots-2(n-3)x_{n-2}f_0=0$, so
$$
(n-3)x_1S(f_{n-2})+(n-6)x_2S(f_{n-3})+\ldots-2(n-3)x_{n-1}S(f_0)=0.
$$
It remains to notice that $S(f_i)\equiv f_{i+2}\mod x_0$.
\end{proof}

\begin{lemma}
\label{lem:divisible}
Assume that $\Ker(\phi_{n-2})$ is generated by $\mu_k$ and $\nu_{i,j}$ and suppose that  $\phi_n(\alpha)$ is divisible by $x_0$.
Then $\alpha_n=Ax_0+Bx_1+\sum_{i=3}^{n-1} \gamma_i f_i$ for some $A, B$ and $\gamma_i$. 
\end{lemma}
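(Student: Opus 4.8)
The plan is to reduce the condition $x_0 \mid \phi_n(\alpha)$ to a statement about $\Ker(\phi_{n-2})$ and then apply the hypothesis. First I would use the canonical decomposition of the shift operator: write each coordinate uniquely as $\alpha_i = x_0\alpha_i' + S(\alpha_i'')$ with $\alpha_i'\in R_n$ and $\alpha_i''\in R_{n-1}$. Since $f_1 = x_0^2$ and $f_2 = 2x_0x_1$ are divisible by $x_0$, and $f_i = 2x_0x_{i-1}+S(f_{i-2})$ by \eqref{eq:shift f} for $i\ge 3$, reducing $\phi_n(\alpha)=\sum_i f_i\alpha_i$ modulo $x_0$ gives
$$
\phi_n(\alpha)\equiv\sum_{i=3}^{n}S(f_{i-2})S(\alpha_i'')=S\Big(\sum_{j=1}^{n-2}f_j\,\alpha_{j+2}''\Big)\pmod{x_0}.
$$
The right-hand side lies in $\kk[x_1,\dots,x_{n-1}]$ and so involves no $x_0$; being also divisible by $x_0$, it must vanish, and since $S$ is injective we obtain the exact relation $\sum_{j=1}^{n-2}f_j\,\alpha_{j+2}''=0$ in $R_{n-1}$.

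Next I would push this relation down to $R_{n-2}$. The generators $f_1,\dots,f_{n-2}$ do not involve $x_{n-2}$, so expanding $\alpha_{j+2}''=\sum_k\beta_{j,k}x_{n-2}^k$ with $\beta_{j,k}\in R_{n-2}$ and comparing coefficients of $x_{n-2}^k$ yields $\sum_{j=1}^{n-2}f_j\beta_{j,k}=0$, i.e.\ $(\beta_{1,k},\dots,\beta_{n-2,k})\in\Ker(\phi_{n-2})$ for every $k$. (Equivalently: $R_{n-1}$ is free, hence flat, over $R_{n-2}$, so the syzygy module of $(f_1,\dots,f_{n-2})$ over $R_{n-1}$ is generated by the same elements.) By the hypothesis, each $(\beta_{1,k},\dots,\beta_{n-2,k})$ is an $R_{n-2}$-combination of the $\mu_m$ with $0<m<n-2$ and the $\nu_{a,b}$ with $1\le a,b\le n-2$; hence $(\alpha_3'',\dots,\alpha_n'')$ is an $R_{n-1}$-combination of these.

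Now I would read off the last (i.e.\ the $(n-2)$-th) coordinate of each generator. The $l$-th coordinate of $\mu_m$ is $(3l-2m-3)x_{m+1-l}$ for $1\le l\le m+1$ and $0$ otherwise, so its $(n-2)$-th coordinate vanishes unless $m=n-3$, in which case it equals $(n-3)x_0$. The last coordinate of $\nu_{a,b}$ vanishes unless $n-2\in\{a,b\}$, in which case it is $\pm f_a$ with $a\le n-3$. Therefore $\alpha_n''$ is an $R_{n-1}$-linear combination of $x_0$ and of $f_1,\dots,f_{n-3}$; absorbing $f_1=x_0^2$ and $f_2=2x_0x_1$ into the $x_0$-term, we get $\alpha_n''=x_0g+\sum_{i=3}^{n-3}\delta_if_i$ for some $g,\delta_i\in R_{n-1}$. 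Applying $S$ and using $S(x_0)=x_1$ together with $S(f_i)=f_{i+2}-2x_0x_{i+1}$ from \eqref{eq:shift f}, then reindexing $i\mapsto i+2$, gives $S(\alpha_n'')=x_1S(g)+\sum_{i=5}^{n-1}S(\delta_{i-2})f_i-2x_0\sum_i S(\delta_i)x_{i+1}$. Finally $\alpha_n=x_0\alpha_n'+S(\alpha_n'')$ has exactly the asserted form, with $B=S(g)$, $\gamma_i=S(\delta_{i-2})$ for $5\le i\le n-1$ (and $\gamma_3=\gamma_4=0$), and $A=\alpha_n'-2\sum_i S(\delta_i)x_{i+1}$; one checks each of these lies in $R_n$.

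The step I expect to be the crux is the second one: recognizing that the mod-$x_0$ relation, pulled back through $S^{-1}$, is a relation over $R_{n-1}$ that decomposes coefficient-by-coefficient into genuine elements of $\Ker(\phi_{n-2})$, so that the inductive hypothesis is applicable. The accompanying index bookkeeping also needs care — that the generators avoid $x_{n-2}$, that among the $\mu_m$ with $m<n-2$ only $\mu_{n-3}$ has a nonzero last coordinate, and that after the shift the surviving $f$-indices stay in $\{3,\dots,n-1\}$. The degenerate cases of small $n$, where $\Ker(\phi_{n-2})$ is zero, are immediate, and everything else is routine manipulation with $S$ and \eqref{eq:shift f}.
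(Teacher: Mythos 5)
Your proof is correct and follows the same route as the paper's: decompose each $\alpha_i$ via the shift operator, reduce mod $x_0$ to obtain a syzygy among $f_1,\ldots,f_{n-2}$, invoke the inductive hypothesis, read off the last coordinate of the generators $\mu_m$ and $\nu_{a,b}$, and shift back. The one (welcome) refinement is that where the paper appeals to Lemma \ref{lem:almost syzygy} and implicitly treats $\alpha''$ as an element of $\Ker(\phi_{n-2})$, you spell out that the coordinates $\alpha''_i$ actually lie in $R_{n-1}$ and justify the descent to $\Ker(\phi_{n-2})$ by comparing coefficients of $x_{n-2}^k$ (equivalently flatness of $R_{n-1}$ over $R_{n-2}$); your choice to absorb $f_1,f_2$ into the $x_0$-term before shifting is a purely cosmetic variation.
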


\begin{proof}
As above, we can write $\alpha_i=x_0\alpha'_i+S(\alpha''_{i-2})$ for $i\ge 3$. Since $f_1$ and $f_2$ are divisible by $x_0$, we get
$$
\phi_n(S(\alpha''))=\sum_{i=3}^nS(\alpha''_{i-2})f_i\equiv\sum_{i=1}^{n} \alpha_if_i\equiv 0\mod x_0.
$$
By Lemma \ref{lem:almost syzygy} we get $\phi_{n-2}(\alpha'')=0$. By the assumption, we can write  
$$
\alpha''=\sum_{k<n-2} \beta_k \mu_k+\sum _{i<j\le n-2}\gamma_{i,j}\nu_{ij}.
$$
Therefore
$$
\alpha''_{n-2}=\beta_{n-1}x_0+\sum_{j
\le n-3}\gamma_{j,n-2}f_j,
$$
and
$$
\alpha_n=x_0\alpha'_n+S(\alpha''_{n-2})=x_0\alpha'_n+S(\beta_{n-1})x_1+\sum_{j
\le n-3}S(\gamma_{j,n-2})(f_{j+2}-2x_0x_{j+1}).
$$
\end{proof}

\subsection{Examples}
\label{sec:examples}

Before proving Theorem \ref{th:syz}, we would like to present the proof for $n\le 4$. 

\begin{example}
For $n=2$ we have $f_1=x_0^2$ and $f_2=2x_0x_1$, so the module of syzygies is clearly generated by $(-2x_1,x_0)=\mu_1$.
\end{example}

\begin{example}
Let $n=3$, suppose that $\alpha_1f_1+\alpha_2f_2+\alpha_3f_3=0$. We can write $\alpha_3=\alpha'_3x_0+\alpha''_3$, where $\alpha''_3$ does not contain $x_0$. Since $f_1$ and $f_2$ are divisible by $x_0$ and $f_3=2x_0x_2+x_1^2$, we get $x_1^2\alpha''_3=0$, so $\alpha''_3=0$. Now $\alpha=\frac{1}{2}\alpha'_3\mu_2+\gamma$, where $\gamma$ is a syzygy between $f_i$ with $\gamma_3=0$. By
the previous example, $\gamma$ is a multiple of $\mu_1$, so the module of syzygies is actually generated by $\mu_1$ and $\mu_2$.
\end{example}

\begin{example}
Let $n=4$, suppose that $\alpha$ is a syzygy. We can write $\alpha_3=\alpha'_3x_0+\alpha''_3$ and
$\alpha_4=\alpha'_4x_0+\alpha''_4$ where $\alpha''_i$ do not contain $x_0$. Similarly to the previous case, we obtain
\begin{equation}
\label{syz3step1}
\alpha''_3x_1^2+\alpha''_4\cdot 2x_1x_2=0.
\end{equation}
This means that there exists some $\beta$ such that $\alpha''_3=-2x_2\beta$ and $\alpha''_4=x_1\beta$.
Now
$$
\alpha_1x_0^2+\alpha_2\cdot 2x_0x_1+(\alpha'_3x_0-2x_2\beta)(2x_0x_2+x_1^2)+(\alpha'_4x_0+x_1\beta)(2x_0x_3+2x_1x_2)=0.
$$
The terms without $x_0$ cancel, and the linear terms in $x_0$ are the following:
$$
x_0(2\alpha_2x_1+\alpha'_3x_1^2-4x_2^2\beta+2\alpha'4x_1x_2+2\beta x_1x_3)=0.
$$
Note that all terms but $-4x_2^2\beta$ are divisible by $x_1$, so $\beta$ is divisible by $x_1$, $\beta=m x_1$.
Then 
$$
\alpha_4=\alpha'_4x_0+m x_1^2=(\alpha'_4-2x_2m)x_0+mf_3.
$$
By subtracting $m\nu_{3,4}+\frac{1}{3}(\alpha'_4-2x_2m)\mu_3$ from $\alpha$, we obtain a syzygy between $f_1,f_2,f_3$ and reduce to the previous case. 
\end{example}

\subsection{Syzygies}
\label{sub:syz}

In this section, we prove Theorem \ref{th:syz} by induction on $n$. The base cases were covered in Section \ref{sec:examples}. Suppose that $\alpha=(\alpha_1,\ldots,\alpha_n)\in \Ker(\phi_n)$, i. e. is a linear relation between $f_1,\ldots, f_n$.  As above, write $\alpha_i=\alpha'_ix_0+S(\alpha''_{i-2})$ for $i\ge 3$. 
Without loss of generality, we can assume that $\alpha'_i$ do not contain $x_0$ (otherwise we can subtract a multiple of $\nu_{1,i}$).
Since $$f_i=2x_0x_{i-1}+S(f_{i-2}),$$ by collecting terms without $x_0$ we get 
$
\sum_{i=3}^{n}S(\alpha''_{i-2})S(f_{i-2})=0.
$
This means that $\phi_{n-2}(\alpha'')=0$ and by the induction assumption we may then write $$\app=\sum_{i=3}^{n-1} \beta_{i+1} \mu_{i-2}+\sum_{3\leq j<k\leq n,j\neq k} \beta_{j,k}\nu_{j-2,k-2}.$$
Because 
$$
S(\nu_{j-2,k-2})=-S(f_{k-2})e_j+S(f_{j-2})e_k=\nu_{j,k}+2x_0x_{k}e_j-2x_0x_je_k,
$$
without loss of generality we can assume 
$
\app=S(\sum_{i=3}^{n-1} \beta_{i+1} \mu_{i-2}).
$
By Corollary \ref{cor:shift mu} we get $$\phi_n(S(\mu_{i-2}))=-(i+1)x_0S(f_{i})+(2i-1)x_{i-1}f_2,$$ hence
$$
\phi_n(\alpha)=\alpha_1f_1+(\alpha_2+\sum_{i=3}^{n-1}(2i-1)S(\beta_{i+1})x_{i-1})f_2+\sum_{i=3}^nx_0\ap_if_i-\sum_{i=3}^{n-1} (i+1)S(\beta_{i+1}) x_0S(f_i)=0.
$$
By collecting the terms linear in $x_0$, we get
$$(\alpha_2+\sum_{i=3}^{n-1}(2i-1)S(\beta_{i+1})x_{i-1})2x_1+\sum_{i=3}^n\ap_iS(f_{i-2})-\sum_{i=3}^{n-1} (i+1)S(\beta_{i+1})S(f_i)=0,$$
so
$$
\sum_{i=3}^n\ap_iS(f_{i-2})-\sum_{i=3}^{n-1} (i+1)S(\beta_{i+1})S(f_i)
$$
is divisible by $x_1$, and
$$
\sum_{i=3}^n\alpha'''_{i}f_{i-2}-\sum_{i=3}^{n-1} (i+1)\beta_{i+1}f_i
$$
is divisible by $x_0$, where $\ap_i=S(\alpha'''_i)$.
By Lemma \ref{lem:divisible}, this implies
$$
\beta_n=Bx_0+Cx_1+\sum_{i=3}^{n-2} \gamma_i f_i
$$
for some constants $B, C$.
Now we can rewrite
$$
\alpha_n=\alpha'_nx_0+S(\beta_nx_0)=
\alpha'_nx_0+Bx_1^2+Cx_1x_2+\sum_{i=3}^{n-3} \gamma_i x_1(f_{i+2}-2x_0x_{n-1})+\gamma_{n-2}x_1S(f_{n-2}).
$$
Observe that $x_1^2=f_3-2x_0x_2,x_1x_2=\frac{1}{2}(f_3-2x_0x_3)$ and by Lemma \ref{lem: x1 f last} $x_1S(f_{n-2})$ can be expressed via $f_1,\ldots,f_{n-1}$ modulo $x_0$. In other words,
$$
\alpha_n=\delta x_0+\sum_{i=3}^{n-1}\delta_if_i
$$
for some coefficients $\delta_i$. Then $\alpha-\frac{1}{n-1}\delta \mu_{n-1}-\sum_{i=3}^{n-1}\delta_i \nu_{i,j}$ is a syzygy between $f_1,\ldots,f_{n-1}$, so by the induction assumption it can be expressed as an $R_{n-1}$-linear combination of the $\mu_i$ and $\nu_{i,j}$.

\begin{remark}
The above proof shows that the syzygies $\nu_{1,k}$ and $\nu_{2,k}$ are not necessary, and can be expressed as linear combinations of other syzygies. Indeed, since the coefficients at $e_k$ are divisible by $x_0$, one can subtract an appropriate multiple of $\mu_{k-1}$ and get a syzygy involving $e_1,\ldots,e_{k-1}$ only. 
\end{remark}

\section{Hilbert series}
\label{sec:Hilbert}

In this section, we prove Theorem \ref{th:H recursion} by studying the relation between the ideals $I_n$ and $x_0R_n$.

\begin{lemma}
\label{lem: In plus x0}
One has 
$$
R_n/(x_0R_n+I_n)\simeq S(R_{n-2}/I_{n-2})[x_{n-1}]
$$ as $R_{n}$-modules, the module structure on the right coming from $S: R_{n-1}\to R_n$.
\end{lemma}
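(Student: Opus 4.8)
The plan is to compute the quotient in two stages: first kill $x_0$, then recognize the resulting ideal. Since $S$ is injective with image $\kk[x_1,\ldots,x_{n-1}]$, composing $S\colon R_{n-1}\to R_n$ with the projection $R_n\to R_n/x_0R_n$ gives a ring isomorphism $R_{n-1}\xrightarrow{\ \sim\ } R_n/x_0R_n$; equivalently, $R_n/x_0R_n$ is just the polynomial ring $\kk[x_1,\ldots,x_{n-1}]$. Under this identification the images of the generators of $I_n$ are immediate: $f_1=x_0^2$ and $f_2=2x_0x_1$ die, while for $k\ge 3$ equation \eqref{eq:shift f} gives $f_k\equiv S(f_{k-2})\bmod x_0$. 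Hence
$$
R_n/(x_0R_n+I_n)\;\simeq\;\kk[x_1,\ldots,x_{n-1}]\big/\langle S(f_1),\ldots,S(f_{n-2})\rangle .
$$

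For the second stage I would write $\kk[x_1,\ldots,x_{n-1}]=S(R_{n-2})[x_{n-1}]$, noting that $S(R_{n-2})=\kk[x_1,\ldots,x_{n-2}]$. Because $f_i$ only involves $x_0,\ldots,x_{i-1}$, the polynomial $S(f_i)$ only involves $x_1,\ldots,x_i$; for $i\le n-2$ this means $S(f_i)\in S(R_{n-2})$ and, in particular, none of $S(f_1),\ldots,S(f_{n-2})$ involves the variable $x_{n-1}$. Thus $\langle S(f_1),\ldots,S(f_{n-2})\rangle$ is exactly the extension to $S(R_{n-2})[x_{n-1}]$ of the ideal $S(I_{n-2})=\langle S(f_1),\ldots,S(f_{n-2})\rangle\subseteq S(R_{n-2})$, and the standard isomorphism $A[y]/JA[y]\simeq(A/J)[y]$, applied with $A=S(R_{n-2})$, $J=S(I_{n-2})$, $y=x_{n-1}$, gives
$$
\kk[x_1,\ldots,x_{n-1}]\big/\langle S(f_1),\ldots,S(f_{n-2})\rangle\;\simeq\;\big(S(R_{n-2})/S(I_{n-2})\big)[x_{n-1}].
$$
Finally $S\colon R_{n-2}\to S(R_{n-2})$ is a ring isomorphism carrying $I_{n-2}$ onto $S(I_{n-2})$, so the right-hand side is $S(R_{n-2}/I_{n-2})[x_{n-1}]$, which is the claimed isomorphism.

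It remains to observe that all of these isomorphisms are compatible with the module structures in the statement: each is an isomorphism of quotient rings of $R_n$, where $R_n$ acts on $S(R_{n-2}/I_{n-2})[x_{n-1}]$ through the surjection $R_n\twoheadrightarrow R_n/x_0R_n$, itself identified by the first step with $S(R_{n-1})=\kk[x_1,\ldots,x_{n-1}]$; so $R_n$-linearity is automatic. I do not anticipate a genuine obstacle. The only points requiring care are reading off the reductions $f_k\bmod x_0$ through \eqref{eq:shift f}, and checking that adjoining the free variable $x_{n-1}$ creates no new relations among the $S(f_i)$ — that is, that $\langle S(f_1),\ldots,S(f_{n-2})\rangle\cap S(R_{n-2})=S(I_{n-2})$, which is precisely the flatness of the polynomial extension $S(R_{n-2})\hookrightarrow S(R_{n-2})[x_{n-1}]$.
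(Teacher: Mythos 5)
Your proof is correct and follows essentially the same approach as the paper: rewrite $x_0R_n+I_n=\langle x_0,S(f_1),\ldots,S(f_{n-2})\rangle$ using \eqref{eq:shift f}, then quotient first by $x_0$ and recognize the remaining ring as $S(R_{n-2}/I_{n-2})[x_{n-1}]$. You have merely filled in the routine details (the $A[y]/JA[y]\simeq(A/J)[y]$ step and the module-structure bookkeeping) that the paper's terse two-line proof leaves implicit.
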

\begin{proof}
We have $x_0R_n+I_n=\langle x_0,f_1,\ldots,f_n\rangle=\langle x_0,S(f_1),\ldots,S(f_{n-2})\rangle$, so
$$
R_n/(x_0R_n+I_n)=R_n/\langle x_0,S(f_1),\ldots,S(f_{n-2})\rangle=S(R_{n-2}/I_{n-2})[x_{n-1}].
$$
\end{proof}

\begin{lemma}
\label{lem:containment}
The subspace $x_0S^2(I_{n-3})[x_{n-1}]$ does not intersect the ideal $\langle f_1,f_2\rangle$ in $R_n$. Furthermore, $x_0S^2(I_{n-3})[x_{n-1}]+\langle f_1,f_2\rangle$ is an ideal in $R_n$ which is contained in $I_n\cap x_0R_n$. 
\end{lemma}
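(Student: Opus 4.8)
The plan is to establish the three assertions in turn, all by elementary means. Write $A=x_0S^2(I_{n-3})[x_{n-1}]$ and $B=\langle f_1,f_2\rangle$, and record two structural facts. First, since $f_1=x_0^2$ and $f_2=2x_0x_1$, one has $B=x_0\cdot(x_0,x_1)$, where $(x_0,x_1)$ denotes the ideal of $R_n$ generated by those two variables. Second, every element of $A$ has the form $x_0w$ with $w\in S^2(I_{n-3})[x_{n-1}]\subseteq\kk[x_2,\dots,x_{n-1}]$; in particular $w$ involves neither $x_0$ nor $x_1$. (If $n\le 3$ then $I_{n-3}=0$ and the lemma is trivial, so I assume $n\ge 4$.)

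For the non-intersection statement, suppose $x_0w\in B=x_0\cdot(x_0,x_1)$; cancelling $x_0$ in the domain $R_n$ shows $w\in(x_0,x_1)$. But $w\in\kk[x_2,\dots,x_{n-1}]$, and specializing $x_0=x_1=0$ gives $(x_0,x_1)\cap\kk[x_2,\dots,x_{n-1}]=\{0\}$; hence $w=0$ and $A\cap B=\{0\}$. For the ideal statement, $B$ is already an ideal, so it is enough to check that $x_j\cdot(x_0w)\in A+B$ for every variable $x_j$ of $R_n$. Since $S^2(I_{n-3})$ is an ideal of $\kk[x_2,\dots,x_{n-2}]$, the set $S^2(I_{n-3})[x_{n-1}]$ is closed under multiplication by $x_2,\dots,x_{n-1}$, so $x_j\cdot(x_0w)=x_0(x_jw)\in A$ for $2\le j\le n-1$; moreover $x_0\cdot(x_0w)=f_1w\in B$ and $x_1\cdot(x_0w)=\tfrac{1}{2}f_2w\in B$. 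Thus $A+B$ is stable under multiplication by $R_n$.

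For the containment $A+B\subseteq I_n\cap x_0R_n$, note that every element of $A+B$ is divisible by $x_0$ and that $B\subseteq I_n$, so it remains to show $A\subseteq I_n$. Since $S^2(I_{n-3})$ is generated as an ideal by $S^2(f_1),\dots,S^2(f_{n-3})$, each element of $A$ is an $R_n$-linear combination of the products $x_0S^2(f_k)$ with $1\le k\le n-3$, so it suffices to place each of these in $I_n$. For $1\le k\le n-4$ I would iterate \eqref{eq:shift f} twice to obtain $S^2(f_k)=f_{k+4}-2x_0x_{k+3}-2x_1x_{k+2}$, whence $x_0S^2(f_k)=x_0f_{k+4}-2x_{k+3}f_1-x_{k+2}f_2\in I_n$. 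For the one remaining value $k=n-3$ this breaks down, since $f_{k+4}=f_{n+1}$ is not in $R_n$; there I would use Corollary \ref{cor:shift mu} in the form $n\,x_0S^2(f_{n-3})=(n-3)x_{n-1}f_2-\phi_n(S(\mu_{n-3}))$, whose right-hand side lies in $I_n$ because $\phi_n(S(\mu_{n-3}))$ lies in the image of $\phi_n$, which is $I_n$, and $\kk$ has characteristic zero.

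The step I expect to need genuine care is exactly this boundary case $k=n-3$: the proof of Corollary \ref{cor:shift mu} was carried out through an intermediate expression involving $f_{n+1}\notin R_n$, so I would re-derive the displayed identity directly inside $R_n$, starting from the relation \eqref{eqn: mu rel}, substituting $f_{n+1}=2x_0x_n+2x_1x_{n-1}+S^2(f_{n-3})$ (and likewise writing $x_0^2x_n=x_nf_1$), and observing that the terms containing $x_n$ cancel, leaving an identity among $f_2,\dots,f_n$ that yields $n\,x_0S^2(f_{n-3})=(n-3)x_{n-1}f_2-\phi_n(S(\mu_{n-3}))\in I_n$. Everything else in the argument is a one-line identity or a formal property of the polynomial ring $R_n$.
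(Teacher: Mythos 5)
Your argument is correct and follows essentially the same path as the paper's: the non-intersection claim rests on the observation that $S^2(I_{n-3})[x_{n-1}]$ lives in $\kk[x_2,\dots,x_{n-1}]$; the ideal claim on the same variable-separation together with $x_0\cdot x_0 w=wf_1$, $x_1\cdot x_0 w=\tfrac12 wf_2$; and the containment reduces to showing $x_0S^2(f_k)\in I_n$ for $1\le k\le n-3$, which the paper settles by citing Corollary~\ref{cor:shift mu}.

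The one place where you genuinely deviate is the boundary case $k=n-3$, and your caution here is well placed. As stated, Corollary~\ref{cor:shift mu} does cover $\phi_n(S(\mu_{n-3}))=(n-3)x_{n-1}f_2-nx_0S^2(f_{n-3})$ (all quantities appearing there live in $R_n$), but the proof the paper gives for the corollary passes through $x_0f_{k+4}$ and $\mu_{k+3}$, and after the reindexing needed in Lemma~\ref{lem:containment} that means $f_{n+1}$ and $\mu_n$, neither of which is available in $F_n$ or $R_n$. Your split into $k\le n-4$ (where $x_0S^2(f_k)=x_0f_{k+4}-2x_{k+3}f_1-x_{k+2}f_2$ is immediate) and $k=n-3$ (where you re-derive the identity from \eqref{eqn: mu rel}, substituting $f_{n+1}=2x_0x_n+2x_1x_{n-1}+S^2(f_{n-3})$ and watching the $x_n$-terms cancel against $-2nx_nf_1$) is correct and closes the gap cleanly. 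An even quicker repair, which you might note, is that the corollary's identity is a polynomial identity in $x_0,\dots,x_{k+2}$, so one may simply prove it in the larger ring $R_{n+2}$ where the paper's argument runs without incident and then restrict; either way, the content of your step 3 is sound. Everything else in your write-up matches the paper's reasoning.
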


\begin{proof}
Given a nonzero polynomial $g\in I_{n-3}$, the iterated shift $S^2(g)$ does not contain  $x_0$ or $x_1$, so that $x_0S^2(g)$ is not contained in $\langle f_1,f_2\rangle$. Furthermore, $I_{n-3}$ is stable under multiplication by $x_0,\ldots,x_{n-4}$, so $S^2(I_{n-3})$ is stable under multiplication by $x_2,\ldots,x_{n-2}$, and $x_0S^2(I_{n-3})[x_{n-1}]$ is stable under multiplication by $x_2,\ldots,x_{n-1}$. Multiplication by $x_0$ or $x_1$ sends the latter subspace to $\langle f_1,f_2\rangle$, so  $x_0S^2(I_{n-3})[x_{n-1}]+\langle f_1,f_2\rangle$ is an ideal in $R_n$.

Finally,  to prove that this ideal is contained in $I_n$, it is sufficient to prove that $x_0S^2(f_k)\in I_n$ for $k\le n-3$.
On the other hand, by Corollary \ref{cor:shift mu}:
$$
x_0S^2(f_k)=\frac{1}{k+3}\phi_n(S(\mu_{k}))\mod \langle f_1,f_2\rangle.
$$
\end{proof}

\begin{lemma}
One has 
$$
I_n\cap x_0R_n=x_0S^2(I_{n-3})[x_{n-1}]+\langle f_1,f_2\rangle.
$$
\end{lemma}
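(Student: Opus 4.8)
One inclusion, namely $x_0S^2(I_{n-3})[x_{n-1}]+\langle f_1,f_2\rangle\subseteq I_n\cap x_0R_n$, is exactly Lemma \ref{lem:containment}, so the plan is to establish the reverse containment; throughout, write $J$ for the ideal on the right-hand side. The first step is to record two elementary facts: $x_0f_i\in J$ for $1\le i\le n$ (hence $x_0I_n\subseteq J$, as $J$ is an ideal), and $x_0S(f_m)\in J$ for $1\le m\le n-1$ (hence $x_0S(I_{n-1})\subseteq J$). Both follow by iterating the identity $f_m=2x_0x_{m-1}+S(f_{m-2})$: one obtains $x_0f_i=2x_{i-1}f_1+x_0S(f_{i-2})$ for $i\ge 3$ and $x_0S(f_m)=x_mf_2+x_0S^2(f_{m-2})$ for $m\ge 2$, and when $m\le n-1$ the term $x_0S^2(f_{m-2})$ lies in $x_0S^2(I_{n-3})$ because $m-2\le n-3$; the few remaining cases ($x_0f_1$, $x_0f_2$, $x_0S(f_1)$, $x_0S(f_2)$) are multiples of $f_1$ or $f_2$. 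To pass from these generators to all of $x_0I_n$ and $x_0S(I_{n-1})$ one uses that $J$ is an ideal, together with the stability of $S^2(I_{n-3})[x_{n-1}]$ under multiplication by $x_2,\dots,x_{n-1}$ observed in the proof of Lemma \ref{lem:containment} (an extra factor of $x_1$ does no harm, since $x_0x_1$ is a multiple of $f_2$).

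Now take $g\in I_n\cap x_0R_n$ and write $g=\phi_n(\alpha)$ for some $\alpha\in F_n$. For $i\ge 3$ decompose $\alpha_i=x_0\alpha_i'+S(\alpha_{i-2}'')$ with $\alpha_i'\in R_n$ and $\alpha_{i-2}''\in R_{n-1}$, and set $\alpha''=(\alpha_1'',\dots,\alpha_{n-2}'')\in F_{n-2}$. Splitting $\alpha$ accordingly and using $f_i=2x_0x_{i-1}+S(f_{i-2})$, one finds
$$
g=\alpha_1f_1+\alpha_2f_2+x_0\sum_{i=3}^n\alpha_i'f_i+\phi_n(S(\alpha'')),
$$
and the part of the right-hand side not divisible by $x_0$ equals $S(\phi_{n-2}(\alpha''))$. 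Since $g\in x_0R_n$ and $S$ never introduces $x_0$, this forces $\phi_{n-2}(\alpha'')=0$, that is $\alpha''\in\Ker(\phi_{n-2})$. In the displayed formula, $\alpha_1f_1+\alpha_2f_2\in\langle f_1,f_2\rangle\subseteq J$ and $x_0\sum_{i=3}^n\alpha_i'f_i\in x_0I_n\subseteq J$ by the first step, so the whole problem reduces to showing $\phi_n(S(\alpha''))\in J$.

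For this I would invoke Theorem \ref{th:syz} to write $\alpha''=\sum_k\beta_k\mu_k+\sum_{j<k}\gamma_{jk}\nu_{jk}$ inside $F_{n-2}$, where every index $k$ that appears satisfies $k\le n-3$; then $\phi_n(S(\alpha''))=\sum_kS(\beta_k)\,\phi_n(S(\mu_k))+\sum_{j<k}S(\gamma_{jk})\,\phi_n(S(\nu_{jk}))$. By Corollary \ref{cor:shift mu}, $\phi_n(S(\mu_k))=kx_{k+2}f_2-(k+3)x_0S^2(f_k)$, which lies in $\langle f_2\rangle+x_0S^2(I_{n-3})\subseteq J$ because $k\le n-3$ makes $f_k\in I_{n-3}$. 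Using $\phi_{n+2}(S(\nu_{ij}))=2x_0x_{j+1}f_{i+2}-2x_0x_{i+1}f_{j+2}$ together with $f_m=2x_0x_{m-1}+S(f_{m-2})$, we get $\phi_n(S(\nu_{jk}))=2x_0\bigl(x_{k+1}f_{j+2}-x_{j+1}f_{k+2}\bigr)=2x_0\,S\bigl(x_kf_j-x_jf_k\bigr)$, and since $x_kf_j-x_jf_k\in I_{n-1}$ this lies in $x_0S(I_{n-1})\subseteq J$. Because $J$ is an ideal, the displayed $R_n$-linear combination lies in $J$; hence $g\in J$, which completes the reverse inclusion and, together with Lemma \ref{lem:containment}, the asserted equality.

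The only real difficulty I anticipate is bookkeeping: keeping the index ranges straight so that the auxiliary facts apply (every $\mu_k$ that occurs has $k\le n-3$, so $S^2(f_k)$ makes sense and $f_k\in I_{n-3}$; the $\nu$-contributions involve only $f_m$ with $m\le n$; and the edge coordinates $\alpha_1,\alpha_2$ are absorbed into $\langle f_1,f_2\rangle$), together with reconciling the single shift $S$ appearing in the $\nu$-terms with the double shift $S^2$ in the definition of $J$ by peeling off multiples of $f_1$ and $f_2$. No genuinely new idea is needed beyond assembling Lemma \ref{lem:containment}, Corollary \ref{cor:shift mu}, the formula for $\phi_{n+2}(S(\nu_{ij}))$, and Theorem \ref{th:syz}.
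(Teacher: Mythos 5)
Your proof is correct, and it rearranges the argument relative to the paper. Both proofs hinge on the same ingredients---the syzygy description of $\Ker(\phi_{n-2})$, the identity $\phi_n(S(\mu_k)) = kx_{k+2}f_2 - (k+3)x_0S^2(f_k)$ from Corollary \ref{cor:shift mu}, and the formula for shifted $\nu$-syzygies---but package them differently. The paper invokes Lemma \ref{lem:divisible} to extract a special form for the single coefficient $\alpha_n$, rewrites $\alpha_n f_n$ as a combination of $f_1,\ldots,f_{n-1}$ plus elements of $J := x_0S^2(I_{n-3})[x_{n-1}]+\langle f_1,f_2\rangle$, and then closes with an outer induction on $n$, reducing to the statement with $n-1$ generators. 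You instead decompose every coefficient $\alpha_i$ at once, deduce $\alpha'' \in \Ker(\phi_{n-2})$ from the $x_0$-divisibility of $g$, apply Theorem \ref{th:syz} to $\alpha''$ directly, and verify term by term that each $\phi_n(S(\mu_k))$ and $\phi_n(S(\nu_{jk}))$ lands in $J$. This avoids the outer induction on $n$ entirely, at the modest cost of establishing the auxiliary inclusions $x_0 I_n \subseteq J$ and $x_0 S(I_{n-1}) \subseteq J$ up front; the observation $S(x_kf_j - x_jf_k) = x_{k+1}f_{j+2} - x_{j+1}f_{k+2}$, which makes the $\nu$-contribution visibly land in $x_0S(I_{n-1})$, is a clean way to handle those terms. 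Your version is a bit longer to write out but exhibits $g$ explicitly as a sum of elements of $J$ rather than pushing the residual difficulty into a descending induction, so both routes are legitimate and roughly equal in content.
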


\begin{proof}
By Lemma \ref{lem:containment}, the right hand side is a submodule of the left hand side, so it remains to prove the reverse inclusion.
We have
$$
f_i=2x_0x_{i-1}+S(f_{i-2})=2x_0x_{i-1}+2x_1x_{i-2}+S^2(f_{i-4}).
$$
Suppose that $\sum_{i=1}^{n}\alpha_if_i\in I_n\cap x_0R_n$. Then by Lemma \ref{lem:divisible},
$$
\alpha_n=Ax_0+Bx_1+\sum_{j}\gamma_jf_j=A'x_0+B'x_1+\sum_{j}\gamma_jS^2(f_{j-4}).
$$
Now by \eqref{eqn: mu rel} and Corollary \ref{cor:shift mu}, $x_0f_n$ and $x_1f_n$ can be expressed as $R_n$-linear combinations of $f_1,\ldots,f_{n-1}$ and  elements of
$x_0S^2(I_{n-3})[x_{n-1}]+\langle f_1,f_2\rangle$,
so $\sum_{i=1}^{n}\alpha_if_i$ can be expressed as such a combination as well. Induction on $n$ finishes the proof.
\end{proof}

\begin{corollary}
\label{cor: In cap x0}
One has
$$
x_0R_n/(I_n\cap x_0R_n)=x_0S^2(R_{n-3}/I_{n-3})[x_{n-1}].
$$
\end{corollary}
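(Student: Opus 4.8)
The plan is to read this off directly from the preceding lemma, which gives $I_n\cap x_0R_n=x_0S^2(I_{n-3})[x_{n-1}]+\langle f_1,f_2\rangle$, by exploiting that multiplication by $x_0$ is an injective $R_n$-linear map $R_n\to x_0R_n$ (injective because $R_n$ is a domain). First I would observe that $\langle f_1,f_2\rangle=\langle x_0^2,2x_0x_1\rangle=x_0(x_0R_n+x_1R_n)$, using that $2$ is invertible in $\kk$; combined with the lemma this gives
$$I_n\cap x_0R_n=x_0\bigl(S^2(I_{n-3})[x_{n-1}]+x_0R_n+x_1R_n\bigr).$$
Next I would consider the surjective $R_n$-linear composite $R_n\xrightarrow{\,\cdot x_0\,}x_0R_n\twoheadrightarrow x_0R_n/(I_n\cap x_0R_n)$. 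A polynomial $g$ lies in its kernel iff $x_0g\in I_n\cap x_0R_n$, and by the display above together with cancellation of $x_0$ in the domain $R_n$ this holds iff $g\in S^2(I_{n-3})[x_{n-1}]+x_0R_n+x_1R_n$. Hence $x_0g\mapsto g$ gives an $R_n$-module isomorphism
$$x_0R_n/(I_n\cap x_0R_n)\ \xrightarrow{\ \sim\ }\ R_n/\bigl(S^2(I_{n-3})[x_{n-1}]+x_0R_n+x_1R_n\bigr).$$

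To identify the target, I would quotient first by $x_0R_n+x_1R_n$, obtaining $\kk[x_2,\dots,x_{n-1}]$; since $S(x_i)=x_{i+1}$ we have $\kk[x_2,\dots,x_{n-2}]=S^2(R_{n-3})$, so $\kk[x_2,\dots,x_{n-1}]=S^2(R_{n-3})[x_{n-1}]$. The ideal $S^2(I_{n-3})[x_{n-1}]$ already lies in $\kk[x_2,\dots,x_{n-1}]$ and involves neither $x_0$ nor $x_1$, so its image under this quotient map is itself. Therefore the target equals
$$S^2(R_{n-3})[x_{n-1}]\big/S^2(I_{n-3})[x_{n-1}]=S^2(R_{n-3}/I_{n-3})[x_{n-1}],$$
where I use that $S^2$ is a ring isomorphism $R_{n-3}\xrightarrow{\sim}S^2(R_{n-3})$ sending $I_{n-3}$ onto $S^2(I_{n-3})$, and that adjoining the free variable $x_{n-1}$ is compatible with passing to the quotient. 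Transporting back along $x_0g\mapsto g$ identifies $x_0R_n/(I_n\cap x_0R_n)$ with $x_0\,S^2(R_{n-3}/I_{n-3})[x_{n-1}]$.

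I do not anticipate a real obstacle; the only thing needing care is the bookkeeping of the spare factor $x_0$ and the resulting $R_n$-module structure on the right-hand side. This structure is the evident one in which $x_0$ and $x_1$ act as zero and $x_i$ for $i\ge 2$ acts through $S$: indeed $x_0R_n/(I_n\cap x_0R_n)$ is already annihilated by $x_0$ and $x_1$, since $x_0\cdot x_0R_n=\langle f_1\rangle$ and $x_1\cdot x_0R_n=\langle f_2\rangle$, so it is a module over $R_n/(x_0,x_1)=\kk[x_2,\dots,x_{n-1}]$, and the isomorphism above is one of such modules.
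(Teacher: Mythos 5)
Your proof is correct and follows essentially the same route as the paper: both start from the preceding lemma's description of $I_n\cap x_0R_n$, cancel the overall factor of $x_0$ (using that $R_n$ is a domain) to reduce to computing $R_n$ modulo $S^2(I_{n-3})[x_{n-1}]+\langle x_0,x_1\rangle$, and then identify that quotient with $S^2(R_{n-3}/I_{n-3})[x_{n-1}]$. The paper compresses this by directly writing $x_0R_n/\langle f_1,f_2\rangle=x_0\kk[x_2,\dots,x_{n-1}]$ and then passing to the further quotient, whereas you spell out the cancellation-of-$x_0$ isomorphism and the resulting $R_n/(x_0,x_1)$-module structure more explicitly; that extra care is sound and matches the paper's intent.
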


\begin{proof}
We have
$$
x_0R_n/\langle f_1,f_2\rangle=x_0R_n/(x_0^2,x_0x_1)=x_0\kk[x_2,\ldots,x_{n-1}]=x_0S^2(R_{n-3})[x_{n-1}]
$$
Therefore 
$$
x_0R_n/(I_n\cap x_0R_n)=x_0R_n/(x_0S^2(I_{n-3})[x_{n-1}]+\langle f_1,f_2\rangle)=x_0S^2(R_{n-3}/I_{n-3})[x_{n-1}].
$$
\end{proof}

\begin{theorem}
\label{th:H recursion}
Let $H_n(q,t)$ denote the bigraded Hilbert series of the quotient $R_n/I_n$.
Then one has the following recursion relation
\begin{equation}
\label{H recursion}
H_n(q,t)=\frac{H_{n-2}(q,qt)+tH_{n-3}(q,q^2t)}{1-q^{n-1}t}
\end{equation}
with initial conditions
$$
H_0(q,t)=1,\ H_1(q,t)=1+t,\ H_2(q,t)=\frac{1}{1 - qt} + t.
$$
\end{theorem}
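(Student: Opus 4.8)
The plan is to combine the two structural results just proved, Lemma~\ref{lem: In plus x0} and Corollary~\ref{cor: In cap x0}, via the short exact sequence that splits $R_n/I_n$ into its ``$x_0$-part'' and its ``$x_0$-free part''. Since $x_0$ and all the $f_i$ are bihomogeneous, the chain of bihomogeneous ideals $I_n\subseteq x_0R_n+I_n\subseteq R_n$ yields a short exact sequence of bigraded $R_n$-modules
$$
0\longrightarrow (x_0R_n+I_n)/I_n\longrightarrow R_n/I_n\longrightarrow R_n/(x_0R_n+I_n)\longrightarrow 0,
$$
and by the second isomorphism theorem $(x_0R_n+I_n)/I_n\cong x_0R_n/(x_0R_n\cap I_n)$ as bigraded modules. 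Hence the Hilbert series are additive:
$$
H_n(q,t)=\mathrm{HS}\!\big(x_0R_n/(I_n\cap x_0R_n)\big)(q,t)+\mathrm{HS}\!\big(R_n/(x_0R_n+I_n)\big)(q,t).
$$

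Next I would evaluate each summand using the two cited results, tracking the bigrading carefully. The bookkeeping rests on three elementary facts: (i) because $S$ raises the index of each variable by one, it rescales the $q$-degree of a monomial by $q$ to the power of its number of factors while leaving its $t$-degree unchanged, so $\mathrm{HS}(S(M))(q,t)=\mathrm{HS}(M)(q,qt)$ and likewise $\mathrm{HS}(S^2(M))(q,t)=\mathrm{HS}(M)(q,q^2t)$; (ii) multiplying a module by the element $x_0$, which has bidegree $(0,1)$, multiplies its Hilbert series by $t$; and (iii) adjoining a polynomial variable $x_{n-1}$ of bidegree $(n-1,1)$ multiplies the Hilbert series by $1/(1-q^{n-1}t)$. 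Applying (i)--(iii) to $R_n/(x_0R_n+I_n)\simeq S(R_{n-2}/I_{n-2})[x_{n-1}]$ produces $H_{n-2}(q,qt)/(1-q^{n-1}t)$, and applying them to $x_0R_n/(I_n\cap x_0R_n)=x_0S^2(R_{n-3}/I_{n-3})[x_{n-1}]$ produces $t\,H_{n-3}(q,q^2t)/(1-q^{n-1}t)$; summing gives exactly \eqref{H recursion}. This derivation is valid for $n\ge 3$ (so that $R_{n-3}$ is defined, and so that Corollary~\ref{cor: In cap x0} applies).

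Finally I would settle the base cases by direct inspection: $R_0/I_0=\kk$ gives $H_0=1$; $R_1/I_1=\kk[x_0]/(x_0^2)$ has monomial basis $\{1,x_0\}$, giving $H_1=1+t$; and $I_2=\langle x_0^2,x_0x_1\rangle$, so $R_2/I_2$ has monomial basis $\{x_1^j:j\ge 0\}\cup\{x_0\}$, giving $H_2=\frac{1}{1-qt}+t$. I do not anticipate a genuine obstacle here; the only step that requires attention is the interaction of the shift operator with the second grading — one must remember that $S$ preserves the $t$-degree of a monomial but, on the level of Hilbert series, substitutes $t\mapsto qt$ — together with the routine verification that every manipulation takes place among well-defined elements of $\Z[[q,t]]$ (equivalently, among rational functions whose denominators are products of $1-q^it$), so that additivity of Hilbert series across the short exact sequence is legitimate.
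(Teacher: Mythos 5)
Your proof is correct and takes essentially the same approach as the paper: the same short exact sequence $0\to x_0R_n/(x_0R_n\cap I_n)\to R_n/I_n\to R_n/(x_0R_n+I_n)\to 0$, with the two summands evaluated via Lemma~\ref{lem: In plus x0} and Corollary~\ref{cor: In cap x0}. Your explicit accounting of how $S$, multiplication by $x_0$, and adjoining $x_{n-1}$ each transform the bigraded Hilbert series simply spells out the bookkeeping the paper leaves implicit.
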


\begin{remark}
This recursion is similar, but not identical to the various recursions considered by Andrews  \cite{And1,And2,And3} 
in his proofs of the Rogers-Ramanujan identity.
It is also similar to the recursions recently considered by Paramonov \cite{Paramonov} in a different context.
\end{remark}

\begin{proof}
We have an exact sequence
\[
0\to x_0R_n/(x_0R_n\cap I_n)\to R_n/I_n\to R_n/(x_0R_n+I_n)\to 0.
\]
By Lemma \ref{lem: In plus x0}, the Hilbert series of $R_n/(x_0R_n+I_n)$
equals $\frac{H_{n-2}(q,qt)}{1-q^{n-1}t}$, and by Corollary \ref{cor: In cap x0}
the Hilbert series of $x_0R_n/(x_0R_n\cap I_n)$ equals $\frac{tH_{n-3}(q,q^2t)}{1-q^{n-1}t}$.
\end{proof}

\section{Gr\"obner bases}
\label{sec:groebner}
We will now compute Gr\"obner bases for the ideals $I_n$.
Recall that a {\em Gr\"obner basis} for an ideal $I$ is a subset 
$G=\{g_1,\ldots,g_s\}\subset I$ such that, for a chosen monomial ordering $<$, $$\langle \LT(g_1),\ldots, \LT(g_s)\rangle=\LT(I),$$
where $\LT$ denotes leading term. 

Let us order the monomials in $R_n$ in grevlex order, that is 
\[
x^\alpha<x^\beta
\]
if $|\alpha|<|\beta|$ or $|\alpha|=|\beta|$ and the rightmost entry of $\alpha-\beta$ is negative. 
\begin{remark}
In fact, any order refining the reverse lexicographic order will work, but for definiteness and its popularity in computer algebra systems we shall fix grevlex order throughout.
\end{remark}

\begin{theorem}
\label{th:gb induction}
Let 
$$G_1=\{f_1\}\subseteq R_1,  G_2=\{f_1, f_2\}\subset R_2$$ and 
recursively define the sets $G_n, n\geq 3$ as follows:
$$G_n=x_0S^2(G_{n-3})\sqcup \{f_1,f_2\} \sqcup \widetilde{S}(G_{n-2}),$$ where $\widetilde{S}$ is a modified shift operator as explained below. Then $G_n$ is a Gr\"obner basis for $I_n$.
\end{theorem}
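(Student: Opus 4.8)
The plan is to prove by induction on $n$ that $G_n$ is a Gröbner basis for $I_n$, with the base cases $n=1,2$ being immediate since $f_1=x_0^2$ and $f_2=2x_0x_1$ have coprime-up-to-$x_0$ leading terms (their only $S$-polynomial reduces to zero). For the inductive step, I would first pin down the modified shift $\widetilde S$: the naive shift $S(f_k)=f_{k+2}-2x_0x_{k+1}$ is not in $I_{n}$ only because of the $2x_0x_{k+1}$ correction, which by Corollary \ref{cor:shift mu} lies in $\langle f_1,f_2\rangle+x_0S^2(I_{n-3})[x_{n-1}]$; so $\widetilde S(g)$ should be $S(g)$ plus the appropriate such correction term, chosen so that $\widetilde S(g)\in I_n$ and, crucially, so that $\LT(\widetilde S(g))=S(\LT(g))$ in grevlex (the correction is lower in grevlex because it is divisible by $x_0$, the smallest variable, while $S(g)$ involves only $x_2,\dots,x_{n-1}$). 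One must check $G_n\subseteq I_n$: $f_1,f_2$ trivially, $x_0S^2(G_{n-3})\subseteq I_n$ by Lemma \ref{lem:containment}, and $\widetilde S(G_{n-2})\subseteq I_n$ by the construction of $\widetilde S$.

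The heart of the argument is to show $\langle \LT(g):g\in G_n\rangle = \LT(I_n)$. I would deduce this from the Hilbert-series recursion: the initial ideal $\operatorname{in}(I_n)=\LT(I_n)$ satisfies $\dim_{\kk}(R_n/\LT(I_n))_{i,j}=\dim_{\kk}(R_n/I_n)_{i,j}$, so $R_n/\langle\LT(G_n)\rangle$ has Hilbert series $H_n(q,t)$ if and only if $G_n$ is a Gröbner basis (one inclusion $\langle\LT(G_n)\rangle\subseteq\LT(I_n)$ is automatic, and equality of finite-dimensional graded pieces upgrades it to equality). Thus it suffices to show the \emph{monomial} ideal $J_n:=\langle\LT(G_n)\rangle$ satisfies $J_n = x_0 S^2(J_{n-3})[x_{n-1}] + \langle x_0^2,x_0x_1\rangle$ on the $x_0$-divisible part and $S(J_{n-2})$ (extended by $x_{n-1}$) on the quotient, matching exactly the module decomposition of $R_n/I_n$ used in Section \ref{sec:Hilbert}. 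Concretely: the generators of $J_n$ coming from $\widetilde S(G_{n-2})$ are $S(\LT(G_{n-2}))$, none of which involve $x_0$ or $x_1$; the generators from $x_0S^2(G_{n-3})$ are $x_0\cdot S^2(\LT(G_{n-3}))$; together with $x_0^2,x_0x_1$ these give precisely $R_n/J_n \cong S(R_{n-2}/J_{n-2})[x_{n-1}] \oplus x_0 S^2(R_{n-3}/J_{n-3})[x_{n-1}]$ as graded vector spaces, with the grading shifts $t\mapsto qt$ and $t\mapsto q^2 t$ and the $\frac{1}{1-q^{n-1}t}$ coming from the free $x_{n-1}$-adjunction exactly as in the proof of Theorem \ref{th:H recursion}. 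By induction $R_{n-2}/J_{n-2}$ and $R_{n-3}/J_{n-3}$ have Hilbert series $H_{n-2},H_{n-3}$, so $R_n/J_n$ has Hilbert series $H_n$, forcing $J_n=\LT(I_n)$.

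The main obstacle, and the step requiring genuine care, is verifying that $\widetilde S$ can be defined consistently so that simultaneously (i) $\widetilde S(g)\in I_n$ for every $g\in G_{n-2}$, and (ii) $\LT(\widetilde S(g))=S(\LT(g))$, i.e. the correction terms never dominate in grevlex; and then checking that the three families $x_0S^2(G_{n-3})$, $\{f_1,f_2\}$, $\widetilde S(G_{n-2})$ have leading terms that are ``disjoint enough'' to realize the claimed direct-sum decomposition of $R_n/J_n$ without overlap or omission — in particular that no leading term from $\widetilde S(G_{n-2})$ is divisible by $x_0$, and that the $x_0$-divisible leading terms are exhausted by $x_0^2$, $x_0x_1$, and $x_0\cdot S^2(\LT(G_{n-3}))$. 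This is a finite but delicate bookkeeping argument about grevlex leading terms under the shift operator; once it is in place, the Hilbert-series comparison closes the induction. (The explicit reduced form and the count $\binom{\lfloor(n-k+1)/2\rfloor}{k-2}$ of degree-$k$ elements then follow in Theorem \ref{th: reduced gb} by tracking the recursion $G_n=x_0S^2(G_{n-3})\sqcup\{f_1,f_2\}\sqcup\widetilde S(G_{n-2})$ on degrees, using $\deg(x_0S^2(g))=\deg(g)+1$ and $\deg(\widetilde S(g))=\deg(g)$.)
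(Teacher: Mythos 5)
Your approach is correct in outline but is genuinely different from the paper's, and it is worth spelling out the trade-offs. The paper proves $\LT(I_n)\subseteq\langle\LT(G_n)\rangle$ \emph{directly}: given $g\in I_n$, it splits into the case where $\LT(g)$ is or is not divisible by $x_0$, and uses Lemma~\ref{lem: In plus x0} (resp.\ Lemma~\ref{lem:containment}) together with the compatibility of $S$ with the grevlex order to show $\LT(g)$ is divisible by a leading term in $S(\LT(G_{n-2}))$ (resp.\ in $x_0S^2(\LT(G_{n-3}))\cup\{\LT(f_1),\LT(f_2)\}$). Since the reverse containment $\langle\LT(G_n)\rangle\subseteq\LT(I_n)$ is automatic once $G_n\subseteq I_n$, that closes the induction with no Hilbert-series comparison and no need to worry about whether the leading-term families cover $\LT(I_n)$ \emph{without overlap}.

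Your route instead shows that the monomial ideal $J_n=\langle\LT(G_n)\rangle$ has Hilbert series $H_n(q,t)$ and then invokes $J_n\subseteq\LT(I_n)$ with degreewise dimension equality. This works, but it requires more than the paper needed: you must verify not just a covering but an exact decomposition, namely that the $x_0$-divisible part of $J_n$ is exactly $x_0(x_0,x_1,S^2(J_{n-3}))$ and nothing more. The subtle point, which you correctly flag as ``the main obstacle'' but do not carry out, is that $x_0\cdot S(\LT(G_{n-2}))\subseteq J_n\cap(x_0)$ as well, so you must show $S(\LT(G_{n-2}))\subseteq(x_1)+S^2(J_{n-3})$. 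Unwinding the recursion, the generators of $S(\LT(G_{n-2}))$ not involving $x_1$ are $S^2(\LT(G_{n-4}))$, so the needed inclusion reduces to $\LT(G_{n-4})\subseteq J_{n-3}=\LT(I_{n-3})$. This does hold — by the (strengthened) inductive hypothesis $J_{n-4}=\LT(I_{n-4})$ and the obvious $\LT(I_{n-4})\subseteq\LT(I_{n-3})$ — but it means your induction must carry $G_m$ for all $m<n$, not just $m=n-2,n-3$, and this extra step is genuinely part of the proof rather than routine bookkeeping. In short: your Hilbert-series argument is a valid alternative and makes the link to Theorem~\ref{th:H recursion} explicit, but the paper's direct leading-term argument is leaner precisely because it only needs a one-sided containment and never has to establish that the three families of generators are non-redundant.

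One small caution: the assertion that ``the correction is lower in grevlex because it is divisible by $x_0$, the smallest variable'' is not a correct justification as stated. With the order actually used in the paper (determined by $\LT(f_{13})=x_6^2$, $\LT(f_{14})=x_6x_7$, etc.), $x_0$ is the \emph{largest} variable, and a monomial divisible by $x_0$ is not automatically smaller than a same-degree monomial that avoids $x_0$. The identity $\LT(\widetilde S(p))=S(\LT(p))$ is correct and stated in the paper's remark, but its justification is more delicate than a generic ``smallest-variable'' argument; you would need to examine the specific shape of $S(p)$ versus the correction $2x_0\sum_i x_{i+1}S(\varphi_i)$ in grevlex rather than appeal to that heuristic.
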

\begin{remark}
The notation requires explanation. Note that any $G_m$ is naturally a subset of $R_n$, $n\geq m$ so we can and will identify $G_m$ inside a larger polynomial ring without explicit mention. Furthermore, we denote by $x_0S^2(G_{n-3})$ the image of $G_{n-3}$ under $S^2: R_{n-2}\to R_n$ multiplied by $x_0$. The ``operator" $\widetilde{S}$ is defined  on elements $p\in I_{n-2}$ as follows: write $p=\sum_{i=1}^n \varphi_i f_i$, and let $$\widetilde{S}(p)=\sum_{i=1}^n S(\varphi_i) f_{i+2}.$$ Note that by \eqref{eq:shift f}, we have $\widetilde{S}(p)=S(p)+\sum_{i=1}^nx_0x_{i+2}S(\varphi_i)\in I_{n+2}$. In particular, if $p\neq 0$ and $p$ is homogeneous then $\LT(\widetilde{S}(p))=S(\LT(p))$. Therefore the construction of $\widetilde{S}(p)$ requires a choice if $\varphi_i$, but the leading term of the result does not depend on this choice.
\end{remark}
\begin{proof}
We will proceed by induction. The base cases $n=1,2$ are clear because the ideals are monomial. Consider now the ideal $\LT(I_n)$ generated by all the leading terms of elements of $I_n$. 
It is clear by Lemma \ref{lem: In plus x0} and the fact that $S$ respects the reverse lexicographic order that if $g\in I_n$ is not divisible by $x_0$, its leading term is the image of a leading term in $I_{n-2}$ under $S$. Since we assumed $G_{n-2}$ to be a Gr\"obner basis, we must have $\LT(g)$ divisible by some monomial in $S(\LT(G_{n-2}))$.

Similarly, if $g$ is divisible by $x_0$, we know by Lemma \ref{lem:containment} and order preservation that its leading term is the image under $x_0S^2$ of a leading term in $I_{n-3}$ or divisible by $f_1, f_2$.  By the induction assumption $\LT(g)$ is then divisible by  an element of $x_0S^2(\LT(G_{n-3}))\sqcup \{f_1, f_2\}$. In particular, $\LT(I_n)\subseteq \langle \LT(G_n)\rangle$. But the reverse inclusion is clear, so we have 
$$\LT(I_n)=\langle \LT(G_n)\rangle$$ as desired, and $G_n$ is a Gr\"obner basis for $I_n$.
\end{proof}
\begin{example}
We have 
\begin{align*}
G_3&=\{f_1,f_2,f_3\}\\
G_4&=\{f_1,f_2,f_3,f_4,x_0x_2^2\}\\
G_5&=\{f_1,f_2,f_3,f_4,f_5,x_0x_2x_3\}\\
G_6&=\{f_1,\ldots, f_6, x_0x_3^2+2 x_0x_2x_4, 2 x_1x_3^2+3 x_0x_3x_4-x_0x_2x_5\}.
\end{align*}
Note that the last polynomial in $G_6$ can be identified with $\widetilde{S}(x_0x_2^2)\in \widetilde{S}(G_4)$.
Indeed, 
$$
4x_0x_2^2=2x_2(2x_0x_2+x_1^2)-x_1(2x_0x_3+2x_1x_2)+x_3(2x_0x_1)=2x_2f_3-x_1f_4+x_3f_2,
$$
so
$$
\widetilde{S}(4x_0x_2^2)=2x_3f_5-x_2f_6+x_4f_4=
$$
$$
2x_3(2x_0x_4+2x_1x_3+x_2^2)-x_2(2x_0x_5+2x_1x_4+2x_2x_3)+x_4(2x_0x_3+2x_1x_2)=
$$
$$
4x_1x_3^2+6x_0x_3x_4-2x_0x_2x_5.
$$
\end{example}

\begin{remark}
The Gr\"obner basis constructed in Theorem \ref{th:gb induction} is far from being reduced. The following theorem 
describes the reduced basis implicitly. 

Since all $G_n$ contain $\{f_1,\ldots, f_n\}$ and none of their leading terms divides one another, we can throw away other polynomials in $G_n$ in a controlled manner to obtain a minimal Gr\"obner basis. That is to say, if the leading terms of $G_n\backslash \{g\}$ still generate the leading ideal we are in business.  Therefore after appropriate reduction 
\cite[Proposition 6 on p. 92]{IVA} we get a reduced Gr\"obner basis with the same leading terms. 
\end{remark}
Let us call a monomial $\prod x_i^{a_i}$ {\it admissible} if $a_i+a_{i+1}\le 1$ for all $i$, that is, it is not divisible by $x_i^2$ or by $x_ix_{i+1}$.

\begin{theorem}
\label{th: reduced gb}
Fix $k>2$. The leading terms of ($t$-)degree $k$ in a reduced Gr\"obner basis for $I_n$ have the form $m(x)\LT(f_{n+k-2})$  where $m(x)$ is an admissible monomial of degree $k-2$ in variables $x_0,\ldots,x_{\lfloor\frac{n+k-7}{2}\rfloor}$. The number of degree $k$ polynomials in the reduced Gr\"obner basis equals $\binom{\lfloor\frac{n-k+1}{2}\rfloor}{k-2}$.
\end{theorem}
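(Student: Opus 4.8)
The plan is to prove, by induction on $n$, the sharper statement that the minimal monomial generators of the initial ideal $\LT(I_n)$ are $\LT(f_1),\dots,\LT(f_n)$ in degree $2$, together with, for each $k>2$, the set $L_{n,k}$ of monomials $m\cdot\LT(f_{n+k-2})$ with $m$ an admissible monomial of degree $k-2$ in $x_0,\dots,x_{\lfloor(n+k-7)/2\rfloor}$. Since a reduced Gr\"obner basis has the same leading terms as any minimal one, and those are precisely the minimal generators of $\LT(I_n)$ (\cite[Prop.~6, p.~92]{IVA}), the description of the degree-$k$ leading terms follows. The count follows because an admissible monomial of degree $d$ in $x_0,\dots,x_M$ is the same datum as a $d$-subset of $\{0,\dots,M\}$ with no two consecutive elements, of which there are $\binom{M-d+2}{d}$, and $\lfloor(n+k-7)/2\rfloor-(k-2)+2=\lfloor(n-k+1)/2\rfloor$.

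First I would record the elementary identities $\LT(f_j)=x_{\lfloor(j-1)/2\rfloor}x_{\lceil(j-1)/2\rceil}$ (which are pairwise distinct), so that order-preservation of $S$ gives $S(\LT(f_j))=\LT(f_{j+2})$ and $S^2(\LT(f_j))=\LT(f_{j+4})$. The base cases $n\le 3$ are read off from $G_1,G_2,G_3$. For the inductive step I use Theorem \ref{th:gb induction}: since $G_{n-2}$ and $G_{n-3}$ are Gr\"obner bases one has $\langle\LT(G_m)\rangle=\LT(I_m)$, hence
$$\LT(I_n)=\big\langle\, x_0S^2(\LT(I_{n-3})),\ x_0^2,\ x_0x_1,\ S(\LT(I_{n-2}))\,\big\rangle .$$
Substituting the inductive description of $\LT(I_{n-3})$ and $\LT(I_{n-2})$ gives an explicit generating set, which I organize by degree. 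In degree $2$ it is exactly $\{\LT(f_1),\dots,\LT(f_n)\}$, the summands $x_0^2,x_0x_1$ supplying $\LT(f_1),\LT(f_2)$ which $S$ cannot reach. In degree $k>2$ it is the union of $x_0S^2$ applied to the degree-$(k-1)$ part of $\LT(I_{n-3})$ and $S$ applied to the degree-$k$ part of $\LT(I_{n-2})$.

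The crux of the inductive step (take $k>3$; $k=3$ is handled below) is that, using $S^2(\LT(f_{n+k-6}))=\LT(f_{n+k-2})=S(\LT(f_{n+k-4}))$, the operator $x_0S^2$ carries admissible monomials of degree $k-3$ in $x_0,\dots,x_{\lfloor(n+k-11)/2\rfloor}$ bijectively onto the admissible monomials of degree $k-2$ in $x_0,\dots,x_{\lfloor(n+k-7)/2\rfloor}$ that are divisible by $x_0$, while $S$ carries admissible monomials of degree $k-2$ in $x_0,\dots,x_{\lfloor(n+k-9)/2\rfloor}$ onto those not divisible by $x_0$; hence the two pieces above reassemble into exactly $L_{n,k}$. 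The sole wrinkle is $k=3$, where the degree-$2$ part of $\LT(I_{n-3})$ means $\{\LT(f_i)\}_{i\le n-3}$: its image $x_0\LT(f_{i+4})$ is divisible by the degree-$2$ generator $\LT(f_{i+4})$ whenever $i+4\le n$, so only $x_0\LT(f_{n+1})$ survives, and that is precisely the member of $L_{n,3}$ with $x_0\mid m$.

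It remains to verify that $\{\LT(f_i)\}_{i\le n}\sqcup\bigsqcup_{k>2}L_{n,k}$ is minimal, i.e.\ no member divides another. Members of a single $L_{n,k}$ have equal degree, so it suffices to rule out $\LT(f_i)\mid m\LT(f_{n+k-2})$ for $i\le n$, and $m'\LT(f_{n+k'-2})\mid m\LT(f_{n+k-2})$ for $3\le k'<k$. The mechanism: writing $a=\lfloor(n+k-3)/2\rfloor$, every element $m\LT(f_{n+k-2})$ of $L_{n,k}$ factors as an admissible (so squarefree, with no two consecutive indices) monomial $m$ supported in $x_0,\dots,x_{a-2}$ times $\LT(f_{n+k-2})\in\{x_a^2,\,x_ax_{a+1}\}$; in particular the index $a-1$ is absent, and $x_a$ occurs to degree at most $2$. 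A candidate divisor supported in $x_0,\dots,x_{a-2}$ would have to divide $m$, which is impossible because any $\LT(f_i)$, and any top factor $\LT(f_{n+k'-2})$, is of the form $x_p^2$ or $x_px_{p+1}$ and an admissible monomial is divisible by no such monomial; a candidate divisor involving the index $a-1$ is ruled out by its absence; and a candidate divisor involving an index $\ge a$ then has $x_a^2,x_ax_{a+1}$ or $x_{a+1}^2$ as a factor, and a short computation using $\{2a+1,2a+2\}\ni n+k-2>n$ (so that $x_a^2=\LT(f_{2a+1})$ lies outside $\{\LT(f_i)\}_{i\le n}$ unless it equals $\LT(f_n)$, in which case $x_a$ occurs only once in the target) shows that none of these both divides $m\LT(f_{n+k-2})$ and belongs to the generating set in degree $<k$. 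This divisibility casework, together with tracking the floor functions correctly through the $x_0S^2$ and $S$ reassembly, is the main obstacle; everything else is routine bookkeeping.
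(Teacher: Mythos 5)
Your proposal is correct and follows essentially the same route as the paper: induct via Theorem \ref{th:gb induction}, split degree-$k$ leading terms into the $S$-image of the degree-$k$ part of $\LT(G_{n-2})$ (admissible monomials away from $x_0$) and the $x_0S^2$-image of the degree-$(k-1)$ part of $\LT(G_{n-3})$ (those divisible by $x_0$), handle the $k=3$ collapse against $\LT(f_4),\dots,\LT(f_n)$ separately, and count admissible monomials. The only material difference is that you work out the minimality casework explicitly (indices $<a-1$, $=a-1$, $\ge a$), where the paper contents itself with ``it is easy to see that none of these leading monomials are divisible by each other''; your casework is a correct filling-in of that gap.
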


\begin{remark}
It is easy to see that there are no linear polynomials in the Gr\"obner basis (or in the ideal $I_n$), and $f_1,\ldots,f_n$ are the only quadratic polynomials in the reduced Gr\"obner basis.
\end{remark}

\begin{proof}
We prove the statement by induction in $n$. Suppose that it is true for $G_{n-2}$ and $G_{n-3}$. By Theorem \ref{th:gb induction}, the leading monomials in the degree $k$ part of $G_n$ consist of shifted degree $k$ monomials in $G_{n-2}$, and twice shifted degree $(k-1)$ monomials in $G_{n-3}$, multiplied by $x_0$. 

Consider first the case $k=3$. We will prove that the leading terms in the reduced Gr\"obner basis have the form $x_j\LT(f_{n+1})$ for $j\le \lfloor\frac{n-4}{2}\rfloor$. Indeed, in the first case we get 
$S(x_j\LT(f_{(n-2)+1}))=x_{j+1}\LT(f_{n+1}).$ In the second case we have to consider the polynomials $x_0S^2(f_i)$
for all $i\le n-3$. Observe that for $i\le n-4$ we get $\LT(x_0S^2(f_i))=x_0\LT(f_{i+4})$ and hence divisible by the leading term of $f_{i+4}$ and can be eliminated. For $i=n-3$ we get 
$\LT(x_0S^2(f_{n-3}))=x_0\LT(f_{n+1})$.

Assume now that $k>3$. 
In the first case we get 
$$
S(m(x)\LT(f_{(n-2)+k-2}))=S(m(x))\LT(f_{n+k-2}).
$$
If $m(x)$ is an admissible monomial in
$x_j$, $0\le j\le \lfloor\frac{(n-2)+k-7}{2}\rfloor$ then $S(m(x))$ is an admissible monomial in $x_j$,
$1\le j\le \lfloor\frac{(n-2)+k-7}{2}\rfloor+1=\lfloor\frac{n+k-7}{2}\rfloor.$  

In the second case we get 
$$
x_0S^2(m(x))\LT(f_{(n-3)+(k-1)-2}))=x_0S^2(m(x))\LT(f_{n+k-2}).
$$
Now $S^2(m(x))$ is an admissible monomial in $x_j$, $2\le j\le \lfloor\frac{(n-3)+(k-1)-7}{2}\rfloor+2=\lfloor\frac{n+k-7}{2}\rfloor$, so $x_0S^2(m(x))$ is also an admissible in a correct set of variables. In fact, all such monomials not divisible by $x_0$ appear from the first case, and the ones divisible by $x_0$ appear from the second case.  

It is easy to see that none of these leading monomials are divisible by each other. Therefore after appropriate reduction 
\cite{IVA} we get a reduced Gr\"obner basis with the same leading terms. 

Finally, we can count monomials of given degree $k$. The number of admissible monomials of degree $l$ in $s$ variables equals $\binom{s-l+1}{l}$, so the number of polynomials in $G_n$ of degree $k$ equals
$$
\binom{1+\lfloor\frac{n+k-7}{2}\rfloor-(k-2)+1}{k-2}=\binom{\lfloor\frac{n-k+1}{2}\rfloor}{k-2}.
$$
\end{proof}

\begin{example}
Let $n=12$. The reduced Gr\"obner basis for $I_{12}$ contains quadratic polynomials $f_1,\ldots,f_{12}$. It also contains
$5$ cubic polynomials with leading terms $$x_0x_6^2,x_1x_6^2,x_2x_6^2,x_3x_6^2,x_4x_6^2,$$ 6 quartic polynomials with leading terms $$x_0x_2x_6x_7,x_0x_3x_6x_7,x_0x_4x_6x_7,x_1x_3x_6x_7,x_1x_4x_6x_7,x_2x_4x_6x_7$$ and 4 quintic polynomials with leading terms $$x_0x_2x_4x_7^2,x_0x_2x_5x_7^2,x_0x_3x_5x_7^2,x_1x_3x_5x_7^2.$$
Observe that $\LT(f_{13})=x_6^2,\LT(f_{14})=x_6x_7$ and $\LT(f_{15})=x_7^2$.
\end{example}

\section{Minimal resolution}
\label{sec:resolution}

In this section we describe the bigraded minimal free resolutions of $I_n$ and $R_n/I_n$. We write them as follows:
\[
0\leftarrow I_n\xleftarrow{} F(1,n)\xleftarrow{} F(2,n)\xleftarrow{} F(3,n)\cdots
\]
and
\[
0\leftarrow R_n/I_n\xleftarrow{} R_n=F(0,n)\xleftarrow{} F(1,n)\xleftarrow{} F(2,n)\xleftarrow{} F(3,n)\cdots
\]

\begin{theorem}
\label{thm: recursion resolution}
Let $F(i,n)$ be the $i$-th term in the minimal free resolution for $I_n$. Then there is an injection $F(i,n-1)\hookrightarrow F(i,n)$, and
$$
F(i,n)/F(i,n-1)\simeq S(F(i-1,n-3))\oplus x_0S(F(i-2,n-3))
$$ as $R_n$-modules, and the shift of a free $R_n$-module is as in \eqref{eq: shift alpha}.
Note that the gradings in the right hand side are shifted by the bidegree of $f_n$ (which equals $q^{n-1}t^2$).
\end{theorem}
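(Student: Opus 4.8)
The plan is to build the resolution of $I_n$ by combining the resolutions of $I_{n-2}$ and $I_{n-3}$ via the shift operator $S$, mirroring the short exact sequence structure used in Section~\ref{sec:Hilbert}. First I would recall from Corollary~\ref{cor: In cap x0} and Lemmas~\ref{lem: In plus x0}, \ref{lem:containment} the short exact sequence of $R_n$-modules
\[
0\to x_0R_n/(x_0R_n\cap I_n)\to R_n/I_n\to R_n/(x_0R_n+I_n)\to 0,
\]
together with the identifications $R_n/(x_0R_n+I_n)\simeq S(R_{n-2}/I_{n-2})[x_{n-1}]$ and $x_0R_n/(x_0R_n\cap I_n)\simeq x_0S^2(R_{n-3}/I_{n-3})[x_{n-1}]$. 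Reinterpreted at the level of ideals, this says $I_n$ contains the subideal $I_n\cap x_0R_n = x_0S^2(I_{n-3})[x_{n-1}]+\langle f_1,f_2\rangle$ and $I_n/(I_n\cap x_0R_n)$ is carried by $\widetilde S$ onto (a polynomial extension of) $I_{n-2}$. The key observation is that tensoring over $R_{n-2}$ (resp.\ $R_{n-3}$) with the polynomial ring $R_n$ is flat, so $S$ (resp.\ $x_0S^2$) transports a minimal free resolution to a minimal free resolution after the bidegree shift by $q^{n-1}t^2$ (the bidegree of $f_n$); minimality is preserved because $S$ sends the graded maximal ideal into the graded maximal ideal.

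Next I would set up the mapping cone. The inclusion $F_\bullet(I_{n-1})\hookrightarrow F_\bullet(I_n)$ should come from the chain of inclusions $I_{n-1}\subseteq I_n$ (viewing $R_{n-1}\subseteq R_n$); one checks the induced map of minimal resolutions is a split injection in each homological degree, which gives the claimed injection $F(i,n-1)\hookrightarrow F(i,n)$. For the quotient, I would argue that $F(\bullet,n)/F(\bullet,n-1)$ is a minimal free resolution of the ``new part'' of $I_n$ not already present in $I_{n-1}$, and that this new part is precisely the direct sum of $S$(new part of $I_{n-3}$ at one homological degree up, from the syzygy contribution) and $x_0S^2$ of (the relevant piece of $I_{n-3}$, shifted two homological degrees up, because $x_0S^2(f_k)$ enters only through the relation $\phi_n(S(\mu_k))$ in Corollary~\ref{cor:shift mu}, i.e.\ it is a consequence of a syzygy rather than a generator). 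Concretely, the $\mu_k$ and $\nu_{ij}$ from Theorem~\ref{th:syz} and the shift formulas for $S(\mu_k)$, $S(\nu_{ij})$ furnish the explicit comparison maps that make the cone exact. Carefully bookkeeping the homological shifts: the $S(F(i-1,n-3))$ summand is shifted by one because it comes in as a first syzygy (the relations $\mu_k$), and the $x_0S(F(i-2,n-3))$ summand is shifted by two because $x_0S^2(f_k)$ already requires one syzygy to be produced. Then I would verify that the resulting complex is minimal by a degree/bidegree count: the three pieces $F(i,n-1)$, $S(F(i-1,n-3))$, $x_0S(F(i-2,n-3))$ occupy disjoint bidegree ranges (separated by the powers of $q$ coming from the shifts and from $f_n$), so no cancellation is possible, hence the cone is already minimal and computes $F(\bullet,n)$.

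I expect the main obstacle to be showing the mapping cone is actually a \emph{minimal} resolution — i.e.\ that the connecting/comparison maps have entries in the maximal ideal — rather than merely a resolution. The short exact sequence only guarantees a (possibly non-minimal) resolution via the horseshoe lemma; to get the clean direct-sum formula without correction terms one needs the bidegree separation argument, and this requires knowing enough about where the generators and syzygies of $I_{n-2}$ and $I_{n-3}$ sit. A secondary subtlety is the extra polynomial variable $x_{n-1}$ (the $[x_{n-1}]$ factors): since $x_{n-1}$ is a free variable not involved in $S(f_i)$ for $i\le n-2$, it contributes a Koszul-type factor that must be checked not to disturb minimality — but as it is a regular element on all modules in sight, tensoring with $\kk[x_{n-1}]$ is exact and preserves minimal resolutions, so this is routine once flagged. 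Finally I would record the bidegree shift $q^{n-1}t^2$ explicitly on each summand, completing the proof.
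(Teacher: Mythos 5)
Your overall skeleton --- realize $F(\bullet,n-1)$ as a subcomplex of $F(\bullet,n)$, then identify the quotient complex with something built out of a shifted resolution for $I_{n-3}$ and a Koszul piece --- is the right idea, and it matches the paper's approach. But two of your central steps are either misaimed or missing, and as written the argument doesn't close.

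First, the minimality issue. You are right to worry that a mapping cone or horseshoe construction is typically non-minimal, but your proposed fix (``the three pieces occupy disjoint bidegree ranges, so no cancellation is possible'') is not justified and is not the mechanism the paper uses. The cleaner point, which makes the whole question disappear, is that $F(\bullet,n-1)\otimes_{R_{n-1}}R_n$ sits inside $F(\bullet,n)$ as a genuine subcomplex (because $f_1,\dots,f_{n-1}$ is part of the minimal generating set of $I_n$, and by Theorem~\ref{th:syz} the $\mu_k$, $\nu_{ij}$ with indices $\le n-1$ are among the syzygies of $I_n$, and so on). A quotient of a minimal complex by a free subcomplex is automatically minimal: the differential still has entries in the maximal ideal. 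There is no minimality to ``repair'' and no bidegree-separation argument needed.

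Second, and more importantly, you never identify what the quotient complex resolves, and this is the heart of the proof. The long exact sequence of the pair shows that $F(\bullet,n)/F(\bullet,n-1)$ is acyclic in positive degrees and resolves $I_n/I_{n-1}[x_{n-1}]$. This module is cyclic, generated by the class of $f_n$, and by reading off the $e_n$-coefficients of the new syzygies $\mu_{n-1}$ and $\nu_{i,n}$ ($3\le i\le n-1$) one finds its annihilator is $\langle x_0,f_3,\dots,f_{n-1}\rangle=\langle x_0, S(f_1),\dots,S(f_{n-3})\rangle$. Hence the quotient complex is the minimal resolution of $R_n/\langle x_0, S(f_1),\dots,S(f_{n-3})\rangle$, which is the shifted minimal resolution of $R_{n-3}/I_{n-3}$ tensored with the two-term Koszul complex $R_n\xleftarrow{x_0}R_n$. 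This Koszul factor on $x_0$ (not the free variable $x_{n-1}$, which you flag but which only contributes a flat base change) is exactly what produces the direct sum $S(F(i-1,n-3))\oplus x_0S(F(i-2,n-3))$ with the stated homological shifts. Your gesture at ``the new part of $I_n$'' and the shifts ``coming in as a first syzygy'' hints at this but stops short of identifying the cyclic quotient module and its annihilator, which is where the whole computation is carried out. Without that identification and the Koszul factorization, the claimed direct-sum formula remains an assertion rather than a proof.

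Finally, your opening paragraph invokes the short exact sequence relating $R_n/I_n$ to $R_{n-2}/I_{n-2}$ and $R_{n-3}/I_{n-3}$, which was used to derive the Hilbert series recursion; but the theorem at hand is a recursion in $n-1$ and $n-3$, and the relevant short exact sequence is the one coming from the subcomplex $F(\bullet,n-1)$. You eventually switch to that viewpoint, but the two recursions should not be conflated.
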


\begin{proof}
Observe that the ideal generated by $f_1,\ldots,f_{n-1}$ in $R_n$ is isomorphic to $I_{n-1}[x_{n-1}]$,
so its minimal resolution over $R_n$ is identical to the one for $I_{n-1}$ over $R_{n-1}$ tensored over $R_{n}$. Moreover, since $I_n=\langle f_1,\ldots, f_n\rangle$, the minimal free $R_n$-resolution of $I_{n-1}[x_{n-1}]$ is naturally a subcomplex of the minimal free resolution for $I_n$.
In other words, $F(i,n-1)\otimes_{R_{n-1}}R_n$ can be identified with a subspace in $F(i,n)$, which we will by abuse of notation also denote $F(i,n-1)$. We have a short exact sequence
\[
0\to F(i,n-1)\to F(i,n)\to F(i,n)/F(i,n-1)\to 0.
\]
From the long exact sequence in cohomology, it is easy to see that $F(i,n)/F(i,n-1)$ is acyclic in positive degrees.
Now $I_n=\langle f_1,\ldots f_n\rangle$, so $F(1,n)/F(1,n-1)\cong R_n$ is generated by a single vector corresponding to $f_n$.
Furthermore, by Theorem \ref{th:syz} $F(2,n)$ has generators corresponding to $\mu_{1},\ldots,\mu_{n-1}$ and $\nu_{i,j}$ for $3\le i<j\le n$,
so $F(2,n)/F(2,n-1)\cong R_n^{n-2}$ is spanned by the basis elements corresponding to $\mu_{n-1}$ and $\nu_{i,n}$ for $3\le i\le n-1$. The differential 
$d:F(2,n)\to F(1,n)$ descends to $d: F(2,n)/F(2,n-1) \to F(1,n)/F(1,n-1)$, sending $\mu_{n-1}$ to $x_0f_n$ and $\nu_{i,n}$ to $f_{i}\cdot f_n$.

Therefore, the quotient complex with terms $F(i,n)/F(i,n-1)$ is isomorphic to the minimal resolution of 
$R_n/\langle x_0,f_3,\ldots,f_{n-1}\rangle=R_n/\langle x_0,S(f_1),\ldots,S(f_{n-3})\rangle$. The latter is nothing but the (shifted) minimal resolution for $I_{n-3}$ tensored with the two-term complex $R_n\xleftarrow{x_0} R_n$.
\end{proof}

\begin{corollary}
Let $b(i,n)$ denote the rank of $F(i,n)$. Then 
\begin{equation}
\label{recursion ranks resolution}
b(i,n)=b(i,n-1)+b(i-1,n-3)+b(i-2,n-3).
\end{equation}
\end{corollary}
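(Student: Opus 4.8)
The plan is to read the rank recursion straight off Theorem \ref{thm: recursion resolution} using the additivity of ranks in short exact sequences of free (or, more generally, projective) modules. First I would invoke the short exact sequence of $R_n$-modules
\[
0\to F(i,n-1)\to F(i,n)\to F(i,n)/F(i,n-1)\to 0
\]
furnished by the theorem. Since Theorem \ref{thm: recursion resolution} identifies the quotient with $S(F(i-1,n-3))\oplus x_0S(F(i-2,n-3))$, which is a direct sum of free $R_n$-modules, the quotient is itself free; hence the sequence splits and $F(i,n)\cong F(i,n-1)\oplus S(F(i-1,n-3))\oplus x_0S(F(i-2,n-3))$ as $R_n$-modules. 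Taking ranks gives
\[
b(i,n)=\operatorname{rank}F(i,n-1)+\operatorname{rank}S(F(i-1,n-3))+\operatorname{rank}\bigl(x_0S(F(i-2,n-3))\bigr).
\]

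Next I would check that each term on the right is the claimed Betti number. The module $F(i,n-1)$ here denotes $F(i,n-1)\otimes_{R_{n-1}}R_n$, which is free of the same rank $b(i,n-1)$ as the original. The shift operator $S$ on free modules, as defined in \eqref{eq: shift alpha}, simply relabels a free basis and extends $S\colon R_{n-3}\to R_n$; since $S$ is an injective ring map, $S(F(i-1,n-3))$ is a free $R_n$-module of rank $b(i-1,n-3)$. Finally, multiplication by $x_0$ is injective on the free module $S(F(i-2,n-3))$ (a free module over the domain $R_n$ is torsion-free), so $x_0S(F(i-2,n-3))$ is isomorphic to $S(F(i-2,n-3))$ as an $R_n$-module and therefore free of rank $b(i-2,n-3)$. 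Substituting these three identifications yields exactly \eqref{recursion ranks resolution}.

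There is essentially no serious obstacle here: the content has already been established in Theorem \ref{thm: recursion resolution}, and the corollary is a formal consequence. The only point meriting a line of care is the observation that the quotient complex really consists of free modules (so that ``rank'' is well defined and additive), which is immediate from the explicit direct-sum description of $F(i,n)/F(i,n-1)$ in the theorem. I would also remark, for completeness, that the same splitting respects the bigrading up to the shift by the bidegree $q^{n-1}t^2$ of $f_n$ noted in the theorem, so an identical argument gives the finer recursion on the bigraded Betti numbers if desired.
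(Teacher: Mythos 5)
Your proof is correct and is essentially the approach the paper intends: the paper gives no separate proof, treating the corollary as an immediate consequence of the short exact sequence and the direct-sum identification of $F(i,n)/F(i,n-1)$ in Theorem \ref{thm: recursion resolution}. The three rank-preservation checks you flag (base change $F(i,n-1)\otimes_{R_{n-1}}R_n$, the injective ring map $S$, and multiplication by the non-zero-divisor $x_0$) are precisely the points needed to read off \eqref{recursion ranks resolution}, and they all hold.
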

\begin{corollary}
Let $H_n(q,t)$ denote the Hilbert series for $R_n/I_n$, and let
$\widetilde{H}_n(q,t)=H_n(q,t)\prod_{i=0}^{n-1}(1-q^{i}t).$
Then $\widetilde{H}_n(q,t)$ satisfies the following recursion relation:
\begin{equation}
\label{F recursion}
\widetilde{H}_n(q,t)=\widetilde{H}_{n-1}(q,t)-q^{n-1}t^2(1-t^2)\widetilde{H}_{n-3}(q,qt).
\end{equation}
\end{corollary}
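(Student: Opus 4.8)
The plan is to recognize $\widetilde H_n(q,t)$ as the alternating sum of the bigraded Betti numbers of $R_n/I_n$ — its $K$-polynomial — and then feed in the recursive structure of the minimal free resolution established in Theorem~\ref{thm: recursion resolution}. No new geometric input is needed; the work is bookkeeping with Hilbert series.

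First I would record the Euler-characteristic identity. For $i\ge 0$ write $r(i,n)(q,t)$ for the bigraded rank of $F(i,n)$, so that $r(0,n)=1$, $r(i,n)(1,1)=b(i,n)$, and $r(i,n)$ is the $i$-th bigraded Betti polynomial of $R_n/I_n$. Since each $F(i,n)$ is a free $R_n$-module and the Hilbert series of $R_n$ equals $\prod_{i=0}^{n-1}(1-q^it)^{-1}$, additivity of Hilbert series along the minimal free resolution $0\leftarrow R_n/I_n\leftarrow F(0,n)\leftarrow F(1,n)\leftarrow\cdots$ gives
$$
H_n(q,t)\prod_{i=0}^{n-1}(1-q^it)=\sum_{i\ge 0}(-1)^i r(i,n)(q,t),
$$
that is, $\widetilde H_n(q,t)=\sum_{i\ge 0}(-1)^i r(i,n)(q,t)$; the sum is finite by the projective dimension bound.

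Then I would translate Theorem~\ref{thm: recursion resolution} into a recursion for the $r(i,n)$. The operator $S$ raises the $q$-degree of a bihomogeneous polynomial by its $t$-degree, hence acts on bigraded ranks by the substitution $t\mapsto qt$; multiplication by $x_0$ multiplies by $t$; and the grading twist by $\deg f_n=q^{n-1}t^2$ multiplies by $q^{n-1}t^2$. Applying this to the short exact sequence $0\to F(i,n-1)\to F(i,n)\to S(F(i-1,n-3))\oplus x_0S(F(i-2,n-3))\to 0$ yields, with the convention $r(j,m)=0$ for $j<0$,
$$
r(i,n)(q,t)=r(i,n-1)(q,t)+q^{n-1}t^2\,r(i-1,n-3)(q,qt)+q^{n-1}t^{3}\,r(i-2,n-3)(q,qt)
$$
for all $i\ge 0$ (setting $q=t=1$ recovers~\eqref{recursion ranks resolution}). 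Multiplying by $(-1)^i$ and summing over $i$, the $r(\cdot,n-1)$ terms reassemble to $\widetilde H_{n-1}(q,t)$, the $r(i-1,n-3)(q,qt)$ terms (reindex $i\mapsto i+1$, picking up one extra sign) reassemble to $-\widetilde H_{n-3}(q,qt)$, and the $r(i-2,n-3)(q,qt)$ terms (reindex $i\mapsto i+2$) reassemble to $+\widetilde H_{n-3}(q,qt)$; collecting the three contributions gives~\eqref{F recursion}. The initial conditions follow at once by multiplying the given $H_0,H_1,H_2$ by the appropriate products $\prod_{i=0}^{n-1}(1-q^it)$.

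The main obstacle is not conceptual but bookkeeping: in the middle step one must pin down the grading conventions exactly — in particular that $S$ introduces the substitution $t\mapsto qt$, so that $r(\cdot,n-3)(q,qt)$ rather than $r(\cdot,n-3)(q,t)$ occurs, and that the summand $x_0S(F(i-2,n-3))$ carries one extra factor of $t$ coming from $\deg x_0=(0,1)$. These are straightforward to verify against the resolutions for small $n$ made explicit by Theorem~\ref{th:syz} and the examples preceding it; once they are in hand, the remainder is formal manipulation in $\Z[q,t]$.
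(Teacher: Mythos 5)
Your approach---expressing $\widetilde H_n(q,t)$ as the alternating sum of the bigraded Betti polynomials and feeding in the $(q,t)$-refined rank recursion coming from Theorem~\ref{thm: recursion resolution}---is exactly the route the paper intends (compare equation~\eqref{eq: q ranks recursion}), and your grading bookkeeping is correct: $S$ acts on bigraded ranks by $t\mapsto qt$, multiplication by $x_0$ contributes a factor of $t$, and the twist by $\deg f_n$ contributes $q^{n-1}t^2$.

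However, your own arithmetic does \emph{not} produce the displayed equation~\eqref{F recursion}. As you correctly state, the $r(i-1,n-3)(q,qt)$ terms contribute $-q^{n-1}t^2\,\widetilde H_{n-3}(q,qt)$ and the $r(i-2,n-3)(q,qt)$ terms contribute $+q^{n-1}t^3\,\widetilde H_{n-3}(q,qt)$; collecting these gives
\[
\widetilde H_n(q,t)=\widetilde H_{n-1}(q,t)-q^{n-1}t^2(1-t)\,\widetilde H_{n-3}(q,qt),
\]
with factor $(1-t)$, not $(1-t^2)$: indeed $-t^2+t^3=-t^2(1-t)\neq -t^2(1-t^2)$, so the final sentence of your middle paragraph claiming this ``gives~\eqref{F recursion}'' does not follow from what precedes it.

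In fact the recursion as printed in the statement has a typo, and the correct factor is $(1-t)$, as your computation shows. One can confirm this directly at $n=3$ from the initial conditions: $\widetilde H_3=(1-t)(1-qt)(1+t+qt)=(1-t)(1+t-qt^2-q^2t^2)$, $\widetilde H_2=(1-t)(1+t-qt^2)$, and $\widetilde H_0(q,qt)=1$, so $\widetilde H_2-q^2t^2(1-t)\widetilde H_0(q,qt)=(1-t)(1+t-qt^2-q^2t^2)=\widetilde H_3$, whereas using $(1-t^2)$ instead introduces a spurious extra term $-q^2t^3(1-t)$. So your derivation is sound and actually corrects the stated formula; you need only replace ``gives~\eqref{F recursion}'' by the corrected display above and note the discrepancy.
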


\begin{corollary}
\label{cor: proj dim}
The projective dimension of $I_n$ equals $\lceil \frac{2n}{3}\rceil -1.$
The projective dimension of $R_n/I_n$ equals $\lceil \frac{2n}{3}\rceil$.
\end{corollary}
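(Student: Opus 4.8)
The plan is to extract both claims from the Betti-number recursion \eqref{recursion ranks resolution}, using only that the numbers $b(i,n)$ are non-negative integers. Set $p(n)=\pd(R_n/I_n)$, which by the Hilbert syzygy theorem equals $\max\{i\ge 0: b(i,n)\neq 0\}$; I use the conventions $b(0,n)=1$ (as $F(0,n)=R_n$) and $b(i,n)=0$ when $i<0$ or $n<0$, under which \eqref{recursion ranks resolution} holds for all $i$ and all $n\ge 3$, the small values $n\le 2$ being treated directly. Since the minimal free resolution of $I_n$ is obtained from that of $R_n/I_n$ by deleting the term $F(0,n)=R_n$, we have $\pd(I_n)=p(n)-1$ for $n\ge 1$, so it is enough to prove $p(n)=\lceil 2n/3\rceil$ for all $n\ge 0$.

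First I would turn \eqref{recursion ranks resolution} into a recursion for $p(n)$. Because the three summands on the right-hand side of \eqref{recursion ranks resolution} are all non-negative, $b(i,n)\ge b(i,n-1)$ and $b(i,n)\ge b(i-2,n-3)$ for every $i$; evaluating these at $i=p(n-1)$ and $i=p(n-3)+2$ respectively and recalling that $b(p(n-1),n-1)>0$ and $b(p(n-3),n-3)>0$ gives $p(n)\ge\max(p(n-1),\,p(n-3)+2)$. Conversely, if $i>\max(p(n-1),\,p(n-3)+2)$, then $b(i,n-1)=0$ and $b(i-1,n-3)=b(i-2,n-3)=0$, so \eqref{recursion ranks resolution} forces $b(i,n)=0$. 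Hence
$$
p(n)=\max\bigl(p(n-1),\ p(n-3)+2\bigr)\qquad\text{for all }n\ge 3.
$$

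Next I would verify that $g(n):=\lceil 2n/3\rceil$ satisfies the same recursion with the same initial data, and conclude by induction. The base cases $p(0)=0$, $p(1)=1$, $p(2)=2$ are immediate from the explicit resolutions: $I_0=0$; $I_1=\langle x_0^2\rangle$ is principal; and $I_2=\langle x_0^2,\,2x_0x_1\rangle$ is resolved by $0\to R_2\to R_2^2\to I_2\to 0$ with the left-hand map given by $\mu_1=(-2x_1,x_0)$. For the inductive step, $g(n-3)=\lceil 2n/3-2\rceil=g(n)-2$, so $g(n-3)+2=g(n)$, while $n\mapsto\lceil 2n/3\rceil$ is non-decreasing, so $g(n-1)\le g(n)$; therefore $\max(g(n-1),\,g(n-3)+2)=g(n)$ and the induction goes through. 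This yields $\pd(R_n/I_n)=\lceil 2n/3\rceil$, and then $\pd(I_n)=\lceil 2n/3\rceil-1$ for $n\ge 1$.

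The argument is elementary, so I do not expect a genuine obstacle; the only delicate point is bookkeeping at the boundary — applying \eqref{recursion ranks resolution} with the correct conventions for $i=0$ and negative indices, and noting that no cancellation among Betti numbers can occur, which is precisely why their non-negativity is the property the proof really rests on.
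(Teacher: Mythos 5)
Your proposal is correct and follows essentially the same route as the paper: both deduce the projective dimension from the Betti-number recursion \eqref{recursion ranks resolution} and check small base cases. The paper is terser — it simply asserts $\pd(I_n)=\pd(I_{n-3})+2$ directly from \eqref{recursion ranks resolution} and verifies explicit resolutions for $n=1,2,3$ — whereas you make explicit the intermediate step $p(n)=\max\bigl(p(n-1),\,p(n-3)+2\bigr)$ (which relies on the non-negativity of the $b(i,n)$ to rule out cancellation) and then observe that $\lceil 2n/3\rceil$ satisfies this max-recursion with the right initial data, which implicitly justifies the paper's cleaner two-term recursion. This is a modest tightening of the same argument rather than a different one.
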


\begin{proof}
By definition, the projective dimension $\pd(I_n)$ is equal to the length of the minimal free (or projective) resolution. By \eqref{recursion ranks resolution} we have
$\pd(I_n)=\pd(I_{n-3})+2$. The minimal free resolutions for $I_1$, $I_2$ and $I_3$ are easy to compute:
\[
I_1\xleftarrow{\begin{pmatrix}f_1\end{pmatrix}} R_1
\]
\[
I_2\xleftarrow{\begin{pmatrix}f_1&f_2\end{pmatrix}} R_2^2\xleftarrow{\begin{pmatrix}
-2x_1\\x_0
\end{pmatrix}} R_2
\]
\[
I_3\xleftarrow{\begin{pmatrix}
f_1&f_2&f_3
\end{pmatrix}} R_3^3\xleftarrow{\begin{pmatrix}
-2x_0 & -4x_2 \\
x_1 & -x_1 \\
0 & 2x_0
\end{pmatrix}} R_3^2.
\]
The minimal resolution of $R_n/I_n$ is one step longer than the one for $I_n$. 
\end{proof}

\section{Combinatorial identities}
\label{sec:combinatorics}

We define
$$
\binom{a}{b}_q=\frac{(1-q)\cdots(1-q^a)}{(1-q)\cdots(1-q^b)\cdot (1-q)\cdots (1-q^{a-b})}.
$$
If $a<b$, we set $\binom{a}{b}_q=0$.
The following lemma is well known.
\begin{lemma}
The following identities holds:
$$
\binom{a}{b}_q+q^{b+1}\binom{a}{b+1}_q=\binom{a+1}{b+1}_q=q^{a-b}\binom{a}{b}_q+\binom{a}{b+1}_q.
$$
\end{lemma}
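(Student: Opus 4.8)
The plan is to prove both halves of the identity by a direct manipulation of the defining product formula, with no induction needed. Throughout I would abbreviate $(q)_m=(1-q)(1-q^2)\cdots(1-q^m)$ (with $(q)_0=1$), so that $\binom{a}{b}_q=(q)_a/\!\left((q)_b\,(q)_{a-b}\right)$ for $0\le b\le a$ and $\binom{a}{b}_q=0$ otherwise. First I would reduce to the generic range $0\le b\le a-1$, in which all three of $\binom{a}{b}_q$, $\binom{a}{b+1}_q$, $\binom{a+1}{b+1}_q$ are genuine quotients of products; the boundary values ($b=a$, or $b\ge a+1$, or $b<0$ if one wishes to allow it) are then disposed of one at a time by unwinding the convention $\binom{a}{b}_q=0$, and each of the three claimed equalities becomes an identity among $0$'s and single $q$-factorials.

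In the generic range the computation is mechanical. Using $(q)_{b+1}=(q)_b(1-q^{b+1})$ and $(q)_{a-b}=(q)_{a-b-1}(1-q^{a-b})$, I would bring everything over the common denominator $(q)_{b+1}(q)_{a-b}$, obtaining
$$\binom{a}{b}_q=\frac{(q)_a\,(1-q^{b+1})}{(q)_{b+1}(q)_{a-b}},\qquad \binom{a}{b+1}_q=\frac{(q)_a\,(1-q^{a-b})}{(q)_{b+1}(q)_{a-b}}.$$
Then the left equality follows from the scalar identity $(1-q^{b+1})+q^{b+1}(1-q^{a-b})=1-q^{a+1}$, and the right equality from $q^{a-b}(1-q^{b+1})+(1-q^{a-b})=1-q^{a+1}$; in either case the numerator collapses to $(q)_a(1-q^{a+1})=(q)_{a+1}$, which is exactly $\binom{a+1}{b+1}_q$ over the denominator $(q)_{b+1}(q)_{a-b}$.

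I do not expect a real obstacle here — the lemma is classical — so the only thing worth care is the bookkeeping of the degenerate cases, making sure the stated identity holds verbatim under the paper's convention $\binom{a}{b}_q=0$ for $a<b$ (and, implicitly, for $b<0$, which is never actually invoked later). If one preferred a slicker argument, either half of the identity can instead be used as the inductive step in an easy induction on $a$ starting from $\binom{0}{0}_q=1$, or one can read it off the standard lattice-path model for Gaussian binomial coefficients; but the direct calculation above is the shortest to write and is what I would include.
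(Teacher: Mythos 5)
Your proof is correct and takes essentially the same approach as the paper's: both are direct manipulations of the defining product formula, differing only in whether one factors out $\binom{a}{b}_q$ (as the paper does) or clears to the common denominator $(q)_{b+1}(q)_{a-b}$ (as you do) before applying the scalar identity $(1-q^{b+1})+q^{b+1}(1-q^{a-b})=1-q^{a+1}$. You are slightly more thorough than the paper, which only writes out the left-hand equality and silently leaves the symmetric right-hand one (as well as the degenerate cases $b\ge a$) to the reader.
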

\begin{proof}
One has
$$
\binom{a}{b+1}_q=\frac{(1-q^{a-b})}{(1-q^{b+1})}\binom{a}{b}_q,
$$
hence
$$
\binom{a}{b}_q+q^{b+1}\binom{a}{b+1}_q=\binom{a}{b}_q\left(1+q^{b+1}\frac{(1-q^{a-b})}{(1-q^{b+1})}\right)=
$$
$$
\binom{a}{b}_q\frac{(1-q^{a+1})}{(1-q^{b+1})}=\binom{a+1}{b+1}_q.
$$
\end{proof}

\begin{theorem}
The Hilbert series $H_n(q,t)$ is given by the following explicit formula:
\begin{equation}
\label{def H}
H_n(q,t)=\sum_{p=0}^{\infty}\frac{\binom{h(n,p)+1}{p}_q\cdot q^{p(p-1)}t^p}{(1-q^{n-h(n,p)}t)\cdots (1-q^{n-1}t)},
\end{equation}
where $h(n,p)=\lfloor\frac{n-p}{2}\rfloor$. 
\end{theorem}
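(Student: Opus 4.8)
The plan is to show that the right-hand side of \eqref{def H}, which we denote $G_n(q,t)$, satisfies the recursion \eqref{H recursion} together with the initial conditions of Theorem~\ref{th:H recursion}; since that recursion determines its solution uniquely from the data at $n=0,1,2$, this forces $G_n=H_n$ for all $n$. First I would note that for each fixed $n$ the sum over $p$ is finite, because $\binom{h(n,p)+1}{p}_q=0$ as soon as $p>h(n,p)+1$, i.e. once $3p>n+2$, so $G_n$ is a well-defined rational function. The base cases $n\le 2$ are then immediate: only $p=0,1$ can contribute, and a direct substitution — using that the $p=0$ denominator runs over the indices $i=\lceil n/2\rceil,\dots,n-1$, and that any term whose lower index exceeds its upper index has empty denominator equal to $1$ — recovers $G_0=1$, $G_1=1+t$, $G_2=\frac{1}{1-qt}+t$.

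For the inductive step, the whole computation rests on two identities for the floor, valid for all parities of $n$ because in each case the arguments of the two floors differ by $2$:
$$
h(n-2,p)=h(n,p)-1,\qquad h(n-3,p)=h(n,p+1)-1.
$$
Substituting $t\mapsto qt$ in the $p$-th term of $G_{n-2}$, the numerator becomes $\binom{h(n-2,p)+1}{p}_q\,q^{p(p-1)}(qt)^p=\binom{h(n,p)}{p}_q\,q^{p^2}t^p$, while in the denominator each factor $1-q^i(qt)=1-q^{i+1}t$ turns the index range $i\in\{\,n-1-h(n,p),\dots,n-3\,\}$ of $G_{n-2}$ into $i\in\{\,n-h(n,p),\dots,n-2\,\}$; dividing by $1-q^{n-1}t$ appends one more factor, yielding exactly the denominator $(1-q^{n-h(n,p)}t)\cdots(1-q^{n-1}t)$ of the $p$-th term of $G_n$. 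I would then carry out the parallel computation for $tG_{n-3}(q,q^2t)$, using $h(n-3,p)+1=h(n,p+1)$, the substitution $t\mapsto q^2t$ (each denominator factor shifting by $q^2$), the extra factor $t$, and finally the reindexing $p\mapsto p-1$; the exponent $q^{p(p-1)+2p}=q^{p^2+p}$ in the $p$-th term becomes $q^{p(p-1)}$ after reindexing, and the denominators again collapse to $(1-q^{n-h(n,p)}t)\cdots(1-q^{n-1}t)$, so the contribution of this piece to $\bigl(G_{n-2}(q,qt)+tG_{n-3}(q,q^2t)\bigr)/(1-q^{n-1}t)$ is $\sum_{p\ge 1}q^{p(p-1)}\binom{h(n,p)}{p-1}_q t^p$ over that common denominator.

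Adding the two pieces, the coefficient of $t^p/\bigl[(1-q^{n-h(n,p)}t)\cdots(1-q^{n-1}t)\bigr]$ in $\bigl(G_{n-2}(q,qt)+tG_{n-3}(q,q^2t)\bigr)/(1-q^{n-1}t)$ is $q^{p^2}\binom{h(n,p)}{p}_q+q^{p(p-1)}\binom{h(n,p)}{p-1}_q$, whereas the $p$-th term of $G_n$ contributes $q^{p(p-1)}\binom{h(n,p)+1}{p}_q$ over the same denominator. Dividing by $q^{p(p-1)}$, matching these amounts precisely to
$$
q^{p}\binom{h(n,p)}{p}_q+\binom{h(n,p)}{p-1}_q=\binom{h(n,p)+1}{p}_q,
$$
which is the $q$-Pascal identity of the preceding Lemma with $a=h(n,p)$ and $b=p-1$ (and the case $p=0$ is covered as well, using $\binom{a}{-1}_q=0$). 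This establishes \eqref{H recursion} for $G_n$ and completes the induction.

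I expect the only genuine obstacle to be the index bookkeeping in the middle of the argument: one must verify that after the substitutions $t\mapsto qt$ and $t\mapsto q^2t$ and the division by $1-q^{n-1}t$, the denominators reassemble into exactly $(1-q^{n-h(n,p)}t)\cdots(1-q^{n-1}t)$ with no off-by-one error in the number of factors, and that the reindexing $p\mapsto p-1$ in the $G_{n-3}$ term lines up with the $p$-indexing of $G_n$; once the two floor identities above are in hand, the powers of $q$ collapse as indicated and the remaining algebra is self-checking. A minor loose end to dispose of along the way is that whenever any $q$-binomial vanishes the corresponding term is zero on both sides, so the potentially degenerate (empty-product) denominators appearing in such terms never cause trouble.
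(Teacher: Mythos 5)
Your proposal is correct and follows essentially the same route as the paper's proof: verify that the right-hand side satisfies the recursion of Theorem~\ref{th:H recursion} termwise, using the floor identities $h(n-2,p)=h(n-3,p-1)=h(n,p)-1$ and the $q$-Pascal identity $q^p\binom{h(n,p)}{p}_q+\binom{h(n,p)}{p-1}_q=\binom{h(n,p)+1}{p}_q$, then invoke uniqueness of the solution given the initial conditions. The index bookkeeping you flag as the main risk does in fact go through exactly as you describe, so there is no gap.
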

\begin{proof}
By Theorem \ref{th:H recursion} it is sufficient to prove that the right hand side of \eqref{def H} satisfies the recursion relation \eqref{H recursion}. Let us denote the $p$-th term in \eqref{def H} by $H_{n,p}(q,t)$ so that $H_n(q,t)=\sum_{p}H_{n,p}(q,t)$.
We have $h(n-2,p)=h(n-3,p-1)=h(n,p)-1$, so
$$
H_{n-2,p}(q,qt)=\frac{\binom{h(n,p)}{p}_q\cdot q^{p(p-1)}t^p\cdot q^p}{(1-q^{n-h(n,p)}t)\cdots (1-q^{n-2}t)},
$$
$$
H_{n-3,p-1}(q,q^2t)=\frac{\binom{h(n,p)}{p-1}_q\cdot q^{(p-1)(p-2)}t^{p-1}\cdot q^{2p-2}}{(1-q^{n-h(n,p)}t)\cdots (1-q^{n-2}t)},
$$
therefore
\begin{multline}
H_{n-2,p}(q,qt)+tH_{n-3,p-1}(q,q^2t)=\\
\frac{q^{p(p-1)}t^p}{(1-q^{n-h(n,p)}t)\cdots (1-q^{n-2}t)}\left[q^p\binom{h(n,p)}{p}_q+\binom{h(n,p)}{p-1}_q\right]=\\
\frac{q^{p(p-1)}t^p}{(1-q^{n-h(n,p)}t)\cdots (1-q^{n-2}t)}\binom{h(n,p)+1}{p}_q=(1-q^{n-1}t)H_{n,p}(q,t).
\end{multline}
This proves \eqref{H recursion}, and the initial conditions are easy to check.
\end{proof}

The free resolution of $I_n$ gives another formula for the Hilbert series of $R_n/I_n$.

\begin{proposition}
\label{prop: ranks free resolution}
Let $b(i,n)$, as above, denote the rank of $i$-th module in the free resolution of $R_n/I_n$. Then
\[
b(i,n)=\sum_{p}\left[\binom{n-2p+1}{p}\binom{p}{i-p}+\binom{n-2p-1}{p}\binom{p}{i-p-1}\right]
\]
\end{proposition}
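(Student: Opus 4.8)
The proof runs by induction on $n$, fed by two inputs: the recursion \eqref{recursion ranks resolution}, $b(i,n)=b(i,n-1)+b(i-1,n-3)+b(i-2,n-3)$, valid for $n\ge 3$ and all $i$ (with the convention $b(i,n)=0$ for $i<0$), and the base data for $n=0,1,2$, read off from the explicit minimal free resolutions of $R_n/I_n$, whose ranks are $(1)$, $(1,1)$ and $(1,2,1)$ respectively (the last is computed in the proof of Corollary \ref{cor: proj dim}). Since \eqref{recursion ranks resolution} expresses $b(\cdot,n)$ through $b(\cdot,n-1)$ and $b(\cdot,n-3)$, these data pin down every $b(i,n)$, so it suffices to verify that the claimed closed form obeys the same recursion and agrees with the three base cases. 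Throughout I use the convention $\binom{a}{b}=0$ unless $0\le b\le a$, so that each sum over $p$ is finite.

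To deal with the recursion efficiently I would pass to the Poincar\'e polynomial $P_n(u)=\sum_i b(i,n)u^i$, in terms of which \eqref{recursion ranks resolution} becomes $P_n(u)=P_{n-1}(u)+(u+u^2)P_{n-3}(u)$ with $P_0=1$, $P_1=1+u$, $P_2=(1+u)^2$. Multiplying the right-hand side of the Proposition by $u^i$, summing over $i$, and using $\sum_i\binom{p}{i-p}u^i=u^p(1+u)^p$ and $\sum_i\binom{p}{i-p-1}u^i=u^{p+1}(1+u)^p$, one sees that the Proposition is equivalent to the polynomial identity
$$P_n(u)=G_n(w)+u\,H_n(w),\qquad w=u+u^2,$$
where $G_n(z)=\sum_p\binom{n-2p+1}{p}z^p$ and $H_n(z)=\sum_p\binom{n-2p-1}{p}z^p$. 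Hence it is enough to show that $G_n(w)+u\,H_n(w)$, as a sequence in $n$, satisfies the same recursion and the same three initial values as $P_n$; and for that it suffices that both $G_n$ and $H_n$ satisfy $\Phi_n(z)=\Phi_{n-1}(z)+z\,\Phi_{n-3}(z)$.

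The recursion for $G_n$ (and, verbatim, for $H_n$) is a one-line computation with Pascal's rule:
$$G_n(z)-G_{n-1}(z)=\sum_p\Bigl(\binom{n-2p+1}{p}-\binom{n-2p}{p}\Bigr)z^p=\sum_p\binom{n-2p}{p-1}z^p=\sum_q\binom{n-2-2q}{q}z^{q+1}=z\,G_{n-3}(z),$$
where the last step reindexes $p=q+1$. The initial values follow straight from the defining sums: $G_0=1$, $H_0=0$, $G_1=H_1=1$, $G_2=1+w$, $H_2=1$, so $G_0(w)+u\,H_0(w)=1$, $G_1(w)+u\,H_1(w)=1+u$, $G_2(w)+u\,H_2(w)=(1+u)^2$, matching $P_0,P_1,P_2$. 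Induction on $n$ then finishes the proof.

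The only point requiring care is the bookkeeping forced by the binomial convention: Pascal's identity has a single exceptional value at the apex of the triangle, and one must check that it never interferes with the two reindexings and that all terms with out-of-range arguments vanish as claimed — this is routine but should be spelled out. If one prefers to avoid generating functions, the same recursion can be verified directly: the right-hand side of the Proposition equals $c(i,n)+c(i-1,n-2)$ with $c(i,n)=\sum_p\binom{n-2p+1}{p}\binom{p}{i-p}$, and two applications of Pascal's rule show $c(i,n)=c(i,n-1)+c(i-1,n-3)+c(i-2,n-3)$, which immediately gives \eqref{recursion ranks resolution} for the closed form.
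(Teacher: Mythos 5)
Your proof is correct, and it takes a genuinely different route from the paper's. The paper works directly with the summands $A(n,p,i)=\binom{n-2p+1}{p}\binom{p}{i-p}$ and $B(n,p,i)=\binom{n-2p-1}{p}\binom{p}{i-p-1}$, applying Pascal's rule twice to each to verify that the claimed formula satisfies \eqref{recursion ranks resolution} term by term, and then checks base cases by computing $b(0,n),b(1,n),b(2,n)$ from the formula and invoking Corollary \ref{cor: proj dim} to kill $b(i,n)$ for $i>2$, $n\le 3$. You instead pass to the Poincar\'e polynomial $P_n(u)=\sum_i b(i,n)u^i$, which turns \eqref{recursion ranks resolution} into the clean scalar recursion $P_n=P_{n-1}+(u+u^2)P_{n-3}$, and you observe that the substitution $w=u+u^2$ collapses the inner binomial sums via $\sum_i\binom{p}{i-p}u^i=w^p$ and $\sum_i\binom{p}{i-p-1}u^i=uw^p$, reducing everything to the one-variable recursion $\Phi_n(z)=\Phi_{n-1}(z)+z\Phi_{n-3}(z)$ for the sequences $G_n,H_n$. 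That is a nicer conceptual packaging: it isolates the Fibonacci-type recursion as the real content and makes the two-variable bookkeeping automatic. The direct alternative you sketch at the end (writing the right side as $c(i,n)+c(i-1,n-2)$ and applying Pascal twice) is essentially the paper's argument, with $c(i,n)=\sum_p A(n,p,i)$ and $c(i-1,n-2)=\sum_p B(n,p,i)$. Both proofs rest on the same two ingredients — Pascal's rule and the base data from the explicit resolutions at small $n$ — so the choice is a matter of taste; your version is arguably more transparent.
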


\begin{remark}
The terms in the first sum are nonzero if 
$p\le (n+1)/3$  and $i/2\le p\le i.$
 The terms in the second sum are nonzero if 
$p\le (n-1)/3$ and $(i-1)/2\le p\le (i-1).$
\end{remark}

\begin{proof}
Let 
$$
A(n,p,i)=\binom{n-2p+1}{p}\binom{p}{i-p}, B(n,p,i)=\binom{n-2p-1}{p}\binom{p}{i-p-1}.
$$ 
Then 
$$
A(n-1,p,i)+A(n-3,p-1,i-1)+A(n-3,p-1,i-2)=
$$
$$
\binom{n-2p}{p}\binom{p}{i-p}+\binom{n-2p}{p-1}\binom{p-1}{i-p}+\binom{n-2p}{p-1}\binom{p-1}{i-p-1}=
$$
$$
\binom{n-2p}{p}\binom{p}{i-p}+\binom{n-2p}{p-1}\binom{p}{i-p}=\binom{n-2p+1}{p}\binom{p}{i-p}=A(n,p,i).
$$
Similarly, $B(n-1,p,i)+B(n-3,p-1,i-1)+B(n-3,p-1,i-2)=B(n,p,i)$, so the right hand side satisfies the recursion relation \eqref{recursion ranks resolution}. It remains to check the base cases: 
$$
f(0,n)=1=\binom{n-1}{0},
$$
$$
f(1,n)=n=\binom{n-1}{1}+\binom{n-3}{0}, 
$$
$$
f(2,n)=(n-1)+\binom{n-2}{2}=\binom{n-1}{1}+\binom{n-3}{1}+\binom{n-3}{2}.
$$
By Corollary \ref{cor: proj dim} $b(i,n)=0$ for $i>2$ and $n\le 3$.
\end{proof}

We have the following $(q,t)$-analogue of Proposition \ref{prop: ranks free resolution}.

\begin{proposition}
Let $\widehat{b}(i,n)$ denote the bigraded Hilbert polynomial for the generating set in $F(i,n)$. Then
\begin{multline}
\label{eq: q ranks free resolution}
\widehat{b}(i,n)=\sum_{p>0}q^{\frac{5p^2-3p+(i-p)(i-p-1)}{2}}t^{2p+(i-p)}\binom{n-2p+1}{p}_q\binom{p}{i-p}_q+\\
q^{\frac{5p^2+5p+(i-p)(i-p-1)}{2}}t^{2p+2+(i-p)}\binom{n-2p-1}{p}_q\binom{p}{i-p-1}_q\\
\end{multline}
\end{proposition}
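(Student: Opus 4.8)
The plan is to prove \eqref{eq: q ranks free resolution} by induction on $n$, following exactly the same template as the proof of Proposition \ref{prop: ranks free resolution}, but now bookkeeping the two gradings. By Theorem \ref{thm: recursion resolution} the generating set in $F(i,n)$ decomposes as $F(i,n-1)$, together with $S(F(i-1,n-3))$ and $x_0S(F(i-2,n-3))$, where the latter two summands have their bidegrees shifted by the bidegree $q^{n-1}t^2$ of $f_n$; moreover the operator $S$ multiplies the $q$-grading appropriately (it sends $x_j$ to $x_{j+1}$, raising the $q$-degree by one on each of the ambient variables, which on Hilbert polynomials amounts to the substitution $q\mapsto q$, $t\mapsto qt$ after accounting for the number of variables — more precisely one checks on generators that $\widehat{b}(i,n)$ picks up the factor coming from the $f_n$-shift and a $q$-power from $S$). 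So the recursion \eqref{recursion ranks resolution} refines to
$$
\widehat{b}(i,n)=\widehat{b}(i,n-1)+q^{n-1}t^2\cdot(\text{$S$-twist of }\widehat{b}(i-1,n-3))+q^{n-1}t^2\cdot t\cdot(\text{$S$-twist of }\widehat{b}(i-2,n-3)),
$$
where the extra $t$ in the last term records the multiplication by $x_0$, and the $S$-twist is the substitution $t\mapsto qt$ (equivalently $q^{\deg}\mapsto$ a shift). The first step is therefore to pin down this refined recursion precisely by tracing the gradings in Theorem \ref{thm: recursion resolution}, exactly as the first part of the proof of Theorem \ref{th:H recursion} traces them for the Hilbert series.

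Next I would define
$$
\widehat{A}(n,p,i)=q^{\frac{5p^2-3p+(i-p)(i-p-1)}{2}}t^{2p+(i-p)}\binom{n-2p+1}{p}_q\binom{p}{i-p}_q,
$$
$$
\widehat{B}(n,p,i)=q^{\frac{5p^2+5p+(i-p)(i-p-1)}{2}}t^{2p+2+(i-p)}\binom{n-2p-1}{p}_q\binom{p}{i-p-1}_q,
$$
and verify that each of $\widehat A$ and $\widehat B$ separately satisfies the refined recursion, with the index shifts $(n,p,i)\mapsto(n-1,p,i)$, $(n-3,p-1,i-1)$, $(n-3,p-1,i-2)$ matching the three summands above. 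This is the $q$-analogue of the chain of identities
$$
A(n-1,p,i)+A(n-3,p-1,i-1)+A(n-3,p-1,i-2)=A(n,p,i)
$$
in Proposition \ref{prop: ranks free resolution}; the key input is the $q$-Pascal identity from the Lemma preceding \eqref{def H}, namely
$$
\binom{a}{b}_q+q^{b+1}\binom{a}{b+1}_q=\binom{a+1}{b+1}_q=q^{a-b}\binom{a}{b}_q+\binom{a}{b+1}_q,
$$
applied twice — once to combine the two $\binom{p-1}{\cdot}_q$ factors into a $\binom{p}{\cdot}_q$ factor, and once to combine the two $\binom{n-2p}{\cdot}_q$ factors into $\binom{n-2p+1}{p}_q$. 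The exponents of $q$ and $t$ must be checked to match under the twists; this is where the somewhat mysterious quadratic exponents $\frac{5p^2-3p}{2}$, $\frac{5p^2+5p}{2}$, $\frac{(i-p)(i-p-1)}{2}$ are forced. Concretely, the $S$-twist $t\mapsto qt$ applied to the term with index $(p-1,i-1)$ or $(p-1,i-2)$ together with the overall $q^{n-1}t^2$ from $f_n$ should reproduce the exponents of the $(p,i)$ term, and one verifies the arithmetic identities
$$
\tfrac{5(p-1)^2-3(p-1)+((i-1)-(p-1))((i-1)-(p-1)-1)}{2}+(n-1)+(\text{$S$ contribution})=\tfrac{5p^2-3p+(i-p)(i-p-1)}{2}
$$
(and the analogue with $i-2$, and the analogues for $\widehat B$), each of which is a routine polynomial identity in $n,p,i$.

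Finally I would check the base cases, i.e. small $n$ (say $n\le 3$) and small $i$, against the explicit minimal resolutions computed in the proof of Corollary \ref{cor: proj dim}: $\widehat b(0,n)=1$, $\widehat b(1,n)=\sum_{j=0}^{n-1}q^jt^2$ (the generators $f_1,\dots,f_n$ have bidegrees $q^{j}t^2$), and $\widehat b(2,n)$ given by the bidegrees of the $\mu_k$ and $\nu_{ij}$ from Theorem \ref{th:syz}; each of these should match the $p=0,1$ terms of \eqref{eq: q ranks free resolution}, and by Corollary \ref{cor: proj dim} everything vanishes for $i>2$ when $n\le 3$. I expect the main obstacle to be purely organizational rather than conceptual: correctly identifying the $q$-power picked up by the shift operator $S$ on the generating sets (as opposed to on the whole modules, where it is the clean substitution of Theorem \ref{th:H recursion}), and then keeping the three-way exponent bookkeeping straight so that the two applications of $q$-Pascal line up. Once the refined recursion is stated correctly, the rest is the same two-step $q$-Pascal computation as in Proposition \ref{prop: ranks free resolution} together with a finite check of initial data. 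Summing \eqref{eq: q ranks free resolution} over $i$ with signs $(-1)^i$ and using $\sum_i (-1)^i\binom{p}{i-p}_q\cdots$ telescoping via the same Lemma then recovers the closed formula for $H_n(q,t)$ stated in the introduction, but that is a corollary rather than part of this proof.
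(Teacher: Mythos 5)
Your proposal is correct and follows essentially the same route as the paper's own proof: it establishes the bigraded refinement of the recursion \eqref{recursion ranks resolution}, namely $\widehat{b}(i,n)=\widehat{b}(i,n-1)+q^{n-1}t^2\,\widehat{b}(i-1,n-3)(q,qt)+q^{n-1}t^3\,\widehat{b}(i-2,n-3)(q,qt)$ (your ``$S$-twist'' $t\mapsto qt$ and the extra factors $q^{n-1}t^2$ for $f_n$ and $t$ for $x_0$ are exactly right), then verifies that $\widehat{A}$ and $\widehat{B}$ separately satisfy it by two applications of the $q$-Pascal identity, and closes with the base cases $i=0,1,2$. The bookkeeping you correctly flag as the main work is indeed the only non-routine content, and the paper carries it out exactly as you describe.
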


\begin{proof}
The proof is completely analogous to the proof of Proposition \ref{prop: ranks free resolution}, but we include it here for completeness. By Theorem \ref{thm: recursion resolution} we have a recursion relation
\begin{equation}
\label{eq: q ranks recursion}
\widehat{b}(i,n)=\widehat{b}(i,n-1)+q^{n-1}t^2\widehat{b}(i-1,n-3)(q,qt)+q^{n-1}t^3\widehat{b}(i-2,n-3)(q,qt).
\end{equation}
We need to prove that the right hand side of \eqref{eq: q ranks free resolution} satisfies \eqref{eq: q ranks recursion}.
Let 
$$
\widehat{A}(n,p,i)=q^{\frac{5p^2-3p+(i-p)(i-p-1)}{2}}t^{2p+(i-p)}\binom{n-2p+1}{p}_q\binom{p}{i-p}_q.
$$ 
Then 
$$
\widehat{A}(n-3,p-1,i-1)(q,qt)=q^{\frac{5p^2-9p+4+(i-p)(i-p+1)}{2}}t^{2p-2+(i-p)}\binom{n-2p}{p-1}_q\binom{p-1}{i-p}_q,
$$
$$
\widehat{A}(n-3,p-1,i-2)(q,qt)=q^{\frac{5p^2-9p+4+(i-p)(i-p-1)}{2}}t^{2p-2+(i-p-1)}\binom{n-2p}{p-1}_q\binom{p-1}{i-p-1}_q,
$$
so
$$
\widehat{A}(n-3,p-1,i-1)(q,qt)+t\widehat{A}(n-3,p-1,i-2)(q,qt)=
$$
$$
q^{\frac{5p^2-9p+4+(i-p)(i-p-1)}{2}}t^{2p-2+(i-p)}\binom{n-2p}{p-1}_q\binom{p}{i-p}_q.
$$
Now 
$$
\widehat{A}(n-1,p,i)+q^{n-1}t^2\widehat{A}(n-3,p-1,i-1)(q,qt)+q^{n-1}t^3\widehat{A}(n-3,p-1,i-2)(q,qt)=
$$
$$
q^{\frac{5p^2-3p+(i-p)(i-p-1)}{2}}t^{2p+(i-p)}\left[\binom{n-2p}{p}_q\binom{p}{i-p}_q+q^{n-3p+1}\binom{n-2p}{p-1}_q\binom{p}{i-p}_q\right]=
$$
$$
q^{\frac{5p^2-3p+(i-p)(i-p-1)}{2}}t^{2p+(i-p)}\binom{n-2p+1}{p}_q\binom{p}{i-p}_q=\widehat{A}(n,p,i).
$$
A similar recursion holds for $\widehat{B}(n,p,i)$. It remains to check the initial conditions:  $$\widehat{b}(0,n)=1,$$
$$
\widehat{b}(1,n)=(t^2+qt^2+\ldots+q^{n-1}t^2)=qt^2\binom{n-1}{1}_q+t^2\binom{n-3}{0}, 
$$
$$
\widehat{b}(2,n)=qt^3[n-1]_q+q^5t^4\binom{n-2}{2}_q=qt^3\binom{n-1}{1}_q+q^5t^4\binom{n-3}{1}+q^7t^4\binom{n-3}{2}_q.
$$
\end{proof}

The following result was conjectured by the second author, Oblomkov and Rasmussen in \cite[Conjecture 4.1]{GOR}.

\begin{theorem}
The Hilbert series of $R_n/I_n$ has the following form:
\begin{multline}
H_n(q,t)=\frac{1}{\prod_{i=0}^{n-1}(1-q^{i}t)}\sum_{p=0}^{\infty}(-1)^{p}\prod_{k=0}^{p-1}(1-q^{k}t)\times\\
\left(q^{\frac{5p^2-3p}{2}}t^{2p}\binom{n-2p+1}{p}_q-q^{\frac{5p^2+5p}{2}}t^{2p+2}\binom{n-2p-1}{p}_q\right).\\
\end{multline}
\end{theorem}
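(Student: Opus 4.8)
The plan is to read the formula off from the minimal free resolution of $R_n/I_n$ constructed in Section \ref{sec:resolution}, using only additivity of Hilbert series and the finite $q$-binomial theorem; no further input about the jet scheme is needed.

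I would start from the Euler characteristic identity. The resolution $0\to F(\ell,n)\to\cdots\to F(1,n)\to F(0,n)=R_n\to R_n/I_n\to 0$ is finite (Corollary \ref{cor: proj dim}), and each $F(i,n)$ is a free bigraded $R_n$-module whose generator bidegrees are recorded by $\widehat{b}(i,n)$; since the Hilbert series of $R_n$ is $\prod_{i=0}^{n-1}(1-q^it)^{-1}$, the Hilbert series of $F(i,n)$ equals $\widehat{b}(i,n)\cdot\prod_{i=0}^{n-1}(1-q^it)^{-1}$. Taking the alternating sum of Hilbert series along the exact resolution then gives
\[
\widetilde{H}_n(q,t)=H_n(q,t)\prod_{i=0}^{n-1}(1-q^it)=\sum_{i\ge 0}(-1)^i\,\widehat{b}(i,n),
\]
so it remains to evaluate this alternating sum using the explicit expression for $\widehat{b}(i,n)$ in \eqref{eq: q ranks free resolution}.

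Next I would interchange the two summations, putting the sum over $p$ on the outside and the sum over $i$ on the inside, and for fixed $p$ reindex the inner sum: in the first family of terms substitute $j=i-p$ and in the second family $j=i-p-1$. In each case the inner alternating sum in $i$ reduces --- after pulling out the $p$-dependent prefactor and, if necessary, absorbing residual powers of $q$ into the summation variable --- to an instance of Gauss's finite $q$-binomial identity
\[
\sum_{j=0}^{p}(-1)^{j}q^{\binom{j}{2}}\binom{p}{j}_{q}z^{j}=\prod_{k=0}^{p-1}(1-q^{k}z),
\]
and a check of exponents shows that both families collapse to the \emph{same} product $\prod_{k=0}^{p-1}(1-q^{k}t)$. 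Tracking the signs, the first family contributes $(-1)^p q^{(5p^2-3p)/2}t^{2p}\binom{n-2p+1}{p}_q\prod_{k=0}^{p-1}(1-q^kt)$ and the second $-(-1)^p q^{(5p^2+5p)/2}t^{2p+2}\binom{n-2p-1}{p}_q\prod_{k=0}^{p-1}(1-q^kt)$; summing over $p\ge 0$ (with the degenerate term $p=0$ read with empty product $1$, so that it contributes $1-t^2$ and reconciles the $\widehat b(0,n)=1$ term with the lowest term of $\widehat b(1,n)$) produces exactly the asserted expression for $\widetilde H_n$, hence for $H_n$ after dividing by $\prod_{i=0}^{n-1}(1-q^it)$. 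As a cross-check, or as a self-contained alternative that bypasses \eqref{eq: q ranks free resolution}, one may instead verify that the claimed right-hand side satisfies recursion \eqref{F recursion} together with the correct initial conditions, which reduces to repeated use of the $q$-Pascal identities recorded earlier in this section.

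The step I expect to be the main obstacle is the bookkeeping in the second family of terms: one must arrange the reindexing so that the shifted $q$- and $t$-exponents line up to give precisely $\prod_{k=0}^{p-1}(1-q^kt)$ --- the same product as in the first family, so that the two can be combined under a single product --- and one must correctly account for the boundary contributions at $p=0$ and $i=0$. Once those are handled, the rest is a formal consequence of additivity of Hilbert series and the $q$-binomial theorem.
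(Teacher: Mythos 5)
Your approach — taking the alternating sum of the resolution's generator-degree generating functions $\widehat b(i,n)$ from Proposition~\eqref{eq: q ranks free resolution}, reindexing the inner sums by $j=i-p$ (resp.\ $j=i-p-1$), and invoking the finite $q$-binomial theorem — is precisely the paper's own (very terse) proof, and your suggested cross-check via recursion \eqref{F recursion} is also sound. One caution on the bookkeeping you flag as the main obstacle: as printed, the second summand of \eqref{eq: q ranks free resolution} appears to carry a typo (the exponents should read $\tfrac{5p^2+5p+(i-p-1)(i-p-2)}{2}$ and $2p+2+(i-p-1)$ rather than $\tfrac{5p^2+5p+(i-p)(i-p-1)}{2}$ and $2p+2+(i-p)$, as one can confirm from the stated initial conditions for $\widehat b(1,n)$ and $\widehat b(2,n)$); with the literal printed exponents the second family collapses to $t\prod_{k=1}^{p}(1-q^k t)$ rather than $\prod_{k=0}^{p-1}(1-q^k t)$, so the reindexing lines up only after that correction.
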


\begin{proof}
It is clear that $H_n(q,t)=\frac{1}{\prod_{i=0}^{n-1}(1-q^{i}t)}\sum_{i=0}^{\infty}(-1)^{i}\widehat{b}(i,n).$ 
The latter can be computed by \eqref{eq: q ranks free resolution}, and it remains to use the identity
$$
\prod_{k=0}^{p-1}(1-q^{k}t)=\sum_{j=0}^{p}(-1)^{j}q^{j(j-1)/2}t^{j}\binom{p}{j}.
$$
\end{proof}

\section{Limit at $n\to \infty$}
\label{sec:limit}

In the limit $n\to \infty$ both formulas for the Hilbert series simplify. Indeed, for fixed $p$ we have
$$
\lim_{n\to \infty}\binom{n}{p}_q=\frac{1}{(1-q)\cdots (1-q^{p})},
$$
so we can take the limit of all the above results. 

\begin{proposition}
The limit of the Hilbert series $H_n(q,t)$ has the following form:
\begin{equation}
\label{infinity fermionic}
H_{\infty}(q,t)=\sum_{p=0}^{\infty}\frac{q^{p(p-1)}t^p}{(1-q)(1-q^2)\cdots(1-q^{p})}.
\end{equation}
\end{proposition}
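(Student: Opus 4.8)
The plan is to obtain the formula by letting $n\to\infty$ directly in the explicit expression \eqref{def H}, all limits being taken in $\Z[[q,t]]$ with its $(q,t)$-adic topology (equivalently, coefficientwise). Write $H_n(q,t)=\sum_{p\ge 0}H_{n,p}(q,t)$ for the $p$-th summand of \eqref{def H}. Since $t^p\mid H_{n,p}$ for every $n$, the series $\sum_p H_{n,p}$ converges $t$-adically uniformly in $n$, so $\lim_{n\to\infty}$ commutes with $\sum_p$, and it suffices to compute $\lim_{n\to\infty}H_{n,p}(q,t)$ for each fixed $p$.

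I would then analyze the three ingredients of $H_{n,p}$ separately. With $h(n,p)=\lfloor\frac{n-p}{2}\rfloor$ one has $h(n,p)\to\infty$ and $n-h(n,p)=\lceil\frac{n+p}{2}\rceil\to\infty$ as $n\to\infty$. Hence the Gaussian binomial
$$
\binom{h(n,p)+1}{p}_q=\frac{(1-q^{h(n,p)+2-p})(1-q^{h(n,p)+3-p})\cdots(1-q^{h(n,p)+1})}{(1-q)(1-q^2)\cdots(1-q^p)}
$$
tends to $\dfrac{1}{(1-q)(1-q^2)\cdots(1-q^p)}$, because each of its $p$ numerator factors tends to $1$. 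For the denominator, every factor of $(1-q^{n-h(n,p)}t)\cdots(1-q^{n-1}t)$ has the form $1-q^{j}t$ with $j\ge n-h(n,p)$; expanding this finite product shows that the coefficient of $t^{k}$ in it is divisible by $q^{k(n-h(n,p))}$ for every $k\ge 1$, so the product tends to $1$. Combining the three,
$$
\lim_{n\to\infty}H_{n,p}(q,t)=\frac{q^{p(p-1)}t^p}{(1-q)(1-q^2)\cdots(1-q^{p})}.
$$

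Summing over $p$ and invoking the interchange of limit and sum justified above would give
$$
H_\infty(q,t)=\sum_{p\ge 0}\lim_{n\to\infty}H_{n,p}(q,t)=\sum_{p=0}^{\infty}\frac{q^{p(p-1)}t^p}{(1-q)(1-q^2)\cdots(1-q^{p})},
$$
which is exactly \eqref{infinity fermionic}. The argument is essentially a limit computation, and the only steps needing genuine care — hence the closest thing to an obstacle — are the topological ones: justifying that $\lim_n$ passes inside $\sum_p$, and checking that the denominator, a product of an unbounded number of factors $1-q^jt$ with all $j\to\infty$, still converges to $1$. Both are handled by the $t$-adic and $(q,t)$-adic divisibility estimates indicated above. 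One could instead pass to the limit in the formula of \cite{GOR} proved above, but \eqref{def H} gives the shortest route.
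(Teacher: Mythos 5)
Your proof is correct and follows the same route as the paper: pass to the limit $n\to\infty$ in the explicit formula \eqref{def H} term by term, using $\lim_{n\to\infty}\binom{h(n,p)+1}{p}_q = \frac{1}{(1-q)\cdots(1-q^p)}$. The only difference is that you spell out the two convergence points that the paper leaves implicit (the denominator $\prod_{j=n-h(n,p)}^{n-1}(1-q^jt)\to 1$ despite having unboundedly many factors, and the interchange of $\lim_n$ with $\sum_p$), both handled correctly via $(q,t)$-adic and $t$-adic estimates.
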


\begin{proposition}
The limit of the bigraded rank of the $i$-th syzygy module $F(i,n)$ equals
\begin{multline}
\widehat{b}(i,\infty)=\sum_{p>0}(q^{\frac{5p^2-3p+(i-p)(i-p-1)}{2}}t^{2p+(i-p)}\binom{p}{i-p}_q\frac{1}{(1-q)\cdots (1-q^{p})}+\\
q^{\frac{5p^2+5p+(i-p)(i-p-1)}{2}}t^{2p+2+(i-p)}\binom{p}{i-p-1}_q\frac{1}{(1-q)\cdots (1-q^{p})})\\
\end{multline}
\end{proposition}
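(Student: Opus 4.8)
The plan is simply to pass to the limit $n\to\infty$ in the finite-$n$ formula \eqref{eq: q ranks free resolution} for $\widehat{b}(i,n)$, which has already been established. The first point to record is that for each fixed $i$ the sum over $p$ on the right-hand side of \eqref{eq: q ranks free resolution} has only finitely many nonzero terms: the factor $\binom{p}{i-p}_q$ vanishes unless $i/2\le p\le i$, and $\binom{p}{i-p-1}_q$ vanishes unless $(i-1)/2\le p\le i-1$. Hence no convergence issue arises in the $p$-summation, and it suffices to take the limit of each term separately.

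Next I would observe that inside a single term the powers $q^{\frac{5p^2-3p+(i-p)(i-p-1)}{2}}$, $t^{2p+(i-p)}$, $q^{\frac{5p^2+5p+(i-p)(i-p-1)}{2}}$, $t^{2p+2+(i-p)}$, as well as the small Gaussian binomials $\binom{p}{i-p}_q$ and $\binom{p}{i-p-1}_q$, are independent of $n$ and therefore survive the limit unchanged. The only $n$-dependence sits in the factors $\binom{n-2p+1}{p}_q$ and $\binom{n-2p-1}{p}_q$, and for any constant $c$ one has
$$
\lim_{n\to\infty}\binom{n-c}{p}_q=\lim_{m\to\infty}\binom{m}{p}_q=\frac{1}{(1-q)(1-q^2)\cdots(1-q^{p})},
$$
as recalled at the start of this section. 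Substituting these limits into \eqref{eq: q ranks free resolution} term by term yields exactly the claimed expression for $\widehat{b}(i,\infty)$.

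Finally, to make the statement meaningful I would note that $\widehat{b}(i,\infty)$ is to be understood as the coefficient-wise limit of $\widehat{b}(i,n)$ in $\Z[[q,t]]$, and that this limit genuinely exists: by Theorem \ref{thm: recursion resolution} the new generators contributed to $F(i,n)$ in passing from $n-1$ to $n$ all acquire the bidegree shift $q^{n-1}t^2$, so every fixed bigraded piece of the generating set of $F(i,n)$ stabilizes once $n$ is large. There is no real obstacle beyond this bookkeeping: the content of the proposition is entirely contained in the finite-$n$ identity \eqref{eq: q ranks free resolution}, proved earlier, together with the elementary limit of Gaussian binomials; the only point requiring care is to be precise about the mode of convergence and about the finiteness of the $p$-sum.
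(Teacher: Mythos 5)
Your proof is correct and follows exactly the route the paper intends: in Section \ref{sec:limit} the authors simply record the limit of the Gaussian binomial and say ``we can take the limit of all the above results,'' which is precisely the term-by-term passage from \eqref{eq: q ranks free resolution} that you carry out. The additional remarks you supply (finiteness of the $p$-sum for fixed $i$, stabilization of each bigraded piece because new generators carry the shift $q^{n-1}t^2$) are exactly the bookkeeping the paper leaves implicit.
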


\begin{proposition}
The limit of the Hilbert series $H_n(q,t)$ has the following form:
\begin{multline}
\label{infinity bosonic}
H_n(q,t)=\frac{1}{\prod_{i=0}^{\infty}(1-q^{i}t)}\sum_{p=0}^{\infty}(-1)^{p}\prod_{k=0}^{p-1}\frac{1-q^{k}t}{1-q^{k+1}}
\left(q^{\frac{5p^2-3p}{2}}t^{2p}-q^{\frac{5p^2+5p}{2}}t^{2p+2}\right).\\
\end{multline}
\end{proposition}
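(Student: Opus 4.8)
The plan is to derive the formula by passing to the limit $n\to\infty$ in the closed expression for $H_n(q,t)$ established above (the formula conjectured in \cite{GOR}), namely
$$
H_n(q,t)=\frac{1}{\prod_{i=0}^{n-1}(1-q^{i}t)}\sum_{p=0}^{\infty}(-1)^{p}\prod_{k=0}^{p-1}(1-q^{k}t)\left(q^{\frac{5p^2-3p}{2}}t^{2p}\binom{n-2p+1}{p}_q-q^{\frac{5p^2+5p}{2}}t^{2p+2}\binom{n-2p-1}{p}_q\right).
$$
Here $H_\infty(q,t):=\lim_{n\to\infty}H_n(q,t)$ is to be understood as a coefficient-wise limit in $\Z[[q,t]]$: since the $p$-th summand on the right is divisible by $t^{2p}$, only finitely many $p$ contribute to any fixed power of $t$, so the sum over $p$ is a well-defined power series for each $n$ and likewise in the limit.

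First I would record that, by the identity $\lim_{n\to\infty}\binom{n}{p}_q=\frac{1}{(1-q)\cdots(1-q^{p})}$ recalled at the beginning of this section, one has for every fixed $p$
$$
\lim_{n\to\infty}\binom{n-2p+1}{p}_q=\lim_{n\to\infty}\binom{n-2p-1}{p}_q=\frac{1}{(1-q)\cdots(1-q^{p})}.
$$
The key (and only mildly nontrivial) observation is that the two $q$-binomial coefficients occurring in the $p$-th summand have the \emph{same} limit; hence in the limit the two terms inside the parenthesis acquire the common denominator $(1-q)\cdots(1-q^{p})=\prod_{k=0}^{p-1}(1-q^{k+1})$, and the $p$-th summand converges to
$$
(-1)^{p}\prod_{k=0}^{p-1}\frac{1-q^{k}t}{1-q^{k+1}}\left(q^{\frac{5p^2-3p}{2}}t^{2p}-q^{\frac{5p^2+5p}{2}}t^{2p+2}\right).
$$
Since in addition the prefactor $\frac{1}{\prod_{i=0}^{n-1}(1-q^{i}t)}$ converges coefficient-wise to $\frac{1}{\prod_{i=0}^{\infty}(1-q^{i}t)}$, assembling these limits yields \eqref{infinity bosonic}.

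The only point requiring care is the legitimacy of interchanging the limit $n\to\infty$ with the infinite sum over $p$ and with multiplication by the rational prefactor. This is harmless in the $t$-adic topology on $\Z[[q,t]]$: for each $j$ the coefficient of $t^{j}$ in $H_n(q,t)$ is a finite sum (over those $p$ with $2p\le j$) of rational functions of $q$ whose power-series coefficients in $q$ stabilize as $n\to\infty$, so the limit can be taken term by term exactly as above. I do not expect any genuine obstacle here: the proof is a straightforward specialization of the preceding theorem, and the same remarks apply verbatim to the fermionic formula \eqref{infinity fermionic} and to the limit of the Betti numbers.
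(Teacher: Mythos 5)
Your proof is correct and matches the paper's approach exactly: the paper gives no separate argument for this proposition, stating only that one may take the limit $n\to\infty$ of the earlier closed-form expression for $H_n(q,t)$ using $\lim_{n\to\infty}\binom{n}{p}_q=\frac{1}{(1-q)\cdots(1-q^p)}$, which is precisely what you do, with the convergence justification spelled out.
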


The equality between the right hand sides of \eqref{infinity bosonic} and \eqref{infinity fermionic} was proved in \cite[Theorem 3.3.2(b)]{FS}. At $t=1$ and $t=q$ one recovers more familiar Rogers-Ramanujan identities.


The following proposition concerning Gr\"obner bases in the limit was proved first in \cite{BMS}, but we give an alternative proof here. In fact, \cite{BMS} use a slightly different basis of Bell polynomials. Yet another proof can be obtained by taking the limit in Theorem \ref{th: reduced gb}.

\begin{proposition}
\label{prop: gb infinity}
For $n\to \infty$ the polynomials $f_i$ form a Gr\"obner basis for the ideal $I_{\infty}$.
\end{proposition}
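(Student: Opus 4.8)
The plan is to deduce Proposition~\ref{prop: gb infinity} directly from the finite-$n$ analysis, in particular from Theorem~\ref{th:gb induction} and Theorem~\ref{th: reduced gb}, by letting $n\to\infty$. Recall that a subset $G\subseteq I_\infty$ is a Gr\"obner basis if and only if $\LT(g)\in\langle\LT(G)\rangle$ for every $g\in I_\infty$. Since any single polynomial $g\in I_\infty$ involves only finitely many of the variables $x_0,x_1,\ldots$ and can be written as an $R_\infty$-linear combination of finitely many $f_i$, it actually lies in (the image of) $I_n$ for $n$ large enough. So it suffices to show that, for each fixed $k$, the leading term of any degree-$k$ element of $I_n$ is divisible by $\LT(f_i)$ for some $i$, once $n$ is large.

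The first step is to recall from Theorem~\ref{th: reduced gb} that for $k>2$ the reduced Gr\"obner basis of $I_n$ contains exactly $\binom{\lfloor\frac{n-k+1}{2}\rfloor}{k-2}$ polynomials of degree $k$, with leading terms $m(x)\LT(f_{n+k-2})$ for admissible monomials $m(x)$ of degree $k-2$ in $x_0,\ldots,x_{\lfloor\frac{n+k-7}{2}\rfloor}$. The key observation is that \emph{every} such leading term is divisible by $\LT(f_{n+k-2})=x_{\lfloor\frac{n+k-2}{2}\rfloor}^2$ or $x_{\lfloor\frac{n+k-3}{2}\rfloor}x_{\lceil\frac{n+k-3}{2}\rceil}$ (the leading term of a single $f_j$), and these involve variables with index roughly $n/2$, which tends to infinity. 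Thus for any \emph{fixed} polynomial $g\in I_\infty$, involving variables $x_0,\ldots,x_N$, choosing $n$ so large that $\lfloor\frac{n+k-2}{2}\rfloor>N$ where $k=\deg g$, we see that $g$, viewed in $I_n$, has $\LT(g)$ divisible by some $\LT(f_i)$: it cannot be divisible by the leading term of one of the ``extra'' degree-$\le k$ Gr\"obner basis elements of $I_n$ (those all involve high-index variables), so by the Gr\"obner property of $G_n$ it must be divisible by one of $\LT(f_1),\ldots,\LT(f_n)$, and in fact by one with index $i\le n$ such that $\LT(f_i)$ involves only variables $x_0,\ldots,x_N$.

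To make this clean one should also handle the quadratic part $k\le 2$ separately (trivially, since $f_1,\ldots,f_n$ are the only quadrics, as noted in the Remark after Theorem~\ref{th: reduced gb}) and note that $\LT(f_i)=x_{(i-1)/2}^2$ for $i$ odd and $x_{i/2-1}x_{i/2}$ for $i$ even, so the $\LT(f_i)$ for $i\le 2N+2$ already generate, inside the polynomial ring on $x_0,\ldots,x_N$, all the monomials that could arise as $\LT$ of a degree-$\ge 3$ element not lying in the subring on fewer variables. Assembling these observations gives: for every $g\in I_\infty$, $\LT(g)\in\langle\LT(f_i):i\ge 1\rangle$, which is the assertion.

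The main obstacle is purely bookkeeping: one must verify carefully that the ``extra'' (non-$f_i$) elements of the reduced Gr\"obner basis of $I_n$ of each fixed degree $k$ genuinely have leading terms supported on variables of index $\to\infty$ as $n\to\infty$ — this is exactly what the explicit description $m(x)\LT(f_{n+k-2})$ in Theorem~\ref{th: reduced gb} provides, since $\LT(f_{n+k-2})$ forces a variable of index $\sim n/2$ into every such leading monomial — and then that no infinite accumulation of such terms can combine to capture a fixed monomial. Since a fixed monomial lives in a fixed finite set of variables while every ``extra'' leading term eventually escapes that set, there is no such accumulation, and the argument goes through. An alternative, essentially equivalent route is to take the direct limit in Theorem~\ref{th:gb induction}: the recursive construction $G_n=x_0S^2(G_{n-3})\sqcup\{f_1,f_2\}\sqcup\widetilde S(G_{n-2})$ shows that the only members of $G_n$ involving a \emph{bounded} set of variables (independent of $n$) are $f_1,\ldots,f_m$ for bounded $m$, since each application of $S$ or $x_0S^2$ pushes the support upward; in the limit this leaves precisely $\{f_i\}_{i\ge 1}$ as the relevant Gr\"obner basis.
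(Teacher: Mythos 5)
Your proposal is correct, and it is genuinely different from the paper's own proof. The paper argues directly via Buchberger's criterion (Lemmas~\ref{lem:spairs}--\ref{lem:relprime leading term}): since $\LT(f_j)$ is $x_{(j-1)/2}^2$ or $x_{j/2-1}x_{j/2}$, the only $S$-pairs $(f_i,f_j)$ with non-coprime leading monomials are consecutive, $j=i+1$; the paper then observes from relation~\eqref{eqn: mu rel} that $S(f_i,f_{i+1})$ is, up to sign, the ``top two terms'' of $\phi(\mu_{n-1})=0$ for $n\gg 0$, so that moving the $S$-pair to the other side exhibits a standard reduction of $S(f_i,f_{i+1})$ to zero modulo $\{f_k\}$. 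Your argument instead reads the conclusion off of Theorem~\ref{th: reduced gb} by a direct-limit argument: a fixed $g\in I_\infty$ of ($t$-)degree $k$ involves only $x_0,\ldots,x_N$ and lies in $I_n$ for $n$ large, while every non-$f_i$ member of the reduced Gr\"obner basis of $I_n$ of degree $\le k$ has a leading monomial divisible by $\LT(f_{n+k'-2})$ for some $3\le k'\le k$ and hence by a variable of index roughly $n/2>N$; so for $n\gg 0$ the divisor of $\LT(g)$ guaranteed by the Gr\"obner property must be some $\LT(f_i)$. Both arguments are valid; indeed the paper explicitly flags your route as an alternative (``Yet another proof can be obtained by taking the limit in Theorem~\ref{th: reduced gb}''). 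The trade-off is that the paper's proof is self-contained and requires only the elementary syzygy~\eqref{eqn: mu rel}, whereas yours leans on the full recursive Gr\"obner-basis machinery of Section~\ref{sec:groebner}; on the other hand yours makes the stabilization phenomenon transparent, namely that the ``extra'' Gr\"obner basis elements for finite $n$ are invisible at any bounded set of variables. One small point of hygiene worth stating explicitly in your argument (you use it silently) is that grevlex on $R_n$ restricts to grevlex on $R_{N+1}$ when $N+1\le n$, so $\LT(g)$ is unambiguous as $n$ grows; this is a standard feature of reverse-lexicographic orders and is what makes the ``take $n\to\infty$'' step legitimate.
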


Before embarking on the proof, we record the following lemmas concerning Gr\"obner bases here for the convenience of the reader.
\begin{lemma}[\cite{IVA} Proposition 8 on p. 106]
\label{lem:spairs}
Given $(g_1,\ldots, g_s)\in F_s$, 
the $S$-pairs 
\begin{equation}
\label{eq:spairs}
S_{ij}:=\frac{\lcm(\LT(g_i),\LT(g_j))}{\LT(g_i)}e_i-\frac{\lcm(\LT(g_i),\LT(g_j))}{\LT(g_j)}e_j
\end{equation}
form a homogeneous basis for the syzygies on $\{\LT(g_1),\ldots,\LT(g_s)\}$.
\end{lemma}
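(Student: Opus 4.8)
The plan is to establish the standard fact that the $S$-pairs on a finite collection of monomials generate the module of syzygies on those monomials (this is precisely \cite[Proposition 8 on p.~106]{IVA}, and we reproduce the argument). Write $m_i=\LT(g_i)$ for the monomial leading terms and let $M\subseteq F_s$ be the syzygy module $\{(h_1,\dots,h_s):\sum_i h_i m_i=0\}$. I would carry out the argument in three steps: reduce to multihomogeneous syzygies, describe their shape explicitly, and then collapse an arbitrary one against a fixed coordinate.

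First I would equip $F_s$ with a $\Z_{\ge 0}^n$-grading by letting $\deg e_i$ be the exponent vector of $m_i$, so that the evaluation map $\pi\colon F_s\to R_n$, $e_i\mapsto m_i$, preserves multidegree. Then $M=\Ker\pi$ is a multigraded submodule of a finitely generated module over the Noetherian ring $R_n$, hence is generated by its multihomogeneous elements. Since each $S_{ij}$ is multihomogeneous of multidegree $\lcm(m_i,m_j)$ and visibly lies in $M$, it then suffices to express an arbitrary multihomogeneous element of $M$ as an $R_n$-linear combination of the $S_{ij}$.

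Next I would observe that a multihomogeneous element of $F_s$ of multidegree $\alpha$ must have the form $\tau=\sum_{i\in T}c_i\,(x^\alpha/m_i)\,e_i$ with $T=\{i:m_i\mid x^\alpha\}$ and $c_i\in\kk$, and that $\pi(\tau)=\bigl(\sum_{i\in T}c_i\bigr)x^\alpha$, so $\tau$ is a syzygy precisely when $\sum_{i\in T}c_i=0$. Then, fixing any $i_0\in T$ and using this linear relation to absorb the $e_{i_0}$ term, I would rewrite $\tau=\sum_{i\in T\setminus\{i_0\}}c_i\bigl((x^\alpha/m_i)e_i-(x^\alpha/m_{i_0})e_{i_0}\bigr)$; since both $m_i$ and $m_{i_0}$ divide $x^\alpha$, so does $\gamma:=\lcm(m_i,m_{i_0})$, and each summand equals $\pm(x^\alpha/\gamma)\,S_{i,i_0}$. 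This exhibits $\tau$ in the $R_n$-span of the $S_{ij}$ and completes the proof that they form a homogeneous generating set for $M$; this is what ``basis'' means here, since the $S_{ij}$ are generally not $R_n$-independent (they satisfy the usual chain relations among triples of monomials).

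I do not expect a genuine obstacle: the whole statement is standard linear bookkeeping. The only points demanding attention are keeping the multigrading bookkeeping consistent in the reduction step, and being careful with the antisymmetry $S_{ij}=-S_{ji}$ and the implicit indexing convention (all ordered pairs versus only $i<j$) when assembling the final linear combination.
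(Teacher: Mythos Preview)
The paper does not prove this lemma at all: it is recorded verbatim from \cite{IVA} with the citation in the lemma header, alongside the two companion lemmas that follow, purely ``for the convenience of the reader.'' Your argument is the standard textbook proof (essentially the one in \cite{IVA}): reduce to multihomogeneous syzygies, note that in a fixed multidegree $\alpha$ a syzygy is a scalar combination $\sum_{i\in T} c_i (x^\alpha/m_i)e_i$ with $\sum c_i=0$, and collapse against a chosen index to rewrite it as an $R_n$-combination of the $S_{i,i_0}$. This is correct, and there is no alternative approach in the paper to compare against.
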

\begin{lemma}[\cite{IVA} Proposition 9 on p. 107]
\label{lem:buchberger}
Let $I=\langle g_1,\ldots, g_s\rangle$. Then 
$G=\{g_1,\ldots, g_s\}$ is a Gr\"obner basis for $I$ if and only if every element of a homogeneous basis for the syzygies on $\LT(G)$ reduces to zero modulo $G$.
\end{lemma}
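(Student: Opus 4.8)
The plan is to verify Buchberger's criterion in the refined form of Lemmas~\ref{lem:spairs} and~\ref{lem:buchberger}: with $G=\{f_1,f_2,\dots\}$ generating $I_\infty$ inside $R_\infty=\kk[x_0,x_1,\dots]$, it suffices to show that every $S$-pair $S_{ij}$ reduces to zero modulo $G$. The first input is an explicit description of the leading terms. Among the degree-two monomials $x_jx_{k-1-j}$ comprising $f_k$, the most balanced one is the largest in reverse lexicographic order, so $\LT(f_{2m+1})=x_m^2$ and $\LT(f_{2m})=x_{m-1}x_m$ (consistent with the Examples above). In particular $\langle\LT(G)\rangle=\langle x_i^2,\ x_ix_{i+1}:i\ge 0\rangle$.

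Next I would observe that two of the monomials $\LT(f_k),\LT(f_l)$ fail to be relatively prime only for the three families of pairs $(f_{2a+1},f_{2a})$, $(f_{2a+1},f_{2a+2})$ and $(f_{2a},f_{2a+2})$ with $a\ge 0$, overlapping in $x_a$ in each case. For all remaining pairs the leading monomials are coprime, so the corresponding $S$-pair reduces to zero by the standard relatively-prime criterion (Buchberger's first criterion), and only the three overlap families need attention.

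The key point is that each of these three $S$-pairs is, up to a scalar, a \emph{partial} instance of the syzygy relation~\eqref{eqn: mu rel}, $\sum_i(n-3i)x_if_{n+1-i}=0$, read with $n=3a-1$, $3a+1$, $3a$ respectively. In each case exactly the two summands $x_if_{n+1-i}$ occurring in the $S$-pair appear in the relation (with the $i=a$ term harmless, vanishing for $n=3a$), so moving the remaining summands to the other side writes $S_{ij}$ as an explicit $R_\infty$-linear combination $\sum c_i\,x_if_{m_i}$ of the $f_k$. All these terms lie in the single bidegree with $x$-degree $3$ and $q$-weight $n$, so to conclude via Lemma~\ref{lem:buchberger} it remains only to check that this is a \emph{standard representation}: $\operatorname{multideg}(x_if_{m_i})\preceq\operatorname{multideg}(S_{ij})$ for every term. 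This is a short reverse-lex comparison; for $(f_{2a+1},f_{2a})$, say, the monomial $x_{a-1}x_a^2$ cancels, $\operatorname{multideg}(S_{ij})=x_{a-1}^2x_{a+1}$, the term $x_{a+1}f_{2a-1}$ attains exactly this leading monomial, and all other $x_if_{m_i}$ are strictly smaller; the other two families are completely analogous.

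I expect the only genuine work to be this bookkeeping in the third step: confirming that \eqref{eqn: mu rel} really furnishes a standard representation and not one whose terms overshoot $\operatorname{multideg}(S_{ij})$ (which would force further reductions). If it turns out unwieldy, one can bypass $S$-pairs entirely: by the first step $\langle\LT(G)\rangle=\langle x_i^2,x_ix_{i+1}\rangle$ is contained in $\operatorname{in}(I_\infty)$, while $R_\infty/\langle x_i^2,x_ix_{i+1}\rangle$ has the admissible monomials as a vector-space basis and hence bigraded Hilbert series $\sum_{p\ge 0}q^{p(p-1)}t^p/\bigl((1-q)\cdots(1-q^p)\bigr)$, which is exactly $H_\infty(q,t)$ by~\eqref{infinity fermionic}; since $R_\infty/\operatorname{in}(I_\infty)$ is then a further quotient with the same Hilbert series, the two ideals coincide and $G$ is a Gröbner basis.
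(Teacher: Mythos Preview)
Your proposal does not address the stated lemma. Lemma~\ref{lem:buchberger} is Buchberger's criterion, quoted from \cite{IVA} as a black box; the paper gives no proof of it and none is expected. What you have actually written is an argument for Proposition~\ref{prop: gb infinity}, that $\{f_k\}_{k\ge 1}$ is a Gr\"obner basis of $I_\infty$.

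Read as a proof of Proposition~\ref{prop: gb infinity}, your approach is the same as the paper's: compute the leading terms, discard $S$-pairs with coprime leading monomials via Lemma~\ref{lem:relprime leading term}, and handle the surviving $S$-pairs using relation~\eqref{eqn: mu rel}. You are in fact more careful than the paper in two respects. First, you correctly identify \emph{three} overlap families, including $(f_{2a},f_{2a+2})$, whose leading terms $x_{a-1}x_a$ and $x_ax_{a+1}$ share $x_a$; the paper treats only consecutive indices $j=i+1$ (and has the even/odd cases interchanged in its description of $\LT(f_i)$). Second, you note that one must check~\eqref{eqn: mu rel} furnishes a standard representation and not merely an algebraic identity. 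Your fallback Hilbert-series argument is a genuinely different and cleaner route: matching $H_\infty$ from~\eqref{infinity fermionic} to the Hilbert series of $R_\infty/\langle x_i^2,\,x_ix_{i+1}\rangle$ shows $\langle\LT(G)\rangle=\operatorname{in}(I_\infty)$ directly, bypassing $S$-pairs altogether.
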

\begin{lemma}[\cite{IVA} Proposition 4 on p.103]
\label{lem:relprime leading term}
$G=\{g_1,\ldots, g_s\}\subset R_n$, and suppose $g_i, g_j\in G$ have relatively prime leading monomials. Then the $S$-polynomial
\begin{equation}
\label{eq:spoly}
S(g_i,g_j):=\phi_n(S_{ij})=\frac{\lcm(\LT(g_i),\LT(g_j))}{\LT(g_i)}g_j-\frac{\lcm(\LT(g_i),\LT(g_j))}{\LT(g_j)}g_j
\end{equation}
reduces to zero modulo $G$.
\end{lemma}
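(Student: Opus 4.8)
The plan is to reproduce the classical argument behind Buchberger's first criterion, specialised to the two polynomials at hand. First I would reduce to the two-element case: it is enough to show that $S(g_i,g_j)$ reduces to zero modulo $\{g_i,g_j\}\subseteq G$, since allowing the extra divisors in $G$ can only help. After rescaling we may assume $g_i,g_j$ are monic; set $f=g_i$, $g=g_j$, $\LT(f)=x^\alpha$, $\LT(g)=x^\beta$, with $\gcd(x^\alpha,x^\beta)=1$. Coprimality gives $\lcm(x^\alpha,x^\beta)=x^{\alpha+\beta}$, so
\[
S(f,g)=x^\beta f-x^\alpha g .
\]
Writing $f=x^\alpha+p$ and $g=x^\beta+q$, where every monomial of the tails $p$ and $q$ is strictly below $x^\alpha$, respectively $x^\beta$ (with $p$ or $q$ possibly zero), the leading terms cancel and $S(f,g)=x^\beta p-x^\alpha q$.

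The next step is the key algebraic identity
\[
S(f,g)=x^\beta p-x^\alpha q=(g-q)p-(f-p)q=pg-qf ,
\]
which writes $S(f,g)$ in the form $Ug-Vf$ with $U=p$, $V=q$. I would then isolate the one place where coprimality is genuinely used, the \emph{no-cancellation} property: for monomials $u\prec x^\alpha$ and $v\prec x^\beta$ one has $u\,x^\beta\neq v\,x^\alpha$ unless $u=v=0$, because $u\,x^\beta=v\,x^\alpha$ would force $x^\alpha\mid u$ (as $\gcd(x^\alpha,x^\beta)=1$), hence $u\succeq x^\alpha$, a contradiction. It follows that for \emph{any} expression $h=Ug-Vf$ in which all monomials of $U$ lie strictly below $x^\alpha$ and all monomials of $V$ strictly below $x^\beta$, the two candidate leading terms $\LT(U)\,x^\beta$ and $\LT(V)\,x^\alpha$ have distinct monomials; hence there is no cancellation, and $\LT(h)$ equals either $\LT(U)\,x^\beta$, which is divisible by $\LT(g)$, or $-\LT(V)\,x^\alpha$, which is divisible by $\LT(f)$.

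Finally I would run the division algorithm on $S(f,g)=pg-qf$ with divisors $(f,g)$ and check that the family of expressions $\{\,Ug-Vf : \text{tails bounded as above}\,\}$ is stable under reduction: a single step subtracts $\LT(U)\,g$, replacing $U$ by $U-\LT(U)$, or subtracts $-\LT(V)\,f$, replacing $V$ by $V-\LT(V)$; in either case the new coefficient still has all its monomials below $x^\alpha$, respectively $x^\beta$, and the multidegree of the current polynomial strictly drops. So no term is ever moved into the remainder, and because the monomial order is a well-ordering the process must terminate, necessarily at $0$. Hence $S(g_i,g_j)$ reduces to zero modulo $\{g_i,g_j\}$, and therefore modulo $G$. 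I do not expect a real obstacle here --- this is a textbook fact \cite{IVA} --- the only point needing care is the verification that the reduction steps keep us inside the ``$Ug-Vf$ with bounded tails'' family, which is exactly what coprimality of the two leading monomials guarantees.
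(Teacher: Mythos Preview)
The paper does not give its own proof of this lemma; it is simply quoted from \cite{IVA}. Your argument is correct and is precisely the classical proof found there: the identity $S(f,g)=pg-qf$, combined with the observation that coprimality of $x^\alpha$ and $x^\beta$ forces $\LT(U)\,x^\beta\neq \LT(V)\,x^\alpha$ whenever the monomials of $U$ lie below $x^\alpha$ and those of $V$ below $x^\beta$, guarantees that every intermediate leading term in the division is divisible by $\LT(f)$ or $\LT(g)$, so nothing is ever sent to the remainder.
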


\begin{proof}[Proof of Proposition \ref{prop: gb infinity}]
Consider $S(f_i,f_j)$. By Lemma \ref{lem:relprime leading term} 
$\gcd(\LT(f_i),\LT(f_j))=1$ implies that $S(f_i,f_j)$ reduces to zero modulo $\{f_k\}_{k=1}^\infty$. Write $i=2q+r$, where $r=0,1$. Then 
$\LT(f_i)=x_q^2$ if $i$ is even and $\LT(f_i)=2x_qx_{q+1}$ if 
$i$ is odd. So the only case we need to consider is $j=i+1$. In this case, we have
 $$\lcm(\LT(f_i),\LT(f_{i+1}))=\begin{cases}
2x_q^2x_{q+1}, & i \text{ even}\\
2x_qx_{q+1}^2, & i \text{ odd}.
\end{cases}$$
Additionally
$$S(f_i,f_{i+1})=\begin{cases}
2x_{q+1}f_i-x_qf_{i+1}, & i \text{ even}\\
x_q f_i-2x_{q+1}f_{i+1}, & i \text{ odd}.
\end{cases}$$
But from \eqref{eqn: mu rel} it follows that these $S$-pairs appear in 
the relations $\phi_n(\mu_{n-1})=0$ for $n\gg 0$. Since $n=\infty$, we 
always have these relations in $I_\infty$. Additionally, moving the $S$-pair to the right-hand side we reduce $S(f_i, f_{i+1})\equiv 0$ 
modulo $\{f_k\}_{k=1}^\infty$. In particular, Lemma \ref{lem:buchberger} 
implies that $\{f_k\}_{k=1}^\infty$ is a Gr\"obner basis for $I_\infty$.
\end{proof}

\end{document}